\newtheorem{mainthm}{Theorem}
\newtheorem{theorem}{Theorem}[section]
\newtheorem*{theorem*}{Theorem}
\newtheorem{corollary}[theorem]{Corollary}
\newtheorem{lemma}[theorem]{Lemma}
\newtheorem{proposition}[theorem]{Proposition}
\newtheorem*{proposition*}{Proposition}
\newtheorem*{conjecture*}{Conjecture}
\theoremstyle{definition}
\newtheorem{definition}[theorem]{Definition}
\newtheorem{remark}[theorem]{Remark}
\numberwithin{equation}{section}
\def\bC {\mathbb{C}}
\def\bN {\mathbb{N}}
\def\bR {\mathbb{R}}
\def\cV {\mathcal{V}}
\def\cY {\mathcal{Y}}
\def\scrL{\mathscr{L}}
\def\grad {{\nabla}}
\def\la {\langle}
\def\ra {\rangle}
\newcommand{\tx}[1]{\mathrm{#1}}
\newcommand{\wt}[1]{\widetilde{#1}}
\newcommand{\bs}[1]{\boldsymbol{#1}}
\newcommand{\conj}[1]{\overline{#1}}
\renewcommand{\div}{\operatorname{div}}
\newcommand{\spn}{\operatorname{span}}
\newcommand{\codim}{\operatorname{codim}}
\renewcommand{\ker}{\operatorname{ker}}
\newcommand{\Id}{\operatorname{Id}}
\newcommand{\eee}{\mathrm e}
\newcommand{\ud}{\mathrm{\,d}}
\newcommand{\vd}{\mathrm{d}}
\newcommand{\dd}[1]{{\frac{\vd}{\vd{#1}}}}
\title{Lyapunov-type characterisation of exponential dichotomies \\
with applications to the heat and Klein-Gordon equations}
\author{Gong Chen}
\address{Dept. of Mathematics, University of Toronto, 40 St. George St., Toronto, ON M5S 2E4, Canada}
\email{gc@math.toronto.edu}
\author{Jacek Jendrej}
\address{CNRS and Universit\'e Paris 13, LAGA, UMR 7539, 99 av J.-B.~Cl\'ement, 93430 Villetaneuse, France}
\email{jendrej@math.univ-paris13.fr}
\begin{document}

\begin{abstract}
We give a sufficient condition for existence of an exponential dichotomy
for a general linear dynamical system (not necessarily invertible) in a Banach space, in discrete or continuous time.
We provide applications to the backward heat equation with a potential varying in time
and to the heat equation with a finite number of slowly moving potentials.
We also consider the Klein-Gordon equation with a finite number of potentials whose centres move at sub-light speed
with small accelerations.
\end{abstract}

\maketitle
%-------------------------INTRODUCTION------------------------------------------%
\section{Introduction}
\label{sec:intro}
\subsection{Exponential dichotomies}
\label{ssec:setting}
Consider a linear dynamical system
\begin{equation}
\label{eq:intro-dyn}
v_{n+1} = B_n v_n,\qquad n \geq 0,\ v_n \in \bC^d,\ B_n \in \bC^{d \times d}.
\end{equation}
In the special case where $B_n = B \in \bC^{d\times d}$ is independent of $n$
(autonomous dynamics), the dynamical behavior of the solutions of \eqref{eq:intro-dyn} can be described
using the Jordan normal form of the matrix $B$.
In particular, if $B$ has no eigenvalues $\lambda \in \bC$ such that $|\lambda| \in [a, b]$,
where $0 < a < b$, then the phase space $\bC^d$ decomposes as a direct sum
\begin{equation*}
\label{eq:intro-split}
\bC^d = X_\tx s \oplus X_\tx u,
\end{equation*}
where $X_\tx s$ and $X_\tx u$ are invariant for \eqref{eq:intro-dyn}
and there exist constants $c, C > 0$ such that
\begin{itemize}
\item if $v_0 \in X_\tx s$, then $|v_n| \leq C a^n |v_0|$ for all $n \geq 0$,
\item if $v_0 \in X_\tx u$, then $|v_n| \geq c\, b^n |v_0|$ for all $n \geq 0$.
\end{itemize}
Such a situation is called an \emph{exponential dichotomy}.
We call $X_\tx s$ the \emph{stable subspace} and $X_\tx u$ the \emph{unstable subspace}.

The purpose of this paper is to construct exponential dichotomies for \eqref{eq:intro-dyn}
and similar systems in the case where $B_n$ changes with $n$.
There are many classical examples exhibiting ``surprising'' behavior of the system \eqref{eq:intro-dyn}.
One such example is given by
\begin{equation*}
B_{2m} = \begin{pmatrix} 0 & -2 \\ 1/8 & 0 \end{pmatrix},
\qquad B_{2m+1} = \begin{pmatrix} 0 & -1/8 \\ 2 & 0 \end{pmatrix}.
\end{equation*}
It is easy to see that the eigenvalues of $B_{2m}$ and $B_{2m+1}$ are $\pm \frac i2$,
and thus have modulus $< 1$.
However, the eigenvalues of $B_1 B_0$ are $-\frac{1}{64}$ and $-4$,
and it turns out that if $x_2 \neq 0$, then the initial data $v_0 = (x_1, x_2)$
yields to exponential growth of the sequence $(v_n)$.
This example shows that the spectra of $B_n$ do not provide enough information to describe
exponential dichotomies of \eqref{eq:intro-dyn}.
Indeed, it is necessary to control how contracting/expanding directions relate to each other
as $n$ changes.

There exists an extensive literature on exponential dichotomies for non-autonomous dynamical systems.
The monograph by Coppel \cite{Coppel} deals with the case of linear ordinary differential equations.
In particular, it provides a necessary and sufficient condition for an exponential dichotomy
in terms of existence of a Lypaunov functional satisfying certain properties.
Related results were obtained by Coppel~\cite{Coppel-2}, Muldowney \cite{Muldowney} and,
in the case of difference equations, Papaschinopoulos \cite{Papaschinopoulos}.

A different approach to exponential dichotomies is based on the \emph{evolution semigroup} introduced by Howland~\cite{Howland},
which means that the non-autonomous system is transformed to an operator semigroup on some space,
whose properties are then studied using spectral methods.
This theory, both in finite and infinite dimension,
is developed in the works of Rau~\cite{Rau}, Latushkin and Montgomery-Smith~\cite{LM},
R\"abiger and Schnaubelt~\cite{RS2}, as well as subsequent works.
One can consult the monograph~\cite{CL-book} for a comprehensive bibliography.

The works \cite{Coppel, Coppel-2, Muldowney, Papaschinopoulos} mentioned above do not seem to directly generalise to infinite dimension.
However, a Lyapunov-type characterisation of exponential dichotomies in infinite-dimensional
Hilbert spaces was obtained by Barreira, Dragi{\v c}evi\'c  and Valls~\cite{BDV17},
using the theory of evolution semigroups, see also~\cite{BDV-book}.

In this paper, we adopt the Lyapunov-type approach and formulate conditions for existence of exponential dichotomies
in terms of existence of \emph{Lyapunov} (or \emph{energy}) \emph{functionals} satisfying certain properties.
Instead of invoking the evolution semigroup theory, we provide an alternative and more direct method,
which we believe can be useful in applications.
One advantage of our method is that in many cases we can easily obtain some supplementary information about the (un)stable spaces,
for example the (co)dimension or an approximate basis.
%It is not clear to us if the evolution semigroup approach provides such information.

%a common characterization of the stability properties of the system \eqref{eq:intro-dyn}
%is given by the \emph{Lyapunov exponents}, in particular the first Lyapunov exponent
%\begin{equation}
%\lambda_1 := \lim_{n \to \infty}\Big(\frac 1n \log\|B_{n-1}B_{n-2}\ldots B_1 B_0\|\Big).
%\end{equation}
In the theory of linear cocycles, exponential dichotomies are related
to the existence of the so-called \emph{Oseledets flag}, see \cite{Viana}.
Our proof of existence of exponential dichotomies
resembles known proofs of the Oseledets Theorem, especially the one given in \cite{DuarteKlein}.

Finally, would like to point out that one of the important properties of exponential dichotomies
is that they often persist under (not necessarily linear) perturbations of the dynamical system.
This general principle is called the \emph{Lyapunov-Perron method}, see for instance \cite{BarreiraValls}.

\subsection{Statement of the results}
\label{ssec:statement}
Because we are interested in applications to dynamics of partial differential equations,
we need to work with an infinite dimensional phase space.
As we are not going to rely on Spectral Theory, we take it to be a real Banach space denoted $X$.
Let $B_n \in \scrL(X)$, $n \in \{0, 1, \ldots\}$ be a sequence of bounded linear operators on $X$.
We consider the dynamical system
\begin{equation}
\label{eq:intro-dynsyst-2}
v_{n+1} = B_n v_n,\qquad v_0 \in X.
\end{equation}
For $n \leq m$ we denote
\begin{equation*}
%\label{eq:Bmn-def}
B(n, n) := \tx{Id}, \quad B(m, n) := B_{m-1} B_{m-2}\ldots  B_{n}.
\end{equation*}
Note that we do not require boundedness of the sequence $(B_n)$ in $\scrL(X)$.
\begin{definition}
\label{def:intro-exp-dich-2}
We say that \eqref{eq:intro-dynsyst-2} has an \emph{exponential dichotomy} with values $a$ and $b$,
$0 < a < b$, if for all $n \geq n_0$ there exists a direct sum decomposition $X = X_\tx s(n) \oplus X_\tx u(n)$
such that $X_\tx s(n)$, $X_\tx u(n)$ and the associated projections
$\pi_\tx s(n): X \to X_\tx s(n)$ and $\pi_\tx u(n): X \to X_\tx u(n)$ have the following properties
for some $C > 0$ and all $n \leq m$:
\begin{enumerate}
\item $B(m, n)\circ\pi_\tx s(n) = \pi_\tx s(m)\circ B(m, n)$ and $B(m, n)\circ\pi_\tx u(n) = \pi_\tx u(m)\circ B(m, n)$,
\item $\|\pi_\tx s(n)\|_{\scrL(X)} + \|\pi_\tx u(n)\|_{\scrL(X)} \leq C$,
\item $B(m, n)\vert_{X_\tx u(m)}: X_\tx u(n) \to X_\tx u(m)$ is invertible,
\item $\|B(m, n)v_n\| \leq C a^{m-n}\|v_n\|$ for all $v_n \in X_\tx s(n)$,
\item $\|B(m, n)^{-1}v_m\| \leq C b^{n-m}\|v_m\|$ for all $v_m \in X_\tx u(m)$.
\end{enumerate}
\end{definition}
\begin{remark}
It is clear that $X_\tx s(n)$ is unique. In general, $X_\tx u(n)$ is not unique.
\end{remark}
%\begin{remark}
%Equivalently, we could require all the conditions above to hold for all $n$ and $m = n+1$.
%\end{remark}
Our sufficient condition for existence of an exponential dichotomy is expressed in terms
of two sequences of (nonlinear) continuous homogeneous functionals $I_n^-, I_n^+ : X \to \bR_+$.
Given $I_n^-$, $I_n^+$ and a number $c > 0$, we define the stable and the unstable cone
\begin{gather}
\cV_\tx s(c, n) := \{v \in X: I_n^+(v) \leq c I_n^-(v)\}, \label{eq:Vs-def} \\
\cV_\tx u(c, n) := \{v \in X: I_n^+(v) \geq c I_n^-(v)\}. \label{eq:Vu-def}
\end{gather}
We find it helpful to keep in mind that if $c$ is small, then $\cV_\tx s(c, n)$ is ``thin'' and $\cV_\tx u(c, n)$ is ``wide''.
Conversely, if $c$ is large, then $\cV_\tx s(c, n)$ is ``wide'' and $\cV_\tx u(c, n)$ is ``thin''.

Firstly, we assume that there exists $c_1 > 0$ (independent of $n$) such that
\begin{equation}
\label{eq:norm-equiv}
c_1 \|v\|_X \leq I_n^-(v) + I_n^+(v) \leq \frac{1}{c_1}\|v\|_X,
\qquad \text{for all }n \geq 0 \text{ and }v \in X.
\end{equation}
Note that, directly from the definitions above, we obtain
\begin{align}
v \in \cV_\tx s(c, n)\ &\Rightarrow\ c_1 I_n^-(v) \leq \|v\|_X \leq \frac{1+c}{c_1}I_n^-, \label{eq:norm-equiv-Im} \\
v \in \cV_\tx u(c, n)\ &\Rightarrow\ c_1 I_n^+(v) \leq \|v\|_X \leq \frac{1+c}{c_1 c}I_n^+, \label{eq:norm-equiv-Ip}
\end{align}
thus on the stable cone the norm is equivalent to $I_n^-$, and on the unstable cone it is equivalent to $I_n^+$.

Secondly, we assume that there exist $c_2 > 0$, $K \in \{0, 1, 2, \ldots\}$
and $\alpha_{k, n}^+ \in X^*$ for $(k, n) \in \{1, \ldots, K\}\times \{0, 1, \ldots\}$ such that
%\begin{equation}
%\label{eq:alphap-indep}
%\Big\|\sum_{k=1}^K b_k \alpha_{k, n}^+\Big\|_{X^*} \geq c_2 \max_{1 \leq k \leq K}|b_k|
%\end{equation}
%and
\begin{equation}
\label{eq:Inp-form}
c_2 \max_{1 \leq k \leq K}|\la \alpha_{k, n}^+, v\ra| \leq I_n^+(v) \leq \frac{1}{c_2}\max_{1 \leq k \leq K}|\la \alpha_{k, n}^+, v\ra|.
\end{equation}

Lastly, 
we assume that there exist $c_3, c_4 > 0$ and $0 < a < b < \infty$ such that
\begin{align}
&\cV_\tx u(c_3, n)\text{ contains a linear space of dimension }K\text{ for all }n, \label{eq:contains-K-2} \\
&c_4 < \frac 13(c_1c_2)^2\frac{c_3}{1+c_3}, \label{eq:intro-c3c4-2} \\
&I_{n+1}^-(B_n v_n) \leq a I_{n}^-(v_n)\quad\text{if }B_n v_n \in \cV_\tx s(c_3, n+1), \label{eq:intro-ineq-Im-20} \\
&I_{n+1}^+(B_n v_n) \geq b I_{n}^+(v_n)\quad\text{if }v_n \in \cV_\tx u(c_4, n). \label{eq:intro-ineq-Ip-20}
\end{align}

\begin{mainthm}
\label{thm:main-dis-2}
Under assumptions \eqref{eq:norm-equiv}, \eqref{eq:Inp-form}--\eqref{eq:intro-ineq-Ip-20},
the system \eqref{eq:intro-dynsyst-2} has an exponential dichotomy with values $a$ and $b$.
For all $n \geq 0$ the stable subspace $X_\tx s(n)$ is contained in $\cV_\tx s(c_4, n)$ and has codimension $K$.
\end{mainthm}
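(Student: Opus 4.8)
The plan is to build the splitting by hand, in the spirit of the Oseledets-type arguments mentioned in the introduction, and to verify the five conditions of Definition~\ref{def:intro-exp-dich-2} in turn. First I would unpack the hypotheses: by \eqref{eq:Inp-form} the bounded operator $A_n:=(\langle\alpha^+_{k,n},\cdot\rangle)_{k=1}^K\colon X\to\bR^K$ satisfies $I^+_n\simeq\|A_nv\|$, so on the $K$-dimensional space $W_n\subseteq\cV_\tx u(c_3,n)$ furnished by \eqref{eq:contains-K-2} the map $A_n$ is injective (if $A_nw=0$ then $I^+_n(w)=0$, hence $I^-_n(w)=0$ because $w\in\cV_\tx u(c_3,n)$, hence $w=0$ by \eqref{eq:norm-equiv}), thus an isomorphism onto $\bR^K$; therefore $A_n$ is surjective, $N_n:=\ker A_n$ has codimension $K$, and $X=N_n\oplus W_n$. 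From \eqref{eq:norm-equiv}--\eqref{eq:Inp-form} I would record the refined estimates $\|v\|\simeq\|A_nv\|$ on each cone $\cV_\tx u(c,n)$, $\|A_nv\|\le\tfrac{c}{c_1c_2}\|v\|$ on each cone $\cV_\tx s(c,n)$, and $\|A_nv\|\le\tfrac1{c_1c_2}\|v\|$ for every $v$. Note \eqref{eq:intro-c3c4-2} forces $c_4<c_3$, so $\cV_\tx u(c_3,n)\subseteq\cV_\tx u(c_4,n)$, $\cV_\tx s(c_4,n)\subseteq\cV_\tx s(c_3,n)$, and $\cV_\tx u(c_4,n)\cup\cV_\tx s(c_3,n)=X$.

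The dynamical core is forward invariance of the unstable cones: for any $c\ge c_4$, if $v\in\cV_\tx u(c,n)$ then $B_nv\in\cV_\tx u(c,n+1)$. I would argue by contradiction: if not, $I^+_{n+1}(B_nv)<cI^-_{n+1}(B_nv)\le c_3I^-_{n+1}(B_nv)$, so $B_nv\in\cV_\tx s(c_3,n+1)$ and \eqref{eq:intro-ineq-Im-20} gives $I^-_{n+1}(B_nv)\le aI^-_n(v)$; but $v\in\cV_\tx u(c,n)\subseteq\cV_\tx u(c_4,n)$, so \eqref{eq:intro-ineq-Ip-20} gives $I^+_{n+1}(B_nv)\ge bI^+_n(v)\ge bcI^-_n(v)$, and chaining these forces $b\le a$ unless $I^-_n(v)=0$ (a case handled directly via \eqref{eq:norm-equiv}), a contradiction. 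Iterating, $B(m,n)$ maps $\cV_\tx u(c,n)$ into $\cV_\tx u(c,m)$; applying \eqref{eq:intro-ineq-Ip-20} repeatedly and then \eqref{eq:norm-equiv-Ip} yields, for every $v\in\cV_\tx u(c_4,n)$, the expansion bound $\|B(m,n)v\|\ge\tfrac{c_1^2c_4}{1+c_4}b^{m-n}\|v\|$; in particular $B(m,n)$ is injective on $\cV_\tx u(c_4,n)$, and orbits entering $\cV_\tx u(c_4,\cdot)^\circ$ grow at rate $b$.

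Next I would set $X_\tx s(n):=\{v\in X\colon B(m,n)v\in\cV_\tx s(c_4,m)\text{ for all }m\ge n\}$. Since $\cV_\tx u(c_4,\cdot)^\circ$ is forward invariant and orbits inside it grow at rate $b>a$, a rate-$a$-decaying orbit never enters it; hence $X_\tx s(n)$ coincides with the set of $v$ whose orbit decays at rate $a$, which makes it a closed linear subspace, with $X_\tx s(n)\subseteq\cV_\tx s(c_4,n)$ and the invariance property (1). Iterating \eqref{eq:intro-ineq-Im-20} along such orbits and using \eqref{eq:norm-equiv-Im} gives $\|B(m,n)v\|\le\tfrac{1+c_4}{c_1^2}a^{m-n}\|v\|$, which is (4). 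For the unstable subspace I would take $X_\tx u(n):=B(n,0)W_0$: by forward invariance of the \emph{thin} cone, $X_\tx u(n)\subseteq\cV_\tx u(c_3,n)$; by injectivity it is $K$-dimensional, and $B_n$ restricts to a bijection $X_\tx u(n)\to X_\tx u(n+1)$, giving (3); rearranging the expansion bound gives $\|B(m,n)^{-1}v\|\le\tfrac{1+c_4}{c_1^2c_4}b^{n-m}\|v\|$ on $X_\tx u(m)$, which is (5). The expansion-versus-decay contradiction also shows $X_\tx s(n)\cap\cV_\tx u(c_4,n)=\{0\}$, hence $X_\tx s(n)\cap X_\tx u(n)=0$ and $\codim X_\tx s(n)\ge K$.

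The remaining --- and, I expect, hardest --- point is completeness $X_\tx s(n)+W_n=X$, which with $\dim X_\tx u(n)=K$ and $X_\tx s(n)\cap X_\tx u(n)=0$ then gives $X=X_\tx s(n)\oplus X_\tx u(n)$; the difficulty is that $\|B_n\|_{\scrL(X)}$ is uncontrolled. Given $v\in X$, for each $m\ge n$ there is a unique $w_m\in W_n$ with $A_mB(m,n)(v-w_m)=0$, solved from an affine equation using that $A_n|_{W_n}$ and $M(m,n):=A_mB(m,n)(A_n|_{W_n})^{-1}$ are isomorphisms (the latter by the expansion bound). For this $w_m$ the orbit of $v-w_m$ cannot leave $\cV_\tx s(c_4,\cdot)$ before time $m$: were it in $\cV_\tx u(c_4,j_0)^\circ$ for some $j_0<m$, forward invariance and strict expansion would force $B(m,n)(v-w_m)\neq0$ while lying in $\cV_\tx u(c_4,m)\cap\cV_\tx s(c_4,m)=\{I^+_m=c_4I^-_m\}$, hence $I^+_m=I^-_m=0$ and $B(m,n)(v-w_m)=0$, a contradiction. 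So the sets $G_m:=\{u\in v+W_n\colon B(j,n)u\in\cV_\tx s(c_4,j)\text{ for }n\le j\le m\}$ are nonempty, closed and nested, and the single constraint $u\in\cV_\tx s(c_4,n)$ already bounds them uniformly: writing $u=v+w'$ and combining $\|A_nu\|\le\tfrac{c_4}{c_1c_2}\|u\|$, $\|A_nv\|\le\tfrac1{c_1c_2}\|v\|$, and $\|w'\|\lesssim\|A_nw'\|$ (valid since $W_n\subseteq\cV_\tx u(c_3,n)$), one absorbs $\|w'\|$ precisely because \eqref{eq:intro-c3c4-2} gives $c_4<\tfrac{(c_1c_2)^2c_3}{1+c_3}$, obtaining $\|w'\|\le C'\|v\|$ with $C'$ independent of $m$. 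Hence $\bigcap_{m\ge n}G_m\neq\varnothing$ (nested nonempty closed bounded subsets of the finite-dimensional $v+W_n$), and any point of the intersection furnishes the decomposition. Granting this, invariance of both summands gives (1); and (2) follows from the $c_3$--$c_4$ gap: for $v=v_\tx s+v_\tx u$ with $v_\tx s\in\cV_\tx s(c_4,n)$, $v_\tx u\in\cV_\tx u(c_3,n)$, the same norm estimates plus $\|v_\tx u\|\simeq\|A_nv_\tx u\|$ let one absorb $\|v_\tx u\|$ --- again using $c_4<\tfrac{(c_1c_2)^2c_3}{1+c_3}$ --- bounding $\|\pi_\tx u(n)\|$, whence $\|\pi_\tx s(n)\|\le1+\|\pi_\tx u(n)\|$, uniformly in $n$.
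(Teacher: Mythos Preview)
Your proof is correct and follows essentially the same route as the paper's: invariance of the cones under $B_n$ (the paper's Lemma~\ref{lem:V-inv-2}), the definition of $X_\tx s(n)$ as the set of orbits that never leave $\cV_\tx s(c_4,\cdot)$ together with its characterisation as the rate-$a$ decaying vectors (the paper's Lemma~\ref{lem:Xs-charact-2} and Corollary~\ref{cor:Xs-2}), the definition $X_\tx u(n):=B(n,0)W_0$, the transversality estimate via the $c_3$--$c_4$ gap, and the completeness step via nested nonempty closed sets in the finite-dimensional slice $v+W_n$.

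Two small remarks. First, your cone-invariance claim ``for any $c\ge c_4$'' is stated too generously: the step $cI^-_{n+1}(B_nv)\le c_3I^-_{n+1}(B_nv)$ needs $c\le c_3$, so the argument only gives invariance for $c\in[c_4,c_3]$, exactly as in the paper; this is harmless since you only ever use $c=c_4$ and $c=c_3$. Second, in the completeness step the paper says one should show the diameters of the nested sets $\Pi\cap B(n',n)^{-1}(\cV_\tx s(c_3,n'))$ tend to zero ``similarly as in the proof of Theorem~\ref{thm:main-dis}''. Your observation that in the \emph{forward} case the affine slice is $K$-dimensional, so boundedness alone (which you obtain from the single constraint $u\in\cV_\tx s(c_4,n)$ and the absorption permitted by \eqref{eq:intro-c3c4-2}) already yields a nonempty intersection by compactness, is a genuine simplification over what the paper sketches; the diameter argument is really only forced in the backward case, where the analogous slice is infinite-dimensional.
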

\begin{remark}
Note that if \eqref{eq:intro-ineq-Im-20} holds, then it also holds with $c_3$ replaced by any smaller number.
Similarly, if \eqref{eq:intro-ineq-Ip-20} holds, then it also holds with $c_4$ replaced by any bigger number.
In other words, \eqref{eq:intro-ineq-Im-20} and \eqref{eq:intro-ineq-Ip-20} imply
\begin{align}
I_{n+1}^-(B_n v_n) \leq a I_{n}^-(v_n)\quad&\text{if }B_n v_n \in \cV_\tx s(c, n+1), \label{eq:intro-ineq-Im-2} \\
I_{n+1}^+(B_n v_n) \geq b I_{n}^+(v_n)\quad&\text{if }v_n \in \cV_\tx u(c, n) \label{eq:intro-ineq-Ip-2}
\end{align}
for all $c \in [c_4, c_3]$.
\end{remark}
\begin{remark}
One can show that \eqref{eq:contains-K-2} always holds if
$\alpha_{k, n}^+$ are uniformly linearly independent and $c_3$ small enough, see Proposition~\ref{prop:Kspace-2}.

The condition \eqref{eq:intro-c3c4-2} that we impose on $c_4$ is far from being optimal.
In the applications, it only matters that $c_4$ is required to be smaller than $c_3$
multiplied by some small positive constant depending on $c_1, c_2, K$.
\end{remark}
\begin{remark}
 The appropriate energy functionals $I_n^-$ and $I_n^+$ are constructed
in each particular case using the specific structure of a given problem, and in particular
the natural energy functionals associated with it.
Intuitively, we would like $I_n^-$ to control the ``shrinking'' in the stable direction.
Similarly, $I_n^+$ has to control the ``expanding'' in the unstable directions.
Note that the assumption of the stable/unstable component being significant
is ``before the step'' in the direction of expansion.
The condition \eqref{eq:Inp-form} means that there are only finitely many expansion directions,
which is true for any application we could think of.
If $c_4$ is small, then $X_\tx s(n) \subset \cV_\tx s(c_4, n)$ gives a precise information about
the subspace $X_\tx s(n)$.
\end{remark}

To complete our analysis, we will prove that existence of an exponential dichotomy implies existence of
energy functionals $I_n^-$ and $I_n^+$ satisfying conditions which are apparently stronger than
the conditions listed above (thus, in reality, equivalent).
\begin{proposition}
\label{prop:cond-nec}
If \eqref{eq:intro-dynsyst-2} has an exponential dichotomy with values $a < b$, then there exist semi-norms $\wt I_n^-$, $\wt I_n^+$
satisfying \eqref{eq:norm-equiv} and
\begin{gather*}
X_\tx s(n) = \{v_n: \wt I_n^+(v_n) = 0\}, \\
X_\tx u(n) = \{v_n: \wt I_n^-(v_n) = 0\}, \\
\wt I_n^-(B_n v_n) \leq a \wt I_n^-(v_n)\quad\text{for all }v_n \in X, \\
\wt I_n^+(B_n v_n) \geq b \wt I_n^+(v_n)\quad\text{for all }v_n \in X.
\end{gather*}
If $X_\tx s(n)$ has finite codimension $K$, then $\wt I_n^+$ satisfies \eqref{eq:Inp-form}.
\end{proposition}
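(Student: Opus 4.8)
The plan is to read the energy functionals directly off the dichotomy, as weighted suprema of norms of iterates restricted to the stable, resp.\ unstable, part. For $n \geq n_0$ and $v \in X$ I would define
\[
\wt I_n^-(v) := \sup_{m \geq n} a^{n-m}\,\|B(m, n)\,\pi_\tx s(n) v\|, \qquad
\wt I_n^+(v) := \sup_{n_0 \leq m \leq n} b^{n-m}\,\|B(n, m)^{-1}\,\pi_\tx u(n) v\|,
\]
where $B(n,m)^{-1}$ is the inverse of the isomorphism $B(n,m)\colon X_\tx u(m) \to X_\tx u(n)$ furnished by property~(3), applied to $\pi_\tx u(n) v \in X_\tx u(n)$. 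Each is a supremum of seminorms in $v$, hence a seminorm once finiteness is checked. By property~(4) one has $a^{n-m}\|B(m,n)\pi_\tx s(n)v\| \leq C\,\|\pi_\tx s(n)v\|$ for every $m \geq n$, and by property~(5) one has $b^{n-m}\|B(n,m)^{-1}\pi_\tx u(n)v\| \leq C\,\|\pi_\tx u(n)v\|$ for every $n_0 \leq m \leq n$; together with property~(2) this bounds both suprema by $C^2\|v\|$, so that they are finite and each defines a Lipschitz seminorm. Note the two ranges of $m$ are forced: reversing either one would require iterating $B_n$ forward on the unstable part or backward on the stable part, neither of which is controlled since $(B_n)$ need not be bounded.

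Next I would check the asserted properties. Taking $m = n$ in each definition gives $\wt I_n^-(v) \geq \|\pi_\tx s(n)v\|$ and $\wt I_n^+(v) \geq \|\pi_\tx u(n)v\|$, hence
\[
\wt I_n^-(v) + \wt I_n^+(v) \geq \|\pi_\tx s(n)v\| + \|\pi_\tx u(n)v\| \geq \|v\|,
\]
which with the upper bound above yields \eqref{eq:norm-equiv}. The same two inequalities show $\wt I_n^+(v) = 0$ exactly when $\pi_\tx u(n)v = 0$, i.e.\ when $v \in \ker\pi_\tx u(n) = X_\tx s(n)$, and symmetrically $\{\wt I_n^- = 0\} = X_\tx u(n)$. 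For the monotonicity I would use the intertwining relations in~(1) together with the cocycle identity: since $\pi_\tx s(n+1)B_n = B_n\pi_\tx s(n)$ and $B(m,n+1)B_n = B(m,n)$ for $m \geq n+1$, one has $B(m,n+1)\pi_\tx s(n+1)B_n v = B(m,n)\pi_\tx s(n)v$, hence
\[
\wt I_{n+1}^-(B_n v) = a \sup_{m \geq n+1} a^{n-m}\|B(m,n)\pi_\tx s(n)v\| \leq a\,\wt I_n^-(v).
\]
Dually, working inside the unstable subspaces and using $B(n+1,m) = B_n\,B(n,m)$ on $X_\tx u(m)$, one gets $B(n+1,m)^{-1}\pi_\tx u(n+1)B_n v = B(n,m)^{-1}\pi_\tx u(n)v$ for $n_0 \leq m \leq n$, so that, separating off the (nonnegative) term $m = n+1$,
\[
\wt I_{n+1}^+(B_n v) \geq b \sup_{n_0 \leq m \leq n} b^{n-m}\|B(n,m)^{-1}\pi_\tx u(n)v\| = b\,\wt I_n^+(v).
\]

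Finally, suppose $K := \codim X_\tx s(n)$ is finite. Since $\wt I_n^+$ is a continuous seminorm with $\wt I_n^+(v) = \wt I_n^+(\pi_\tx u(n)v)$ and kernel the closed subspace $X_\tx s(n)$, it descends to a genuine norm on the $K$-dimensional quotient $X/X_\tx s(n)$. The one delicate point is that the constant in \eqref{eq:Inp-form} must be uniform in $n$, whereas an arbitrary basis of $X_\tx u(n)$ would give a constant depending on its conditioning. To avoid this I would take an Auerbach basis of $X/X_\tx s(n)$ in the descended norm: its dual functionals $u_1^*,\ldots,u_K^* \in (X/X_\tx s(n))^*$ satisfy $\max_k|u_k^*(\bar v)| \leq \|\bar v\| \leq K\max_k|u_k^*(\bar v)|$ with constants depending only on $K$. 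Pulling the $u_k^*$ back through the quotient map produces $\alpha_{k,n}^+ \in X^*$ with $\max_k|\la\alpha_{k,n}^+,v\ra| \leq \wt I_n^+(v) \leq K\max_k|\la\alpha_{k,n}^+,v\ra|$, i.e.\ \eqref{eq:Inp-form} with $c_2 = 1/K$ (the case $K=0$ being vacuous). The main obstacle, to the extent there is one, is exactly this uniformity: the Auerbach basis (equivalently John's ellipsoid) on the quotient is what keeps $c_2$ independent of $n$; the rest is bookkeeping with the cocycle and intertwining identities.
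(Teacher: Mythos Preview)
Your proof is correct and follows essentially the same approach as the paper: the same weighted-supremum definitions of $\wt I_n^\pm$, the same verification of \eqref{eq:norm-equiv} and of the monotonicity via the cocycle and intertwining identities, and the same idea of passing to the $K$-dimensional space for the final step. The only cosmetic difference is that the paper works directly on $X_\tx u(n)$ and invokes John's ellipsoid before extending the functionals by zero on $X_\tx s(n)$, whereas you pass to the quotient $X/X_\tx s(n)$ and use an Auerbach basis; these are equivalent (and you note as much).
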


We will also consider the case of a \emph{backward} dynamical system
\begin{equation}
\label{eq:dynsyst}
v_{n-1} = A_n v_n, \qquad v_0 \in X.
\end{equation}
We do not require $A_n$ to be invertible or the sequence $(A_n)$ to be bounded in $\scrL(X)$.
For $n \leq m$ we denote
\begin{equation*}
%\label{eq:Anm-def}
A(n, n) := \tx{Id}, \quad A(n, m) := A_{n+1} A_{n+2}\ldots A_{m}.
\end{equation*}
\begin{definition}
\label{def:exp-dich}
We say that \eqref{eq:intro-dynsyst-2} has a (uniform) \emph{exponential dichotomy} with values $a$ and $b$,
$0 < a < b$, if for all $n \geq n_0$ there exists a direct sum decomposition $X = X_\tx s(n) \oplus X_\tx u(n)$
such that $X_\tx s(n)$, $X_\tx u(n)$ and the associated projections
$\pi_\tx s(n): X \to X_\tx s(n)$ and $\pi_\tx u(n): X \to X_\tx u(n)$ have the following properties for all $n \leq m$:
\begin{enumerate}
\item $A(n, m)\circ\pi_\tx s(m) = \pi_\tx s(n)\circ A(n, m)$ and $A(n, m)\circ\pi_\tx u(m) = \pi_\tx u(n)\circ A(n, m)$,
\item there exists a constant $C$ such that $\|\pi_\tx s(n)\|_{\scrL(X)} + \|\pi_\tx u(n)\|_{\scrL(X)} \leq C$,
\item $A(n, m)\vert_{X_\tx s(m)}: X_\tx s(m) \to X_\tx s(n)$ is invertible,
\item there exists a constant $C$ such that $\|A(n, m)^{-1}v_n\| \leq C a^{m-n}\|v_n\|$ for all $v_n \in X_\tx s(n)$,
\item there exists a constant $C$ such that $\|A(n, m)v_m\| \leq C b^{n-m}\|v_m\|$ for all $v_m \in X_\tx u(m)$.
\end{enumerate}
\end{definition}

We make the following assumptions about the functionals $I_n^\pm$.
We assume that there exist $c_2 > 0$, $K \in \{0, 1, 2, \ldots\}$
and $\alpha_{k, n}^- \in X^*$ for $(k, n) \in \{1, \ldots, K\}\times \{0, 1, \ldots\}$ such that
\begin{equation}
\label{eq:Inm-form}
c_2 \max_{1 \leq k \leq K}|\la \alpha_{k, n}^-, v\ra| \leq I_n^-(v) \leq \frac{1}{c_2}\max_{1 \leq k \leq K}|\la \alpha_{k, n}^-, v\ra|.
\end{equation}

We define the stable and unstable cone by the same formulas \eqref{eq:Vs-def} and \eqref{eq:Vu-def}.
Instead of \eqref{eq:contains-K-2}--\eqref{eq:intro-ineq-Ip-20}, we assume
\begin{align}
&\cV_\tx s(c_3, n)\text{ contains a linear space of dimension }K\text{ for all }n, \label{eq:contains-K} \\
&c_4 > 3(c_1c_2)^{-2}(c_3+1), \label{eq:intro-c3c4} \\
&I_{n}^-(v_n) \leq a I_{n-1}^-(A_n v_n)\quad\text{if }v_n \in \cV_\tx s(c_4, n), \label{eq:intro-ineq-Im0} \\
&I_{n}^+(v_n) \geq b I_{n-1}^+(A_n v_n)\quad\text{if }A_n v_n \in \cV_\tx u(c_3, n-1). \label{eq:intro-ineq-Ip0}
\end{align}
\begin{mainthm}
\label{thm:main-dis}
Under assumptions \eqref{eq:norm-equiv}, \eqref{eq:Inm-form}--\eqref{eq:intro-ineq-Ip0},
the system \eqref{eq:dynsyst} has an exponential dichotomy with values $a$ and $b$.
For all $n \geq 0$ the stable subspace $X_\tx s(n)$ is contained in $\cV_\tx s(c_3, n)$ and has dimension $K$.
\end{mainthm}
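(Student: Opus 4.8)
The plan is to reduce Theorem~\ref{thm:main-dis} to the already-stated Theorem~\ref{thm:main-dis-2} by a time-reversal trick, so that essentially no new work is required beyond bookkeeping. The backward system $v_{n-1} = A_n v_n$ on the index set $\{0, 1, 2, \ldots\}$, restricted to any finite window $\{0, 1, \ldots, N\}$, becomes a forward system after substituting $m := N - n$; more conceptually, since the defining inequalities \eqref{eq:intro-ineq-Im0}--\eqref{eq:intro-ineq-Ip0} are ``local'' (they relate step $n$ to step $n-1$), I would first observe that the roles of $I_n^-$ and $I_n^+$ get swapped, and the roles of $\cV_\tx s$ and $\cV_\tx u$ get swapped, when we run time backwards. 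Concretely, set $\wh B_n := A_{n+1}$ acting in the forward direction on relabeled indices, let $\wh I_n^+ := I_n^-$ and $\wh I_n^- := I_n^+$, so that $\wh\cV_\tx s(c, n) = \{v : \wh I_n^+(v) \le c \wh I_n^-(v)\} = \{v : I_n^-(v) \le c I_n^+(v)\} = \cV_\tx u(1/c, n)$ and similarly $\wh\cV_\tx u(c,n) = \cV_\tx s(1/c, n)$. One then checks that \eqref{eq:intro-ineq-Im0} is exactly the hypothesis \eqref{eq:intro-ineq-Ip-20} for the hatted system and \eqref{eq:intro-ineq-Ip0} is exactly \eqref{eq:intro-ineq-Im-20}, with $a$ and $b$ unchanged (note the asymmetry: backward the \emph{stable} bound involves $a$ on the ``$-$'' functional, which matches the forward \emph{unstable} bound on $\wh I^+$), and that \eqref{eq:Inm-form} matches \eqref{eq:Inp-form}, \eqref{eq:contains-K} matches \eqref{eq:contains-K-2}, and \eqref{eq:intro-c3c4} matches \eqref{eq:intro-c3c4-2} after taking reciprocals of the cone parameters (the reciprocal turns the upper bound $c_4 < \frac13(c_1c_2)^2 \frac{c_3}{1+c_3}$ into the lower bound $c_4 > 3(c_1c_2)^{-2}(c_3+1)$, which is why the two statements have the inequalities pointing opposite ways).

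Having set up this dictionary, I would apply Theorem~\ref{thm:main-dis-2} to the hatted forward system to obtain an exponential dichotomy $X = \wh X_\tx s(n) \oplus \wh X_\tx u(n)$ with values $a$ and $b$, with $\wh X_\tx s(n)$ of codimension $K$ and contained in $\wh\cV_\tx s(\wh c_4, n)$. Then I would translate back: define $X_\tx s(n) := \wh X_\tx u(n)$ and $X_\tx u(n) := \wh X_\tx s(n)$, so that $X_\tx s(n)$ has dimension $K$, and $X_\tx s(n) = \wh X_\tx u(n) \subset \wh\cV_\tx u(\wh c_3, n) = \cV_\tx s(1/\wh c_3, n) = \cV_\tx s(c_3, n)$ with the correspondence $c_3 = 1/\wh c_3$ chosen appropriately. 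The invariance property (1) of Definition~\ref{def:exp-dich} for $A(n,m)$ follows from invariance for $\wh B(\cdot,\cdot) = A(n,m)$ (up to relabeling); the invertibility in (3) — that $A(n,m)|_{X_\tx s(m)}: X_\tx s(m) \to X_\tx s(n)$ is an isomorphism — is precisely condition (3) of the forward dichotomy applied to $\wh X_\tx u$; and the exponential bounds (4) and (5) are the bounds (5) and (4) of the forward dichotomy, respectively, since contraction-in-forward-time on $\wh X_\tx s$ is contraction-in-backward-time on $X_\tx u$ and vice versa. The uniform boundedness of projections (2) transfers verbatim.

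The one genuinely delicate point — and the place I would be most careful — is making the index relabeling and the finite-window-versus-infinite-ray issue fully rigorous. The forward Main Theorem is stated for a one-sided sequence $(B_n)_{n \ge 0}$, and the backward system also lives on $(A_n)_{n\ge 0}$, but time reversal would naively send $n \ge 0$ to $n \le 0$. There are two clean ways to handle this: either (i) observe that Theorem~\ref{thm:main-dis-2}'s proof only uses the forward ray and a version for the sequence shifted/reflected is immediate by the same argument, and state a one-line lemma to that effect; or (ii) carry out the reduction on each truncation $\{0, \ldots, N\}$, apply the finite-window forward result, and pass to the limit $N \to \infty$ using the uniform constant $C$ (which is independent of $N$) together with a diagonal/compactness argument to extract consistent limiting subspaces $X_\tx s(n), X_\tx u(n)$. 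I expect approach (i) to be cleanest if the forward proof is genuinely index-agnostic, which it appears to be since all hypotheses are indexed uniformly in $n$; I would go with that, spending a short paragraph spelling out the substitution $\wh B_n := A_{n+1}$, $\wh I^\pm_n := I^\mp_n$ and verifying each of the four inequality hypotheses line by line, and then simply quoting Theorem~\ref{thm:main-dis-2}. Everything else is a mechanical transcription through the stable$\leftrightarrow$unstable dictionary.
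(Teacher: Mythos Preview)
Your time-reversal strategy runs into a genuine obstruction that is more serious than the ``delicate point'' you flag at the end. The backward system $v_{n-1} = A_n v_n$ lives on the half-line $n \in \{0,1,2,\ldots\}$, and after reversing time (via $m = -n$, or $m = N-n$ on a window) you obtain a forward system on the \emph{other} half-line $\{0,-1,-2,\ldots\}$. Theorem~\ref{thm:main-dis-2} is stated and proved for a forward system on $\{0,1,2,\ldots\}$: its proof defines $X_\tx s(c,n) = \bigcap_{n'>n} B(n',n)^{-1}(\cV_\tx s(c,n'))$, which genuinely requires $n' \to +\infty$. On your reversed half-line there is no $+\infty$ in the forward direction, so the proof does not transfer; it is not index-agnostic in the sense you need. (Your alternate prescription ``$\wh B_n := A_{n+1}$ on $n \ge 0$'' defines a different dynamical system altogether: $\wh B(m,0) = A_m\cdots A_1$, whereas the backward evolution is $A(0,m) = A_1\cdots A_m$, and these disagree unless the $A_n$ commute.) A smaller slip: under the correct dictionary one finds $\wh a = 1/b$ and $\wh b = 1/a$, not ``$a$ and $b$ unchanged''; the final dichotomy values come out right only after a second inversion when translating back.

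The structural reason the reduction cannot be a shortcut is this. In Theorem~\ref{thm:main-dis-2} the subspace $X_\tx s$ is canonical (determined by decay as $n \to +\infty$) while $X_\tx u$ is a choice at $n=0$ propagated forward. In Theorem~\ref{thm:main-dis} it is again $X_\tx s$ that is canonical (determined by behaviour as $n \to +\infty$, which here is the \emph{past} of the dynamics) and $X_\tx u$ that is a choice at $n=0$. Your dictionary sends backward $X_\tx s$ to forward $\wh X_\tx u$, so you are trying to recover the canonical object from the non-canonical one; this forces a limit as $N\to\infty$. Your option (ii) is the right instinct, but showing that the finite-window choices converge is exactly the nested-intersection/shrinking-diameters argument the paper carries out directly (the definition \eqref{eq:Xs-def} and the proof that $\Pi \cap \conj{A(n,n')\cV_\tx s(c_3,n')}$ has diameter $\to 0$). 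So the reduction saves no work. The paper instead proves Theorem~\ref{thm:main-dis} directly via cone invariance (Lemma~\ref{lem:V-inv}), a characterisation of the stable space (Lemma~\ref{lem:Xs-charact} and Corollary~\ref{cor:Xs}), and then the transversality bound \eqref{eq:v+w-lbound} together with the completeness argument; Theorem~\ref{thm:main-dis-2} is proved afterwards by a parallel direct argument.
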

\begin{remark}
As before, \eqref{eq:intro-ineq-Im0} and \eqref{eq:intro-ineq-Ip0} imply
\begin{align}
I_{n}^-(v_n) \leq a I_{n-1}^-(A_n v_n)\quad&\text{if }v_n \in \cV_\tx s(c, n), \label{eq:intro-ineq-Im} \\
I_{n}^+(v_n) \geq b I_{n-1}^+(A_n v_n)\quad&\text{if }A_n v_n \in \cV_\tx u(c, n-1). \label{eq:intro-ineq-Ip}
\end{align}
for all $c \in [c_3, c_4]$.
\end{remark}
\begin{remark}
Note that if $c_3, c_4$ are large, then $\cV_\tx s(c_4, n)$ is a wide cone and $\cV_\tx u(c_3, n-1)$ is a thin cone.
We will prove in Proposition~\ref{prop:Kspace} that \eqref{eq:contains-K} holds if $\alpha_{k, n}^-$ are uniformly linearly independent
and $c_3$ is large enough.
However, it is often possible to use a much smaller value of $c_3$, which gives more information about $X_\tx s(n)$.
Again, \eqref{eq:intro-c3c4} is not optimal.
\end{remark}
One can state and prove analogous results for continuous dynamical systems, see Sections~\ref{ssec:projections-cont}
and \ref{ssec:projections-cont-2}.

\subsection{Applications in PDE}
\label{ssec:applications}
As a typical application, we can think of a heat equation with a time-dependent
potential in the case of a forward dynamical system and of a backward heat equation with a time-dependent potential
in the case of a backward dynamical system.
We explain in Section~\ref{sec:examples} how to apply our result to the heat equation in the following two situations:
\begin{itemize}
\item the potential is almost constant on short time intervals in a suitable $L^p$ norm,
\item a potential of a fixed shape (or a finite number of such potentials) are moving in space with a small velocity.
\end{itemize}
Our result in the first case is quite similar to previous results of Schnaubelt~\cite{Schnaubelt-1, Schnaubelt-2}.

In Section~\ref{sec:wave}, we apply the general results to the Klein-Gordon equation with moving potentials.
To our knowledge, this is a first non-trivial example of exponential dichotomies for a wave-type equation.

Our main motivation is the study of multi-solitons for nonlinear models. In this situation,
the potential is given by linearising the equation around an approximate solution.
The hyperbolic structure of the flow around this approximate solution can often by obtained
by the Lyapunov-Perron method if the existence of exponential dichotomy for the linear model is proved.
We believe that this approach could lead to an alternative construction of multi-solitons
in the weak interaction regime, see for instance \cite{Martel05, Combet, CoMu2014, CoMa-multi, MRT-water}.

\subsection{Acknowledgments}
We would like to thank Wilhelm Schlag for helpful discussions.
Part of this work was completed when the second author was visiting the University of Chicago Mathematics Department
and the University of Toronto Mathematics Department.
We would like to thank the Beijing International Center for Mathematical Research, where this work was finished.

%\subsection{Notation}

\section{Constructions of exponential dichotomies}
\label{sec:general}
\subsection{Discrete backward dynamical systems}
\label{ssec:projections-dis}
In this section we prove Theorem~\ref{thm:main-dis}.
We use the so-called \emph{method of invariant cones}, cf. \cite[Section 4.4.2]{Viana}.
%Recall that we assume
%\begin{align}
%I_{n}^-(v_n) \leq a I_{n-1}^-(A_n v_n)\quad&\text{if }v_n \in \cV_\tx s(c_2, n), \label{eq:ineq-Im} \\
%I_{n}^+(v_n) \geq b I_{n-1}^+(A_n v_n)\quad&\text{if }A_n v_n \in \cV_\tx u(c_2, n-1). \label{eq:ineq-Ip}
%\end{align}
\begin{lemma}
\label{lem:V-inv}
For all $n \geq 1$ and $c \in [c_3, c_4]$ there is
\begin{gather*}
A_n^{-1} \cV_\tx u(c, n-1)\subset \cV_\tx u(c, n), \\
\conj{A_n\cV_\tx s(c, n)} \subset \cV_\tx s(c, n-1).
\end{gather*}
\end{lemma}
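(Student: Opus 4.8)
\emph{Proof plan.} The plan is to derive both inclusions directly from the one-step contraction and expansion estimates, which — by the remark following Theorem~\ref{thm:main-dis} — are available in the form \eqref{eq:intro-ineq-Im}--\eqref{eq:intro-ineq-Ip} for every $c \in [c_3, c_4]$, together with homogeneity and continuity of the functionals $I_n^\pm$. The heuristic is that under $A_n$ the quantity $I^-$ is contracted by the factor $a$ on the stable cone while $I^+$ is expanded by the factor $b$ on the unstable cone; since $a < b$, the ratio $I^+/I^-$ is always pushed towards the appropriate cone, so $\cV_\tx u$ is invariant ``going backward in the index'' and $\cV_\tx s$ is invariant ``going forward''. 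No spectral information is used.

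For the first inclusion I would fix $c \in [c_3, c_4]$, take $v_n \in X$ with $A_n v_n \in \cV_\tx u(c, n-1)$, and show $I_n^+(v_n) \ge c\, I_n^-(v_n)$. If $v_n \notin \cV_\tx s(c, n)$ this is immediate from \eqref{eq:Vs-def}, so I may assume $v_n \in \cV_\tx s(c, n)$. Then \eqref{eq:intro-ineq-Im} gives $I_n^-(v_n) \le a\, I_{n-1}^-(A_n v_n)$, while \eqref{eq:intro-ineq-Ip} (applicable since $A_n v_n \in \cV_\tx u(c, n-1)$) together with the defining inequality of $\cV_\tx u(c, n-1)$ yields
\[
I_n^+(v_n) \;\ge\; b\, I_{n-1}^+(A_n v_n) \;\ge\; bc\, I_{n-1}^-(A_n v_n) \;\ge\; \frac{bc}{a}\, I_n^-(v_n) \;\ge\; c\, I_n^-(v_n),
\]
the last step using $a < b$. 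Hence $v_n \in \cV_\tx u(c, n)$, which is the claim.

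For the second inclusion I would take $v_n \in \cV_\tx s(c, n)$ and show first that $A_n v_n \in \cV_\tx s(c, n-1)$. If $A_n v_n \notin \cV_\tx u(c, n-1)$ this holds by \eqref{eq:Vu-def}, so assume $A_n v_n \in \cV_\tx u(c, n-1)$. Combining \eqref{eq:intro-ineq-Ip}, the inclusion $v_n \in \cV_\tx s(c, n)$ (i.e.\ \eqref{eq:Vs-def}), and \eqref{eq:intro-ineq-Im},
\[
b\, I_{n-1}^+(A_n v_n) \;\le\; I_n^+(v_n) \;\le\; c\, I_n^-(v_n) \;\le\; ca\, I_{n-1}^-(A_n v_n),
\]
so $I_{n-1}^+(A_n v_n) \le \tfrac{ca}{b}\, I_{n-1}^-(A_n v_n) \le c\, I_{n-1}^-(A_n v_n)$, i.e.\ $A_n v_n \in \cV_\tx s(c, n-1)$. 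This proves $A_n \cV_\tx s(c, n) \subset \cV_\tx s(c, n-1)$. Since $I_{n-1}^\pm$ are continuous, $\cV_\tx s(c, n-1) = \{v : I_{n-1}^+(v) - c\, I_{n-1}^-(v) \le 0\}$ is closed, and taking closures gives $\conj{A_n \cV_\tx s(c, n)} \subset \cV_\tx s(c, n-1)$.

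The only place needing a word of care is that the displayed chains degenerate when $I_n^-(v_n)$ or $I_{n-1}^-(A_n v_n)$ vanishes; in that situation the required cone membership is read off directly from $I^\pm \ge 0$, so the dichotomy ``$v_n \in \cV_\tx s$ or not'' is genuinely exhaustive. I do not anticipate a real obstacle in this lemma: it is the soft, purely algebraic step of the invariant-cone method. The substantial part of Theorem~\ref{thm:main-dis} comes afterwards — using these nested invariant cones to extract the projections $\pi_\tx s(n)$, $\pi_\tx u(n)$ satisfying the quantitative bounds of Definition~\ref{def:exp-dich}, and pinning down the dimension of $X_\tx s(n)$ via \eqref{eq:contains-K} — where the quantitative hypotheses such as \eqref{eq:intro-c3c4} will be used.
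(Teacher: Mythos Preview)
Your argument is correct and follows essentially the same approach as the paper: both proofs chain the one-step estimates \eqref{eq:intro-ineq-Im}--\eqref{eq:intro-ineq-Ip} together with the cone definitions, the paper phrasing it as a contradiction and you phrasing it directly. Your explicit remark that $\cV_\tx s(c,n-1)$ is closed (so the closure passes through) is a detail the paper leaves implicit, and your handling of the degenerate case $I^-=0$ is likewise a small bonus.
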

\begin{proof}
In order to prove the first inclusion, suppose $v_n \in X$ is such that $A_n v_n \in \cV_\tx u(c, n-1)$
and $v_n \notin \cV_\tx u(c, n)$, thus $v_n \in \cV_\tx s(c, n)$.
From \eqref{eq:intro-ineq-Im} and \eqref{eq:intro-ineq-Ip} we get
\begin{equation*}
c I_{n-1}^-(A_n v_n) \geq \frac ca I_n^-(v_n) > \frac{1}{a}I_n^+(v_n) \geq \frac{b}{a}I_{n-1}^+(A_n v_n)
\geq I_{n-1}^+(A_n v_n),
\end{equation*}
which contradicts $A_n v_n \in \cV_\tx u(c, n-1)$.

In order to prove the second inclusion, suppose $v_n \in X$ is such that $v_n \in \cV_\tx s(c, n)$
and $A_n v_n \notin \cV_\tx s(c, n-1)$, thus $A_n v_n \in \cV_\tx u(c, n-1)$.
From \eqref{eq:intro-ineq-Im} and \eqref{eq:intro-ineq-Ip} we get
\begin{equation*}
c I_{n-1}^-(A_n v_n) \geq \frac{c}a I_n^-(v_n) \geq \frac{1}{a}I_n^+(v_n) \geq \frac{b}{a}I_{n-1}^+(A_n v_n)
\geq I_{n-1}^+(A_n v_n),
\end{equation*}
which contradicts $A_n v_n \notin \cV_\tx s(c, n-1)$.
\end{proof}
For $c \in [c_3, c_4]$ we define the stable subspace by
\begin{equation}
\label{eq:Xs-def}
X_\tx s(c, n) := \bigcap_{n' > n}\conj{A(n, n')\cV_\tx s(c, n')}.
\end{equation}
%Moreover, let $X_\tx u(n_0)$ be any maximal closed linear space contained in $\cV_\tx u(c, n_0)$
%and let $X_\tx u(n)$ be defined inductively for $n \geq n_0$ by $X_\tx u(n) := A_n^{-1}X_\tx u(n-1)$.
%We would like to show that $X = X_\tx s(n) \oplus X_\tx u(n)$.
Clearly, $X_\tx s(c, n)$ is a closed set, but it is not even obvious if $X_\tx s(c, n)$ is a linear subspace of $X$.
\begin{lemma}
\label{lem:Xs-charact}
For all $n \geq 0$ and $c \in [c_3, c_4]$ the following conditions are equivalent:
\begin{enumerate}[(i)]
\item
\label{it:Xs-charact-i}
$w \in X_\tx s(c, n)$,
\item
\label{it:Xs-charact-ii}
there exists $C \geq 0$ such that for all $\delta \in (0, 1)$ and $n' \geq n$
there is a solution $(v_n, \ldots, v_{n'})$ of \eqref{eq:dynsyst} satisfying $\|v_n - w\|_X \leq \delta$
and $\|v_{n'}\|_X \leq C a^{n'-n}$,
\item
\label{it:Xs-charact-iii}
there exist $C \geq 0$ and $d < b$ such that for all $\delta \in (0, 1)$ and $n' \geq n$
there is a solution $(v_n, \ldots, v_{n'})$ of \eqref{eq:dynsyst} satisfying $\|v_n - w\|_X \leq \delta$
and $\|v_{n'}\|_X \leq C d^{n'-n}$.
\end{enumerate}
\end{lemma}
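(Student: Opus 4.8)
The plan is to prove the cycle of implications (i) $\Rightarrow$ (ii) $\Rightarrow$ (iii) $\Rightarrow$ (i). The implication (ii) $\Rightarrow$ (iii) is immediate: take $d = a$, which is admissible since $a < b$. Throughout I use two bookkeeping facts. First, a solution $(v_n, \ldots, v_{n'})$ of \eqref{eq:dynsyst} is determined by its top value through $v_m = A(m, n') v_{n'}$, $n \leq m \leq n'$. Second, Lemma~\ref{lem:V-inv} propagates cone membership along the flow: if $v_{n'} \in \cV_\tx s(c, n')$ then $v_m \in \cV_\tx s(c, m)$ for every $n \leq m \leq n'$ (apply $\conj{A_j\cV_\tx s(c, j)} \subset \cV_\tx s(c, j-1)$ repeatedly), and dually, if $v_m \in \cV_\tx u(c, m)$ then $v_{m'} \in \cV_\tx u(c, m')$ for every $m \leq m' \leq n'$ (apply $A_{j}^{-1}\cV_\tx u(c, j-1) \subset \cV_\tx u(c, j)$ repeatedly). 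I also use that $X = \cV_\tx s(c, m) \cup \cV_\tx u(c, m)$, so that $v_m \notin \cV_\tx s(c, m)$ forces $v_m \in \cV_\tx u(c, m)$.

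For (i) $\Rightarrow$ (ii), let $w \in X_\tx s(c, n)$ and fix $\delta \in (0, 1)$ and $n' \geq n$. If $n' = n$, the one-term solution $v_n = w$ works. If $n' > n$, the definition \eqref{eq:Xs-def} provides $v_{n'} \in \cV_\tx s(c, n')$ with $\|A(n, n') v_{n'} - w\|_X \leq \delta$; set $v_m := A(m, n') v_{n'}$, so $\|v_n - w\|_X \leq \delta$ and, by the propagation fact, $v_m \in \cV_\tx s(c, m)$ for all $m$. Iterating \eqref{eq:intro-ineq-Im} down the orbit gives $I_{n'}^-(v_{n'}) \leq a^{n'-n} I_n^-(v_n)$, and then \eqref{eq:norm-equiv-Im} together with \eqref{eq:norm-equiv} yields
\[
\|v_{n'}\|_X \leq \tfrac{1+c}{c_1} I_{n'}^-(v_{n'}) \leq \tfrac{1+c}{c_1} a^{n'-n} I_n^-(v_n) \leq \tfrac{1+c}{c_1^2} a^{n'-n} \|v_n\|_X \leq \tfrac{(1+c)(\|w\|_X + 1)}{c_1^2}\, a^{n'-n},
\]
so (ii) holds with $C := \max\{\|w\|_X,\ (1+c)(\|w\|_X + 1)/c_1^2\}$, which depends on neither $\delta$ nor $n'$.

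The substantive implication is (iii) $\Rightarrow$ (i). If $w = 0$ there is nothing to prove, since $0 \in \cV_\tx s(c, m)$ for all $m$ and thus $0 \in X_\tx s(c, n)$; assume $w \neq 0$, and let $C$, $d < b$ be as in (iii). Fix $m > n$ and $\varepsilon > 0$, and set $\delta := \min\{\varepsilon, \tfrac12\|w\|_X, \tfrac12\}$. For each $n' \geq m$, choose a solution $(v_n, \ldots, v_{n'})$ as in (iii), so $\|v_n - w\|_X \leq \delta$ and $\|v_{n'}\|_X \leq C d^{n'-n}$. I claim $v_m \in \cV_\tx s(c, m)$ provided $n'$ is large enough (how large depending only on $m$). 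Suppose not; then $v_m \in \cV_\tx u(c, m)$, hence $v_{m'} \in \cV_\tx u(c, m')$ for all $m \leq m' \leq n'$, and iterating \eqref{eq:intro-ineq-Ip} up the orbit gives $I_{n'}^+(v_{n'}) \geq b^{n'-m} I_m^+(v_m)$. Since $v_n = A(n, m) v_m$ and $\|v_n\|_X \geq \|w\|_X - \delta \geq \tfrac12\|w\|_X > 0$, we have $\|v_m\|_X \geq \|w\|_X / (2\|A(n, m)\|_{\scrL(X)})$, so combining \eqref{eq:norm-equiv}, \eqref{eq:norm-equiv-Ip} and the growth estimate,
\[
C d^{n'-n} \geq \|v_{n'}\|_X \geq c_1 I_{n'}^+(v_{n'}) \geq c_1 b^{n'-m} I_m^+(v_m) \geq \frac{c_1^2\, c\, \|w\|_X}{2(1+c)\,\|A(n, m)\|_{\scrL(X)}}\, b^{n'-m}.
\]
Dividing by $b^{n'-m}$, the left side equals $C d^{m-n}(d/b)^{n'-m} \to 0$ as $n' \to \infty$ (because $d < b$), while the right side is a fixed positive number — a contradiction once $n'$ is large. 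Hence for such $n'$ we have $v_m \in \cV_\tx s(c, m)$ and $\|A(n, m) v_m - w\|_X = \|v_n - w\|_X \leq \delta \leq \varepsilon$. As $\varepsilon$ was arbitrary, $w \in \conj{A(n, m)\cV_\tx s(c, m)}$; intersecting over $m > n$ gives $w \in X_\tx s(c, n)$.

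The main obstacle is this growth-versus-decay dichotomy inside (iii) $\Rightarrow$ (i): ruling out $v_m \in \cV_\tx u(c, m)$ requires converting forward-invariance of the unstable cone plus the expansion bound \eqref{eq:intro-ineq-Ip} into genuine exponential growth at rate $b$, which overrides the prescribed rate $d < b$. The only delicate point is the lower bound $\|v_m\|_X \gtrsim \|w\|_X$, which relies on boundedness of the finite composition $A(n, m)$ — available because each $A_k \in \scrL(X)$ — even though $(A_k)$ is not assumed uniformly bounded.
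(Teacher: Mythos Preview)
Your proof is correct and follows essentially the same strategy as the paper: both use Lemma~\ref{lem:V-inv} to propagate cone membership, the estimate chain through \eqref{eq:norm-equiv-Im} for (i)$\Rightarrow$(ii), and the growth-versus-decay dichotomy (exponential expansion at rate $b$ along the unstable cone contradicting the assumed rate $d<b$) for (iii)$\Rightarrow$(i). The only cosmetic difference is that the paper argues (iii)$\Rightarrow$(i) by global contrapositive---assuming $w\notin\overline{A(n,m)\cV_\tx s(c,m)}$ for some $m$, fixing the resulting $\delta$, and then pushing $\|v_m\|$ to zero---whereas you prove membership in each $\overline{A(n,m)\cV_\tx s(c,m)}$ directly by a local contradiction; the underlying estimate, including the use of boundedness of the finite product $A(n,m)$, is identical.
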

\begin{proof}
\ref{it:Xs-charact-i} $\Rightarrow$ \ref{it:Xs-charact-ii}.
Let $w \in X_{\tx s}(c, n)$, $\delta \in (0, 1)$, $n' > n$.
By the definition of $X_{\tx s}(c, n)$,
there exists $v_{n'} \in V_{\tx s}(c, n')$ such that $\|A(n, n')v_{n'} - w\|_{X} \leq \delta$.
We will show that there exists $C \geq 0$, independent of $\delta$ and $n'$, such that
$\|v_{n'}\|_{X}\leq C a^{n'-n}$.

Consider $v_m := A(m, n')v_{n'}$ for $m \in \{n, \ldots, n'-1\}$.
Then, by Lemma~\eqref{lem:V-inv}, $v_m \in V_{\tx s}(c, m)$ for all $m \in \{n, \ldots, n'-1\}$.
Since $v_m \in V_{\tx s}(c, m)$ for $n \leq m < n'$, \eqref{eq:intro-ineq-Im}
implies $I_m^-(v_m) \leq a I_{m-1}^-(A_m v_m)$ for $n < m < n'$,
which yields $I_{n'}^-(v_{n'}) \leq a^{n'-n}I_n^-(v_n)$.
From \eqref{eq:norm-equiv-Im} we obtain
\begin{equation*}
\|v_{n'}\|_X \leq \frac{1+c}{c_1}I_{n'}^-(v_{n'}) \leq \frac{1+c}{c_1}a^{n'-n}I_n^-(v_n) \leq \frac{1+c}{c_1^2}a^{n'-n}\|v_n\|,
\end{equation*}
which proves \ref{it:Xs-charact-ii} with $C = \frac{(1+c)(1+ \|w\|_X)}{c_1^2}$.

\ref{it:Xs-charact-ii} $\Rightarrow$ \ref{it:Xs-charact-iii} follows from $a < b$.

\ref{it:Xs-charact-iii} $\Rightarrow$ \ref{it:Xs-charact-i}.
Let $v_n \in X$ be such that \ref{it:Xs-charact-iii} holds.
Suppose that $w \notin X_\tx s(c, n)$. This means that there exists $m > n$ such that
\begin{equation*}
\label{eq:notinVs}
w \notin \conj{A(n, m) \cV_\tx s(c, m)}.
\end{equation*}
In other words, there exists $\delta > 0$ such that if $(v_n, \ldots, v_m)$ is a solution of \eqref{eq:dynsyst}
such that $\|v_n - w\|\leq \delta$, then $v_m \in X \setminus \cV_\tx s(c, m) \subset \cV_\tx u(c, m)$.
We fix this $\delta$ (without loss of generality assume $\delta < \frac 12 \|w\|$)
and let $(v_n, \ldots, v_{n'})$ be a solution of \eqref{eq:dynsyst}
having the properties described in \ref{it:Xs-charact-iii}, with $n'$ large.
Since $v_m \in \cV_\tx u(c, m)$, Lemma~\ref{lem:V-inv} yields $v_k \in \cV_\tx u(c, k)$
for $k \in \{m, \ldots, n'\}$. Thus
\begin{equation*}
\frac{C}{c_1}d^{n'-n}\geq \frac{1}{c_1}\|v_{n'}\| \geq I_{n'}^+(v_{n'}) \geq b^{n' - m} I_m^+(v_m)
\geq\frac{c_1c}{1+c}b^{n'-m}\|v_m\|.
\end{equation*}
Since we assume $d < b$, by taking $n'$ sufficiently large we can ensure that
\begin{equation*}
\|v_m\| \leq \frac{\delta}{\|A_{n+1}\|\ldots\|A_{m}\|}.
\end{equation*}
This implies $\|v_n\| \leq \delta$, thus $\|w\| \leq \|v_n\| + \delta \leq 2\delta$,
contradicting the choice of $\delta$.
\end{proof}
\begin{remark}
In the proof of the last lemma, assumptions \eqref{eq:Inm-form}--\eqref{eq:intro-c3c4} were not used.
\end{remark}
\begin{corollary}
\label{cor:Xs}
For all $n \geq 0$, the set $X_\tx s(c, n) = X_\tx s(n)$ does not depend on $c \in [c_3, c_4]$.
It is a closed linear subspace of $X$
and $A_n: X_\tx s(n) \to X_\tx s(n-1)$ is a linear embedding.
\end{corollary}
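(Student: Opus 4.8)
The plan is to extract everything in Corollary~\ref{cor:Xs} from the characterisation in Lemma~\ref{lem:Xs-charact}, plus one observation about the structure of cones. Condition~\ref{it:Xs-charact-iii} of the lemma is manifestly independent of $c$ (it does not mention $c$ at all), while condition~\ref{it:Xs-charact-i} is the statement $w \in X_\tx s(c, n)$. Since the lemma asserts \ref{it:Xs-charact-i} $\Leftrightarrow$ \ref{it:Xs-charact-iii} for every $c \in [c_3, c_4]$, the set $X_\tx s(c, n)$ cannot depend on $c$; this gives the first sentence, and we write $X_\tx s(n)$ for the common value.

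Next I would prove that $X_\tx s(n)$ is a closed linear subspace. Closedness is immediate from \eqref{eq:Xs-def}: each $\conj{A(n, n')\cV_\tx s(c, n')}$ is closed by definition (it is a closure), and an intersection of closed sets is closed. For linearity, I would use condition~\ref{it:Xs-charact-ii} (or \ref{it:Xs-charact-iii}), which characterises membership in terms of existence of approximate backward orbits with at most $a^{n'-n}$ (resp.\ $d^{n'-n}$) growth. The point is that this condition is closed under scalar multiplication trivially (rescale the orbit and the constant $C$), and under addition: if $w, w' \in X_\tx s(n)$ with constants $C, C'$, then given $\delta$ and $n'$ we pick approximate orbits $(v_m)$ for $w$ with $\|v_n - w\| \leq \delta/2$ and $(v_m')$ for $w'$ with $\|v_n' - w'\| \leq \delta/2$; by linearity of each $A_m$, the sequence $v_m + v_m'$ is again a solution of \eqref{eq:dynsyst}, it satisfies $\|(v_n + v_n') - (w + w')\| \leq \delta$, and $\|v_{n'} + v_{n'}'\| \leq (C + C')a^{n'-n}$. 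Hence $w + w' \in X_\tx s(n)$ with constant $C + C'$. (A small wrinkle: the constant produced in the proof of \ref{it:Xs-charact-i} $\Rightarrow$ \ref{it:Xs-charact-ii} depends on $\|w\|_X$, but since we only need the \emph{existence} of some finite $C$, uniform over $\delta$ and $n'$, this causes no trouble.)

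Finally I would show $A_n \colon X_\tx s(n) \to X_\tx s(n-1)$ is a well-defined linear embedding. Linearity is inherited from $A_n \in \scrL(X)$. For well-definedness, if $w \in X_\tx s(c, n)$ then from the second inclusion of Lemma~\ref{lem:V-inv}, $\conj{A_n \cV_\tx s(c, n)} \subset \cV_\tx s(c, n-1)$, and more generally $\conj{A(n-1, n')\big(A_n w\big)}$ lies in the appropriate iterated closure; chasing \eqref{eq:Xs-def} gives $A_n w \in X_\tx s(c, n-1) = X_\tx s(n-1)$. Injectivity is the one step requiring genuine content: I would argue that $A_n$ restricted to $X_\tx s(n)$ is injective because $X_\tx s(n)$ sits inside the cone $\cV_\tx s(c, n)$ on which $I_n^-$ controls the norm via \eqref{eq:norm-equiv-Im}, while \eqref{eq:intro-ineq-Im} forces $I_n^-(w) \leq a\, I_{n-1}^-(A_n w)$; so if $A_n w = 0$ then $I_{n-1}^-(A_n w) = 0$, hence $I_n^-(w) = 0$, hence $\|w\|_X = 0$ by \eqref{eq:norm-equiv-Im}. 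The only thing to check is that $X_\tx s(n) \subset \cV_\tx s(c, n)$ for some $c \in [c_3, c_4]$, which follows directly from \eqref{eq:Xs-def} with $n' = n+1$ — actually one must be slightly careful since \eqref{eq:Xs-def} only gives membership in the \emph{closure} $\conj{A(n, n')\cV_\tx s(c, n')}$; taking $n' = n$ in \eqref{eq:Xs-def} (where $A(n,n) = \tx{Id}$ and $\cV_\tx s(c, n)$ is already closed by continuity of $I_n^\pm$) yields $X_\tx s(n) \subset \cV_\tx s(c, n)$ as needed. The main obstacle, then, is just this injectivity argument and the bookkeeping of closures; everything else is a direct consequence of the preceding lemma.
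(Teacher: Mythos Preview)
Your proof is correct and follows essentially the same route as the paper's: independence of $c$ and linearity from the characterisation in Lemma~\ref{lem:Xs-charact}, closedness from the definition~\eqref{eq:Xs-def}, and injectivity of $A_n$ from \eqref{eq:intro-ineq-Im} together with \eqref{eq:norm-equiv-Im}.

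One minor slip to flag. In the last paragraph you try to take $n' = n$ in \eqref{eq:Xs-def} to conclude $X_\tx s(n) \subset \cV_\tx s(c, n)$, but the intersection in \eqref{eq:Xs-def} runs over $n' > n$ only, so that move is not available. Your first instinct ($n' = n+1$) was already the right one and your worry about the closure is unfounded: Lemma~\ref{lem:V-inv} states $\conj{A_{n+1}\cV_\tx s(c, n+1)} \subset \cV_\tx s(c, n)$ with the closure already on the left, so membership in the closure is exactly what you need and no further bookkeeping is required.
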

\begin{proof}
Condition~\ref{it:Xs-charact-ii} in Lemma~\ref{lem:Xs-charact} defines a linear subspace
independent of $c$ so, by Lemma~\ref{lem:Xs-charact}, $X_\tx s(n)$ is a linear subspace of $X$.
We see directly from \eqref{eq:Xs-def} that it is closed and that $A_n w \in X_\tx s(n-1)$
whenever $w \in X_\tx s(n)$. The fact that $A_n$ is an embedding on $X_\tx s(n)$ follows
from \eqref{eq:intro-ineq-Im} and the fact that $\|\cdot\|$ is comparable to $I_n^-$ on $\cV_\tx s(c, n)$.
\end{proof}
\begin{proof}[Proof of Theorem~\ref{thm:main-dis}]
We set $X_\tx u(0) := \bigcap_{k=1}^K \ker \alpha_{k, 0}^-$ and we define inductively
\begin{equation*}
X_\tx u(n) := A_n^{-1}(X_\tx u(n-1)), \quad\text{for }n > 0.
\end{equation*}
By the definition of $\cV_\tx u(c_3, n)$ and \eqref{eq:Inm-form}, we have
$\bigcap_{k=1}^K \ker \alpha_{k, 0}^- \subset \cV_\tx u(c_4, 0)$, thus Lemma~\ref{lem:V-inv} yields
$X_{\tx u}(n) \subset \cV_\tx u(c_4, n)$, for all $n\geq 0$.
For all $n \geq 0$, $X_\tx u(n)$ is a linear subspace of $X$ of codimension at most $K$
(as we will see later, in fact equal to $K$).

Note that the choice of $X_\tx u(0)$ is not canonical,
in fact we could take as $X_\tx u(0)$ any subspace of codimension $\leq K$ contained in $\cV_\tx u(c_4, 0)$.

%\red{Minor corrections needed.}
%We can find $c_5 > 0$ (depending on $K, c_1, \ldots, c_4$ such that $X_\tx s(n) \subset \{v: I_n^-(v) \geq 2c_5\|v\|\}$
%and $X_\tx u(n) \subset \{w: I_n^-(w) \leq c_4^2c_5\|w\|\}$.
We will find a constant $c_5 > 0$ depending on $c_1, \ldots, c_4$
such that if $v \in X_\tx s(n)$ and $w \in X_\tx u(n)$,
then
\begin{equation}
\label{eq:v+w-lbound}
\|v + w\| \geq c_5 \|v\|.
\end{equation}
If $\|w\|\geq \frac 32 \|v\|$, then \eqref{eq:v+w-lbound} follows from the triangle inequality.
Assume $\|w\| \leq \frac 32 \|v\|$.
Since $X_\tx s(n) \subset \cV_\tx s(c_3, n)$, \eqref{eq:norm-equiv-Im} yields $I_n^-(v) \geq \frac{c_1}{1+c_3}\|v\|_X$,
thus by \eqref{eq:Inm-form} there is $k_0 \in \{1, \ldots, K\}$ such that
\begin{equation}
\label{eq:alphav-big}
|\la \alpha_{k_0}^-, v\ra| \geq \frac{c_1c_2}{1+c_3}\|v\|.
\end{equation}
Since $X_\tx u(n) \subset \cV_\tx u(c_4, n)$, we have $c_4 I_n^-(w) \leq I_n^+(w)$,
so \eqref{eq:norm-equiv} yields $(1+c_4)I_n^-(w) \leq \frac{1}{c_1}\|w\| \leq \frac{3}{2c_1}\|v\|$.
Invoking again \eqref{eq:Inm-form} we obtain
\begin{equation}
\label{eq:alphaw-small}
|\la \alpha_{k_0}^-, w\ra| \leq \frac{3}{2c_1c_2(1+c_4)}\|v\|.
\end{equation}
From \eqref{eq:alphav-big} and \eqref{eq:alphaw-small} we get
\begin{equation*}
\|v + w\| \geq c_1 I_n^-(v+w) \geq c_1c_2|\la \alpha_{k_0}^-, v+w\ra| \geq c_1c_2\Big(\frac{c_1c_2}{1+c_3} - \frac{3}{2c_1c_2(1+c_4)}\Big)\|v\|.
\end{equation*}
Assumption \eqref{eq:intro-c3c4} implies that the constant in front of $\|v\|$ is $> 0$, so we have proved \eqref{eq:v+w-lbound}.

Next, we prove that $X = X_\tx s(n) \oplus X_\tx u(n)$.
Bound \eqref{eq:v+w-lbound} directly yields $X_\tx s(n) \cap X_{\tx u}(n) = \{0\}$.

Let $v \in X$. Let $\Pi := v + X_\tx u(n)$. By assumption \eqref{eq:contains-K},
for any $n' \geq n$ the cone $\cV_\tx s(c_3, n')$ contains a linear space $\wt X_\tx s(n')$ of dimension $K$.
We see that $A(n, n')\vert_{\wt X_\tx s(n')}$ is one-to-one. Indeed, since $\ker A(n, n') \subset \cV_\tx u(c_4, n')$
and $\wt X_\tx s(n') \subset \cV_\tx s(c_3, n')$, this follows from $\cV_\tx s(c_3, n') \cap \cV_\tx u(c_4, n') = \{0\}$.

Since $X_\tx u(n) \cap A(n, n')\wt X_\tx s(n') = \{0\}$
and $\codim(X_\tx u(n)) \leq K$, we actually have
\begin{equation}
\label{eq:Xu-codim}
\codim(X_\tx u(n)) = K
\end{equation}
and the intersection $\Pi \cap A(n, n')\wt X_\tx s(n')$ is non-empty.
In particular, $\Pi \cap \conj{A(n, n')\cV_\tx s(c_3, n')}$ is a nested family of closed non-empty sets.
It suffices to show that their diameters tend to 0 as $n' \to \infty$.

Let $w_1, w_2 \in \Pi \cap \conj{A(n, n')\cV_\tx s(c_3, n')}$. Then $w := w_1 - w_2 \in X_\tx u(n)$.
Let $\delta > 0$. There exist $w_1', w_2' \in \cV_\tx s(c_3, n')$
such that $\|A(n, n')w_k' - w_k\| \leq \delta$ for $k \in \{1, 2\}$.
Using Lemma~\ref{lem:V-inv}, \eqref{eq:intro-ineq-Im} and \eqref{eq:norm-equiv}, we have
\begin{equation*}
I_{n'}^-(w_k') \leq a^{n'-n}I_n^-(A(n, n')w_k')\leq \frac{1}{c_1}a^{n'-n}\|A(n, n')w_k'\| \leq \frac{1}{c_1}a^{n'-n}
(\|w_k\| + \delta).
\end{equation*}
Since $w_k' \in \cV_\tx s(c_3, n')$, \eqref{eq:norm-equiv-Im} yields
\begin{equation}
\label{eq:wk'petit}
\|w_k'\| \leq \frac{1+c_3}{c_1}I_{n'}^-(w_k') \leq \frac{1+c_3}{c_1^2}a^{n'-n}(\|w_k\| + \delta).
\end{equation}

Let $w' := w_1' - w_2'$ and $\wt w := A(n, n')w'$. We have $\|\wt w - w\| \leq 2\delta$.
There are two cases: either $\wt w \in \cV_\tx u(c_3, n)$, or not.

In the first case, we also have $w' \in \cV_\tx u(c_3, n')$, so we obtain
\begin{equation*}
\|w'\| \geq c_1 I_{n'}^+(w') \geq c_1 b^{n'-n}I_n^+(\wt w) \geq \frac{c_1^2 c_3}{1+c_3}b^{n'-n}\|\wt w\|.
\end{equation*}
Combining this with \eqref{eq:wk'petit} and $\|w'\| \leq \|w_1'\| + \|w_2'\|$ we get
\begin{equation*}
\|\wt w\| \leq \frac{(1+c_3)^2}{c_1^4 c_3}\Big(\frac ab\Big)^{n'-n}(\|w_1\| + \|w_2\| + 2\delta),
\end{equation*}
which implies $\|w\| \leq \|\wt w\| + 2\delta \leq 4\delta$ by taking $n'$ large enough (depending on $\delta$).
Since $\delta > 0$ is arbitrary, this finishes the proof.

In the second case, since $X_\tx u(n) \subset \cV_\tx u(c_4, n)$, we have
\begin{equation*}
I_n^+(w) \geq c_4 I_n^-(w),\quad I_n^+(\wt w) < c_3 I_n^-(\wt w), \quad \|w - \wt w\| \leq 2\delta.
\end{equation*}
By continuity of $I_n^+, I_n^-$, since $\delta > 0$ is arbitrary, this yields $I_n^-(w) = 0$.
Again by continuity, we have $I_n^-(\wt w)$ as small as we wish, thus also $\|\wt w\|$ as small as we wish.
Hence $\|w\|$ is small as well.

This finishes the proof that $X = X_\tx s(n) \oplus X_\tx u(n)$ for all $n$.
The fact that
\begin{equation}
\label{eq:Xs-dim}
\dim X_\tx s(n) = K
\end{equation}
follows from \eqref{eq:Xu-codim}.
We are ready to verify all the requirements in Definition~\ref{def:exp-dich}.

Invertibility of $A(n, m)|_{X_\tx s(m)}: X_\tx s(m) \to X_\tx s(n)$ follows from Corollary~\ref{cor:Xs}
and \eqref{eq:Xs-dim}.

Uniform boundedness of the projections $\pi_\tx s(n)$, $\pi_\tx u(n)$ follows from \eqref{eq:v+w-lbound}.

The fact that the projections commute with $A(n, m)$ follows from $X_\tx s(n) = A_n^{-1}(X_\tx s(n-1))$
and $X_\tx u(n) = A_n^{-1}(X_\tx u(n-1))$.

If $v_n \in X_\tx s(n)$, then for any $m \geq n$ there exists $v_m = A(n, m)^{-1}v_n$.
Moreover, $v_m \in \cV_\tx s(c_3, m)$ for $m \geq n$, so \eqref{eq:intro-ineq-Im0} yields $I_m^-(v_m) \leq a^{m-n}I_n^-(v_n)$.
Hence \eqref{eq:norm-equiv-Im} yields $\|v_m\| \leq Ca^{m-n}\|v_n\|$ with $C = \frac{1+c_3}{c_1^2}$.

Similarly, one can prove that if $v_m \in X_\tx u(m)$, then $\|A(n, m)v_m\| \leq Cb^{n-m}\|v_m\|$,
with $C = \frac{1+c_4}{c_1^2c_4}$.
\end{proof}
Next, we prove below that \eqref{eq:contains-K-2} holds for $c_3$ sufficiently large if
$\alpha_{k, n}^-$ are uniformly linearly independent, by which we mean that there exists $c_6 > 0$ such that
\begin{equation}
\label{eq:alpham-indep}
\Big\|\sum_{k=1}^K b_k \alpha_{k, n}^-\Big\|_{X^*} \geq c_6 \max_{1 \leq k \leq K}|b_k|,\quad\text{for all }(b_1, \ldots, b_K) \in \bR^K.
\end{equation}
\begin{proposition}
\label{prop:Kspace}
The cone $\cV_\tx s(c_3, n)$ contains no linear subspace of dimension $K + 1$.
If \eqref{eq:alpham-indep} holds and $c_3 > \frac{2K}{c_1c_2c_6}$, then it contains a linear subspace of dimension $K$.
\end{proposition}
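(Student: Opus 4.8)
The statement has two parts, and I would address them in this order.

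\emph{Part 1: $\cV_\tx s(c_3, n)$ contains no linear subspace of dimension $K+1$.} The plan is to argue by contradiction using the characterisation \eqref{eq:Inm-form} of $I_n^-$ in terms of the $K$ functionals $\alpha_{k,n}^-$. Suppose $W \subset \cV_\tx s(c_3, n)$ is a linear subspace with $\dim W = K+1$. The map $W \to \bR^K$ given by $v \mapsto (\la \alpha_{1,n}^-, v\ra, \ldots, \la \alpha_{K,n}^-, v\ra)$ has nontrivial kernel, so there is $v \in W \setminus \{0\}$ with $\la \alpha_{k,n}^-, v\ra = 0$ for all $k$. By the left inequality in \eqref{eq:Inm-form}, $I_n^-(v) \le \tfrac{1}{c_2}\max_k |\la \alpha_{k,n}^-, v\ra| = 0$, hence $I_n^-(v) = 0$. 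But then $v \in \cV_\tx s(c_3, n)$ forces $I_n^+(v) \le c_3 I_n^-(v) = 0$, so $I_n^-(v) + I_n^+(v) = 0$, and the lower bound in \eqref{eq:norm-equiv} gives $c_1 \|v\|_X \le 0$, i.e. $v = 0$, a contradiction. This part is routine.

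\emph{Part 2: if \eqref{eq:alpham-indep} holds and $c_3 > \tfrac{2K}{c_1 c_2 c_6}$, then $\cV_\tx s(c_3, n)$ contains a linear subspace of dimension $K$.} The natural candidate is $W := \bigcap_{k=1}^{K}\ker \alpha_{k,n}^+$ — the annihilator of the $K$ unstable functionals — but here the relevant functionals for $I_n^-$ are $\alpha_{k,n}^-$, and in fact the statement concerns the \emph{stable} cone, so there is no $I_n^+$-data of the form \eqref{eq:Inp-form} available in this backward setting. Instead the plan is to exhibit $W$ as a suitable $K$-dimensional space on which $I_n^+$ is small relative to $I_n^-$. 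Concretely, I expect one takes $W$ of the form $\{v : \la \beta_j, v\ra = \la \alpha_{j,n}^-, v \ra \cdot t_j \ \text{for some fixed relations}\}$ — but more cleanly: pick $W$ to be \emph{any} $K$-dimensional subspace on which the $\alpha_{k,n}^-$ form a ``controlling'' family. Here is the mechanism I would use. Because the $\alpha_{k,n}^-$ are uniformly linearly independent in the sense of \eqref{eq:alpham-indep}, one can find (by a Hahn--Banach / biorthogonal-system argument) vectors $e_1, \ldots, e_K \in X$ with $\|e_j\|_X \le \tfrac{2}{c_6}$ (say) and $\la \alpha_{k,n}^-, e_j \ra = \delta_{kj}$. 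Wait — this is the subtle point and the main obstacle, so let me state it carefully.

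\emph{Main obstacle and how to handle it.} The crux is constructing a $K$-dimensional space $W$ on which $I_n^+(v) \le c_3 I_n^-(v)$ holds for \emph{every} $v \in W$, using only \eqref{eq:norm-equiv}, \eqref{eq:Inm-form}, and \eqref{eq:alpham-indep}. The plan: let $W = \spn\{e_1, \ldots, e_K\}$ where $e_j$ are a bounded biorthogonal system dual to $(\alpha_{k,n}^-)$, i.e. $\la \alpha_{k,n}^-, e_j\ra = \delta_{kj}$ with $\sup_j \|e_j\|_X \le M$ for a constant $M$ depending only on $c_6$ and $K$ (such a system exists, with $M \lesssim \tfrac{1}{c_6}$ or $\tfrac{K}{c_6}$ depending on the construction, by a standard argument on the finite-dimensional quotient $X / \bigcap_k \ker\alpha_{k,n}^-$ together with a norm-preserving extension). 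Then for $v = \sum_j b_j e_j \in W$, on one hand $\max_k |\la \alpha_{k,n}^-, v\ra| = \max_j |b_j|$, so by \eqref{eq:Inm-form}, $I_n^-(v) \ge c_2 \max_j |b_j|$; on the other hand $\|v\|_X \le M \sum_j |b_j| \le MK \max_j |b_j|$, so by \eqref{eq:norm-equiv}, $I_n^+(v) \le \tfrac{1}{c_1}\|v\|_X \le \tfrac{MK}{c_1}\max_j |b_j| \le \tfrac{MK}{c_1 c_2} I_n^-(v)$. Choosing the biorthogonal system so that $M \le \tfrac{2}{c_6}$ (achievable since \eqref{eq:alpham-indep} controls the inverse of the coordinate map on the relevant $K$-dimensional space), we get $I_n^+(v) \le \tfrac{2K}{c_1 c_2 c_6} I_n^-(v) < c_3 I_n^-(v)$ by the hypothesis on $c_3$. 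Hence $W \subset \cV_\tx s(c_3, n)$ and $\dim W = K$ (the $e_j$ are linearly independent since the $\alpha_{k,n}^-$ separate them). The delicate point to get right is the precise constant $M$ in the biorthogonal construction — one must verify that \eqref{eq:alpham-indep} really does yield $M \le 2/c_6$ (and not, say, $M \le 2K/c_6$, which would change the threshold on $c_3$); this likely requires choosing the $e_j$ not as a literal biorthogonal system but via a near-minimal-norm selection, and is where the bulk of the care in the proof goes.
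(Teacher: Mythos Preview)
Your proposal is correct and follows essentially the same route as the paper. Part~1 is identical. For Part~2, the paper also builds a $K$-dimensional span of vectors $z_k$ with $\|z_k\|=1$, $\la\alpha_{j,n}^-,z_k\ra=0$ for $j\neq k$, and $\la\alpha_{k,n}^-,z_k\ra\geq \tfrac{1}{2}c_6$; this is exactly your biorthogonal system $e_k$ rescaled, and the final estimate is the same computation you give. The ``delicate point'' you flag --- that one really gets $M\leq 2/c_6$ rather than $2K/c_6$ --- is handled in the paper by the explicit Hahn--Banach duality identity
\[
\sup_{v\in Y_k,\ \|v\|=1}\la\alpha_{k,n}^-,v\ra \;=\; \inf_{(b_j):\,b_k=1}\Big\|\sum_j b_j\alpha_{j,n}^-\Big\|_{X^*}\;\geq\;c_6,
\]
where $Y_k=\bigcap_{j\neq k}\ker\alpha_{j,n}^-$. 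Your quotient-space formulation gives the same bound: on $X/\bigcap_k\ker\alpha_{k,n}^-$ the biorthogonal vectors $\bar e_j$ satisfy $\|\bar e_j\|=\sup\{|b_j|:\|\sum_k b_k\alpha_{k,n}^-\|\leq 1\}\leq 1/c_6$ directly from \eqref{eq:alpham-indep}, and lifting to $X$ with nearly the quotient norm gives $\|e_j\|\leq 2/c_6$. So your uncertainty about the constant is unfounded --- the straightforward biorthogonal construction already delivers it.
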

\begin{proof}
If $\Sigma \subset X$ is a linear subspace of dimension $K+1$, then there exists $0 \neq v \in \Sigma$
such that
\begin{equation*}
\la \alpha_{k,n}^-, v\ra = 0, \qquad\text{for all }k \in \{1, \ldots, K\},
\end{equation*}
which implies $v \notin \cV_\tx s(c, n)$, for any $c > 0$.

Now assume $c_3 > \frac{2K}{c_1c_2c_6}$.
Fix $n \geq 0$ and for $k \in \{1, \ldots, K\}$ let
$Y_k := \bigcap_{j \neq k} \ker \alpha_{j, n}^-$. We have
\begin{equation}
\label{eq:hahn-banach}
\sup_{v_0 \in Y_k,\,\|v_0\| = 1}\la \alpha_{k, n}^-, v_0\ra = \inf_{(b_j)\in \bR^K,\,b_k=1}
\Big\|\sum_{j=1}^K b_j\alpha_{j, n}^-\Big\|_{X^*}.
\end{equation}
Indeed, for all $v_0 \in Y_k$ and $(b_j) \in \bR^K$ such that $\|v_0\| = b_k = 1$ we have
\begin{equation*}
\la \alpha_{k, n}^-, v_0\ra = \Big\la \sum_{j=1}^K b_j\alpha_{j, n}^-, v_0\Big\ra
\leq \Big\|\sum_{j=1}^K b_j\alpha_{j, n}^-\Big\|_{X^*},
\end{equation*}
which implies that the left hand side in \eqref{eq:hahn-banach}
is smaller or equal to the right hand side.
Suppose the strict inequality holds, in other words $\alpha_{k, n}^-$ defines
a linear functional on $Y_k$ of norm strictly smaller than the right hand side of \eqref{eq:hahn-banach}.
Then, by the Hahn-Banach theorem, there exists $\alpha \in X^*$
such that $\|\alpha\|_{X^*}$ is strictly smaller than the right hand side of \eqref{eq:hahn-banach}
and $Y_k \subset \ker(\alpha - \alpha_{k, n}^-)$. But it is well-known that the last condition
implies that there exist $b_1, \ldots, b_{k-1}, b_{k+1}, \ldots, b_K \in \bR$ such that
$\alpha - \alpha_{k, n}^- = \sum_{j \neq k}b_j\alpha_{j, n}^-$, so we get a contradiction.
This proves \eqref{eq:hahn-banach}.

By \eqref{eq:alpham-indep}, the right hand side of \eqref{eq:hahn-banach} is $\geq c_6$,
thus for all $k \in \{1, \ldots, K\}$ there exists $z_k \in X$ such that
\begin{equation}
\label{eq:Z-properties}
\|z_k\| = 1,\quad \la\alpha_{k, n}^-, z_k\ra \geq \frac 12 c_6,\quad
\la \alpha_{j, n}^-, z_k\ra = 0\ \text{for }j \neq k.
\end{equation}
Let $\wt X_{\tx s}(n)$ be the subspace spanned by the vectors $z_k$.
Clearly, the vectors $z_k$ are linearly independent, so $\dim\wt X_{\tx s} = K$.
Let $(a_k) \in \bR^K$. We should prove that $v_0 := \sum_k a_k z_k \in \cV_{\tx s}(c_2, n)$.
From \eqref{eq:Z-properties} and \eqref{eq:Inm-form}, we have
\begin{equation*}
I_n^-(v_0) \geq c_2\max_k |\la \alpha_{k, n}^-, v_0\ra| \geq \frac 12 c_2c_6\max_k |a_k| \geq \frac{c_2c_6}{2K}\|v_0\| \geq \frac{c_1c_2c_6}{2K}I_n^+(v_0),
\end{equation*}
where the last inequality follows from \eqref{eq:norm-equiv}.
Since $c_3 > \frac{2K}{c_1c_2c_6}$, this shows that $v_0 \in \cV_\tx s(c_3, n)$ and finishes the proof.
\end{proof}
%\begin{remark}
%We would like some $c \leq c_0$ to satisfy the assumption of Lemma~\ref{lem:V-inv}.
%This is equivalent to
%\begin{equation}
%\label{eq:eta-size}
%\eta < \min\Big(\frac{\lambda_0}{2}, \frac{c_0\lambda_0}{c_0+1}\Big).
%\end{equation}
%\end{remark}
To finish this section, we prove Proposition~\ref{prop:cond-nec}.
The proof follows a well-known scheme, see~\cite{Muldowney} in the case of ODEs.
\begin{proof}[Proof of Proposition~\ref{prop:cond-nec}]
Assume \eqref{eq:intro-dynsyst-2} has an exponential dichotomy with values $a$ and $b$. For $n \geq 0$ we define
\begin{align*}
I_n^+(v_n) &:= \sup_{0 \leq m \leq n}b^{n-m}\|B(n, m)^{-1}\pi_\tx u(n)v_n\|, \\
I_n^-(v_n) &:= \sup_{m \geq n} a^{n-m}\|B(m, n)\pi_\tx s(n)v_n\|.
\end{align*}
Directly from Definition~\ref{def:intro-exp-dich-2} we get
\begin{align*}
\|\pi_\tx u(n)v_n\| \leq I_n^+(v_n) \leq C\|\pi_\tx u(n)v_n\|, \\
\|\pi_\tx s(n)v)n\| \leq I_n^-(v_n) \leq C\|\pi_\tx s(n)v_n\|,
\end{align*}
which implies \eqref{eq:norm-equiv}.
It is clear that $I_n^+$ and $I_n^-$ are seminorms, in particular they are continuous.
Moreover, we have
\begin{equation*}
\begin{aligned}
I_{n+1}^+(B_n v_n) &= \sup_{0 \leq m\leq n+1}b^{n+1-m}\|B(n+1, m)^{-1}\pi_\tx u(n+1)(B_n v_n)\| \\
&\geq b\sup_{0\leq m\leq n}b^{n-m}\|B(n+1, m)^{-1}B_{n}\pi_\tx u(n)v_n\| = b I_n^+(v_n),
\end{aligned}
\end{equation*}
and similarly $I_{n+1}^-(B_n v_n) \leq a I_n^-(v_n)$.

Now assume that $X_\tx u(n)$ has finite dimension $K$. Since $I_n^+$ is a norm on $X_\tx u(n)$,
existence of linear functionals $\alpha_{k,n}^+ \in (X_\tx u(n))^*$  such that \eqref{eq:Inp-form}
holds on $X_\tx u(n)$ (with the constant depending only on $K$) is a classical fact in Convex Geometry
(it can be proved for example using the John's ellipsoid).
Now it suffices to extend $\alpha_{k, n}^+$ on the whole $X$ be setting $\la \alpha_{k, n}^+, v\ra = 0$
for $v \in X_\tx s(n)$.
\end{proof}

\subsection{Discrete forward dynamical systems}
\label{ssec:projections-dis-2}
In this section, we prove Theorem~\ref{thm:main-dis-2}.
\begin{lemma}
\label{lem:V-inv-2}
For all $n \geq 0$ and $c \in [c_4, c_3]$ there is
\begin{gather*}
\conj{B_n \cV_\tx u(c, n)}\subset \cV_\tx u(c, n+1), \\
B_n^{-1}\cV_\tx s(c, n+1) \subset \cV_\tx s(c, n).
\end{gather*}
\end{lemma}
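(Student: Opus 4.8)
The plan is to imitate the proof of Lemma~\ref{lem:V-inv}, reversing the direction of time and the roles of $\cV_\tx s$ and $\cV_\tx u$, and using the forward inequalities \eqref{eq:intro-ineq-Im-2}--\eqref{eq:intro-ineq-Ip-2} (valid for every $c \in [c_4, c_3]$ by the remark following Theorem~\ref{thm:main-dis-2}), together with the elementary fact that $\cV_\tx s(c, n) \cup \cV_\tx u(c, n) = X$ for every $n$. For the first inclusion I would first observe that $\cV_\tx u(c, n+1)$ is closed, being cut out by the non-strict inequality $I_{n+1}^+ \ge c\, I_{n+1}^-$ between continuous functionals, so it suffices to show $B_n\cV_\tx u(c, n) \subset \cV_\tx u(c, n+1)$. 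Arguing by contradiction: if $v_n \in \cV_\tx u(c, n)$ but $B_n v_n \notin \cV_\tx u(c, n+1)$, then $B_n v_n \in \cV_\tx s(c, n+1)$, so \eqref{eq:intro-ineq-Im-2} gives $I_{n+1}^-(B_n v_n) \le a\, I_n^-(v_n)$, while \eqref{eq:intro-ineq-Ip-2} and $v_n \in \cV_\tx u(c, n)$ give $I_{n+1}^+(B_n v_n) \ge b\, I_n^+(v_n) \ge bc\, I_n^-(v_n)$. Since $a < b$, these chain to $I_{n+1}^+(B_n v_n) \ge \frac{bc}{a}\, I_{n+1}^-(B_n v_n) \ge c\, I_{n+1}^-(B_n v_n)$, i.e.\ $B_n v_n \in \cV_\tx u(c, n+1)$, a contradiction.

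For the second inclusion, $B_n^{-1}\cV_\tx s(c, n+1)$ is by definition the set of $v_n \in X$ with $B_n v_n \in \cV_\tx s(c, n+1)$, so no closure is involved. If such a $v_n$ were not in $\cV_\tx s(c, n)$, then $v_n \in \cV_\tx u(c, n)$ with $I_n^+(v_n) > c\, I_n^-(v_n)$; applying \eqref{eq:intro-ineq-Im-2} to $B_n v_n \in \cV_\tx s(c, n+1)$ and \eqref{eq:intro-ineq-Ip-2} to $v_n \in \cV_\tx u(c, n)$ exactly as above gives $I_{n+1}^+(B_n v_n) > c\, I_{n+1}^-(B_n v_n)$, contradicting $B_n v_n \in \cV_\tx s(c, n+1)$. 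In the degenerate case $I_n^-(v_n) = 0$ one has $I_n^+(v_n) > 0$ and $I_{n+1}^-(B_n v_n) = 0 < I_{n+1}^+(B_n v_n)$, so the contradiction persists.

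There is no serious obstacle here — the statement is essentially a transcription of Lemma~\ref{lem:V-inv}. The one point that needs attention is the asymmetry of the hypotheses in \eqref{eq:intro-ineq-Im-2}--\eqref{eq:intro-ineq-Ip-2}: the contraction estimate requires the \emph{image} $B_n v_n$ to lie in a stable cone, whereas the expansion estimate requires the \emph{source} $v_n$ to lie in an unstable cone, so in each contradiction argument one must check that the vector which is assumed (via the negation of the desired inclusion) to lie in a cone is exactly the one the relevant inequality demands. A secondary point, worth stating explicitly, is the degenerate case $I_n^- = 0$, since the displayed inequality chains are not literally obtained by dividing by $I_n^-$.
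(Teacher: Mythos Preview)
Your proof is correct and follows essentially the same approach as the paper's: both argue by contradiction, using that $\cV_\tx s(c,n)\cup\cV_\tx u(c,n)=X$ and chaining \eqref{eq:intro-ineq-Im-2}--\eqref{eq:intro-ineq-Ip-2} to obtain the opposite cone membership. Your explicit remarks on the closedness of $\cV_\tx u(c,n+1)$ and on the degenerate case $I_n^-(v_n)=0$ are extra care the paper leaves implicit (the inequality chain involves no division, so the degenerate case is automatically covered), but they do no harm.
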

\begin{proof}
In order to prove the first inclusion, suppose $v_n \in X$ is such that $v_n \in \cV_\tx u(c, n)$
and $B_n v_n \notin \cV_\tx u(c, n+1)$, thus $B_n v_n \in \cV_\tx s(c, n+1)$. 
From \eqref{eq:intro-ineq-Im-2} and \eqref{eq:intro-ineq-Ip-2} we get
\begin{equation*}
I_{n+1}^+(B_n v_n) \geq b I_n^+(v_n) \geq b c I_n^-(v_n) \geq \frac{b c}{a}I_{n+1}^-(B_n v_n)
\geq c I_{n+1}^-(B_n v_n),
\end{equation*}
which contradicts $B_n v_n \notin \cV_\tx u(c, n+1)$.

In order to prove the second inclusion, suppose $v_n \in X$ is such that $B_n v_n \in \cV_\tx s(c, n+1)$
and $v_n \notin \cV_\tx s(c, n)$, thus $v_n \in \cV_\tx u(c, n)$.
From \eqref{eq:intro-ineq-Im-2} and \eqref{eq:intro-ineq-Ip-2} we get
\begin{equation*}
I_{n+1}^+(B_n v_n) \geq b I_n^+(v_n) > {b}{c}I_n^-(v_n) \geq \frac{b c}{a}I_{n+1}^-(B_n v_n)
\geq c I_{n+1}^-(B_n v_n),
\end{equation*}
which contradicts $B_n v_n \in \cV_\tx s(c, n+1)$.
\end{proof}
For $n \geq 0$ and $c \in [c_4, c_3]$ we define the stable subspace by
\begin{equation}
\label{eq:Xs-def-2}
X_\tx s(c, n) := \bigcap_{n' > n} B(n', n)^{-1}(\cV_\tx s(c, n')).
\end{equation}
Clearly, $X_\tx s(c, n)$ is a closed set.

The statement and proof of Lemma~\ref{lem:Xs-charact-2} are very similar
to the statement and proof of Lemma~\ref{lem:Xs-charact}.
We provide the details for the sake of completeness.
\begin{lemma}
\label{lem:Xs-charact-2}
For all $n \geq 0$ and $c \in [c_4, c_3]$ the following conditions are equivalent:
\begin{enumerate}[(i)]
\item
\label{it:Xs-charact-i-2}
$w \in X_\tx s(c, n)$,
\item
\label{it:Xs-charact-ii-2}
there exists $C \geq 0$ such that for all $n' \geq n$ the bound
$\|B(n', n) w\| \leq C a^{n'-n}$ holds,
\item
\label{it:Xs-charact-iii-2}
there exist $C \geq 0$ and $d < b$ such that for all $n' \geq n$ the bound
$\|B(n', n) w\| \leq C d^{n'-n}$ holds.
\end{enumerate}
\end{lemma}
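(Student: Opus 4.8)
The plan is to mirror the three-step cyclic argument used for Lemma~\ref{lem:Xs-charact}, adapting it to the forward system. Note first that the forward case is actually \emph{simpler}: since the dynamics run forward, a solution starting exactly at $w$ always exists (namely $v_{n'} = B(n', n)w$), so there is no need for the $\delta$-approximation that appears in the backward statement. This is why condition \ref{it:Xs-charact-ii-2} is phrased directly in terms of $\|B(n', n)w\|$ rather than via approximate solutions.

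\textbf{\ref{it:Xs-charact-i-2} $\Rightarrow$ \ref{it:Xs-charact-ii-2}.} Let $w \in X_\tx s(c, n)$, so for every $n' > n$ we have $B(n', n) w \in \cV_\tx s(c, n')$. Applying Lemma~\ref{lem:V-inv-2} (second inclusion, iterated), the intermediate iterates $v_m := B(m, n)w$ lie in $\cV_\tx s(c, m)$ for all $n \leq m \leq n'$. Then \eqref{eq:intro-ineq-Im-2} gives $I_{m+1}^-(B_m v_m) \leq a I_m^-(v_m)$ for $n \leq m < n'$, hence $I_{n'}^-(B(n',n)w) \leq a^{n'-n} I_n^-(w)$. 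Combining with \eqref{eq:norm-equiv-Im} (valid since $B(n',n)w \in \cV_\tx s(c, n')$) and with \eqref{eq:norm-equiv} applied to $w$ yields $\|B(n', n)w\| \leq \frac{1+c}{c_1^2} a^{n'-n}\|w\|$, which is \ref{it:Xs-charact-ii-2} with $C = \frac{1+c}{c_1^2}\|w\|_X$.

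\textbf{\ref{it:Xs-charact-ii-2} $\Rightarrow$ \ref{it:Xs-charact-iii-2}} is immediate from $a < b$ (take $d$ with $a < d < b$).

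\textbf{\ref{it:Xs-charact-iii-2} $\Rightarrow$ \ref{it:Xs-charact-i-2}.} Suppose $w \notin X_\tx s(c, n)$, so there is $m > n$ with $B(m, n)w \notin \cV_\tx s(c, m)$, i.e.\ $v_m := B(m, n)w \in \cV_\tx u(c, m)$. By Lemma~\ref{lem:V-inv-2} (first inclusion, iterated), $v_{n'} := B(n', n)w \in \cV_\tx u(c, n')$ for all $n' \geq m$, whence by \eqref{eq:intro-ineq-Ip-2}, $I_{n'}^+(v_{n'}) \geq b^{n'-m} I_m^+(v_m)$. Using \eqref{eq:norm-equiv-Ip} on $v_{n'}$ and \eqref{eq:norm-equiv} on $v_m$ we get a lower bound $\|v_{n'}\| \geq \frac{c_1^2 c}{1+c} b^{n'-m}\|v_m\|$, which grows like $b^{n'}$ since $v_m \neq 0$ (as $\|w\| \neq 0$ and $B(m,n)$ restricted appropriately is comparable to $I^-$... in fact $v_m \in \cV_\tx u$ with $v_m \neq 0$ because $I^+_m(v_m) \geq c I^-_m(v_m)$ and if $v_m = 0$ then tracing back $w$ would have $\|B(n',n)w\|$ eventually... — more simply, $v_m \neq 0$ since otherwise $I_{n'}^+(v_{n'}) = 0$ for all $n' \geq m$, forcing $v_{n'} \in \cV_\tx s$, contradiction). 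This contradicts the upper bound $\|v_{n'}\| \leq C d^{n'-n}$ with $d < b$ once $n'$ is large.

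\textbf{Main obstacle.} The only delicate point is the last step: ensuring $v_m \neq 0$ so the exponential lower bound is non-degenerate. In the backward case this was handled through the explicit $\delta > 0$ and the operator norms $\|A_{n+1}\|\cdots\|A_m\|$; here, since we have an honest forward orbit, the cleanest route is to observe that if $v_m = 0$ then $v_{n'} = 0 \in \cV_\tx s(c, n')$ for all $n' \geq m$, contradicting $v_m \in \cV_\tx u(c,m) \setminus \{0\}$ — but we must first rule out $v_m = 0$, which follows because $v_m \in \cV_\tx u(c, m)$ was deduced from $v_m \notin \cV_\tx s(c, m)$, and $0 \in \cV_\tx s(c, m)$; hence $v_m \neq 0$ automatically. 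I would also record, as in the remark after Lemma~\ref{lem:Xs-charact}, that assumptions \eqref{eq:Inp-form}--\eqref{eq:intro-c3c4-2} are not needed for this lemma — only \eqref{eq:norm-equiv}, \eqref{eq:intro-ineq-Im-2}, \eqref{eq:intro-ineq-Ip-2}, and Lemma~\ref{lem:V-inv-2}.
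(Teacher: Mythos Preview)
Your proposal is correct and follows essentially the same three-step cycle as the paper's proof: the definition of $X_\tx s(c,n)$ already places every iterate in the stable cone (so invoking Lemma~\ref{lem:V-inv-2} in step (i)$\Rightarrow$(ii) is harmless but unnecessary), and in (iii)$\Rightarrow$(i) the paper likewise derives exponential growth at rate $b$ and concludes $\|v_m\|=0$, contradicting $v_m \notin \cV_\tx s(c,m)$. Your handling of the ``$v_m \neq 0$'' issue via $0 \in \cV_\tx s(c,m)$ is exactly the observation the paper uses, just stated more concisely there.
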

\begin{proof}
\ref{it:Xs-charact-i} $\Rightarrow$ \ref{it:Xs-charact-ii}.
Let $w \in X_{\tx s}(c, n)$ and set $v_{n'} := B(n', n)w$ for $n' > n$.
By the definition of $X_\tx s(c, n)$, we have $v_{n'} \in \cV_\tx s(n')$ for all $n' > n$.
In particular, the bound \eqref{eq:norm-equiv-Im} holds.
Also, by the proof of Lemma~\ref{lem:V-inv-2}, we have $I_{m+1}^-(v_{m+1}) \leq a I_m^-(v_m)$
for all $m \geq n$. Hence, we obtain \ref{it:Xs-charact-ii-2} with $C = \frac{(1+c)\|w\|}{c_1^2}$.

\ref{it:Xs-charact-ii} $\Rightarrow$ \ref{it:Xs-charact-iii} follows from $a < b$.

\ref{it:Xs-charact-iii} $\Rightarrow$ \ref{it:Xs-charact-i}.
Let $w \in X$ be such that \ref{it:Xs-charact-iii} holds and set $v_{n'} := B(n', n) w$ for $n' > n$.
Suppose that $w \notin X_\tx s(c, n)$. This means that there exists $m > n$ such that $v_m \notin \cV_\tx s(c, m)$,
thus $v_m \in \cV_\tx u(c, m)$. By Lemma~\ref{lem:V-inv-2}, we have $v_{n'} \in \cV_\tx u(c, n')$ for all $n' \geq m$.
Thus
\begin{equation*}
\frac{C}{c_1}d^{n'-n}\geq \frac{1}{c_1}\|v_{n'}\| \geq I_{n'}^+(v_{n'}) \geq b^{n' - m} I_m^+(v_m)
\geq c_1 b^{n'-m}\|v_m\|.
\end{equation*}
Since we assume $d < b$, this implies $\|v_m\| = 0$, contradicting $v_m \notin \cV_\tx s(c, m)$.
\end{proof}
\begin{corollary}
\label{cor:Xs-2}
For all $n \geq 0$, $X_\tx s(c, n) = X_\tx s(n)$ does not depend on $c \in [c_4, c_3]$.
It is a closed linear subspace of $X$
and $X_\tx s(n) = B_n^{-1}X_\tx s(n+1)$.
\end{corollary}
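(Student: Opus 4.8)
The plan is to mirror the proof of Corollary~\ref{cor:Xs}, using Lemma~\ref{lem:Xs-charact-2} in place of Lemma~\ref{lem:Xs-charact}. By the equivalence \ref{it:Xs-charact-i-2} $\Leftrightarrow$ \ref{it:Xs-charact-ii-2} in Lemma~\ref{lem:Xs-charact-2}, for every $c \in [c_4, c_3]$ the set $X_\tx s(c, n)$ equals the set of $w \in X$ for which there exists $C \geq 0$ with $\|B(n', n)w\| \leq C a^{n'-n}$ for all $n' \geq n$. This description does not involve $c$, so $X_\tx s(c, n) = X_\tx s(n)$ is independent of $c$; and it is manifestly closed under scalar multiplication and, since one may add the constants, under addition, so $X_\tx s(n)$ is a linear subspace. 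Closedness is read off directly from \eqref{eq:Xs-def-2}: each cone $\cV_\tx s(c, n')$ is closed because it is cut out by the continuous inequality $I_{n'}^+(v) \leq c\, I_{n'}^-(v)$, hence each preimage $B(n', n)^{-1}(\cV_\tx s(c, n'))$ is closed (as $B(n', n)$ is bounded), and $X_\tx s(n)$ is an intersection of such sets.

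It remains to prove $X_\tx s(n) = B_n^{-1}X_\tx s(n+1)$, i.e. that $w \in X_\tx s(n)$ if and only if $B_n w \in X_\tx s(n+1)$. I would again use characterisation \ref{it:Xs-charact-ii-2} together with the identity $B(n', n+1)B_n = B(n', n)$. If $w \in X_\tx s(n)$ with constant $C$, then for $n' \geq n+1$ we have $\|B(n', n+1)(B_n w)\| = \|B(n', n)w\| \leq C a^{n'-n} = (Ca)\,a^{n'-(n+1)}$, so $B_n w \in X_\tx s(n+1)$. Conversely, if $B_n w \in X_\tx s(n+1)$ with constant $C'$, then $\|B(n', n)w\| = \|B(n', n+1)(B_n w)\| \leq C' a^{n'-(n+1)} = (C'/a)\,a^{n'-n}$ for $n' \geq n+1$, while for $n' = n$ one trivially has $\|B(n, n)w\| = \|w\|$; taking $C := \max\{\|w\|,\, C'/a\}$ shows $w \in X_\tx s(n)$. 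Alternatively, one can argue directly from \eqref{eq:Xs-def-2}, writing $B_n^{-1}X_\tx s(n+1) = \bigcap_{n' > n+1} B(n', n)^{-1}(\cV_\tx s(c, n'))$ and observing, via Lemma~\ref{lem:V-inv-2}, that $X_\tx s(c, n+1) \subset \cV_\tx s(c, n+1)$, which supplies the missing $n' = n+1$ term in the intersection defining $X_\tx s(n)$.

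I do not anticipate a genuine obstacle; the only point requiring a little care is the bookkeeping with the constant $C$ when shifting the base point from $n$ to $n+1$ (and back), exactly as in the translation argument of Lemma~\ref{lem:Xs-charact-2}. This is legitimate because condition \ref{it:Xs-charact-ii-2} only asserts the \emph{existence} of some such constant, so it may be adjusted after the fact.
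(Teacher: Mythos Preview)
Your proof is correct and follows essentially the same approach as the paper: independence of $c$ and linearity via characterisation~\ref{it:Xs-charact-ii-2}, closedness read off from the definition~\eqref{eq:Xs-def-2}, and the identity $X_\tx s(n) = B_n^{-1}X_\tx s(n+1)$ also deduced directly from~\eqref{eq:Xs-def-2}. The paper's proof is very terse (one sentence for the last two claims), so your more detailed bookkeeping---both the argument via~\ref{it:Xs-charact-ii-2} with adjusted constants and the alternative via~\eqref{eq:Xs-def-2} and Lemma~\ref{lem:V-inv-2}---simply spells out what the paper leaves implicit.
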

\begin{proof}
Condition~\ref{it:Xs-charact-ii-2} in Lemma~\ref{lem:Xs-charact-2} defines a linear subspace
independent of $c \in [c_4, c_3]$ so, by Lemma~\ref{lem:Xs-charact-2}, $X_\tx s(n)$ is a linear subspace of $X$.
We see directly from \eqref{eq:Xs-def-2} that it is closed
and that $X_\tx s(n) = B_n^{-1}X_\tx s(n+1)$.
\end{proof}
\begin{proof}[Proof of Theorem~\ref{thm:main-dis-2}]
We let $X_\tx u(0)$ be any $K$-dimensional linear subspace contained in $\cV_\tx u(c_3, 0)$.
Such a space exists by assumption~\eqref{eq:contains-K-2}.
If $v_0 \in X_\tx u(0)$ and $v_{n+1} = B_n v_{n}$ for $n \geq 0$, then by Lemma~\ref{lem:V-inv-2}
$v_n \in \cV_\tx u(c_3, n)$ for all $n$, which by \eqref{eq:intro-ineq-Ip-20} yields
$I_n^+(v_n) \geq b^n I_0^+(v_0)$. Using \eqref{eq:norm-equiv-Ip}, we obtain
\begin{equation*}
\|v_n\| \geq \frac{c_1^2 c_3}{1+c_3}b^n\|v_0\|,
\end{equation*}
thus $B(n, 0)\vert_{X_\tx u(0)}$ is a linear embedding. We set
\begin{equation*}
X_{\tx{u}}(n) := B(n, 0)X_\tx u(0),
\end{equation*}
which is a linear subspace of $X$ of dimension $K$.
As for backward systems, the choice of $X_\tx u(n)$ is not canonical.

We will find $c_5 > 0$ such that if $v \in X_\tx s(n)$ and $w \in X_\tx u(n)$,
then
\begin{equation}
\label{eq:v+w-lbound-2}
\|v + w\| \geq c_5\|w\|.
\end{equation}
We can assume $\|v\| \leq \frac 32 \|w\|$. 
Since $X_\tx u(n) \subset \cV_\tx u(c_3, n)$, \eqref{eq:norm-equiv-Ip} yields $I_n^+(w) \geq \frac{c_1c_3}{1+c_3}\|w\|_X$,
thus by \eqref{eq:Inp-form} there is $k_0 \in \{1, \ldots, K\}$ such that
\begin{equation*}
%\label{eq:alphav-big-2}
|\la \alpha_{k_0}^+, w\ra| \geq \frac{c_1c_2c_3}{1+c_3}\|w\|.
\end{equation*}
Since $X_\tx s(n) \subset \cV_\tx s(c_4, n)$, we have $I_n^+(v) \leq c_4 I_n^-(v)$,
so \eqref{eq:norm-equiv} yields $(1+c_4)I_n^+(v) \leq \frac{c_4}{c_1}\|v\| \leq \frac{3c_4}{2c_1}\|w\|$.
Invoking again \eqref{eq:Inp-form} we obtain
\begin{equation*}
%\label{eq:alphaw-small-2}
|\la \alpha_{k_0}^+, v\ra| \leq \frac{3c_4}{2c_1c_2(1+c_4)}\|w\|.
\end{equation*}
From \eqref{eq:alphav-big} and \eqref{eq:alphaw-small} we get
\begin{equation*}
\|v + w\| \geq c_1 I_n^+(v+w) \geq c_1c_2|\la \alpha_{k_0}^+, v+w\ra| \geq c_1c_2\Big(\frac{c_1c_2c_3}{1+c_3} - \frac{3c_4}{2c_1c_2(1+c_4)}\Big)\|w\|.
\end{equation*}
Assumption \eqref{eq:intro-c3c4-2} implies that the constant in front of $\|w\|$ is $> 0$, so we have proved \eqref{eq:v+w-lbound-2}.

As in the proof of Theorem~\ref{thm:main-dis}, we obtain $X_\tx s(n) \cap X_\tx u(n) = \{0\}$ for all $n \geq 0$.
Let $u \in X$ and let $\Pi := u + X_\tx u(n)$.
For any $n' \geq n$, let $\wt X_\tx s(n') := \bigcap_{k=1}^K \ker(\alpha_{k, n'}^-)$.
We see that $\codim(\wt X_\tx s(n')) \leq K$ and $\wt X_\tx s(n') \subset \cV_\tx s(n')$,
which implies that $B(n', n)^{-1}(\cV_{\tx s}(n'))$ contains a space of codimension $K$.
Since $X_\tx u(n) \cap B(n', n)^{-1}(\cV_{\tx s}(n')) = \{0\}$, we obtain $\Pi \cap B(n', n)^{-1}(\cV_{\tx s}(n')) \neq \emptyset$.
Thus $\Pi \cap B(n', n)^{-1}(\cV_\tx s(n'))$ is a nested family of closed non-empty sets
and it suffices to show that their diameters tend to 0 as $n' \to \infty$,
which can be done similarly as in the proof of Theorem~\ref{thm:main-dis}.
\end{proof}
If $\alpha_{k, n}^-$ are uniformly linearly independent, by which we mean that there exists $c_6 > 0$ such that
\begin{equation}
\label{eq:alphap-indep}
\Big\|\sum_{k=1}^K b_k \alpha_{k, n}^+\Big\|_{X^*} \geq c_6 \max_{1 \leq k \leq K}|b_k|,\quad\text{for all }(b_1, \ldots, b_K) \in \bR^K,
\end{equation}
then the proof of Proposition~\ref{prop:Kspace} shows that \eqref{eq:contains-K} holds if $c_3$ is small enough. We have
\begin{proposition}
\label{prop:Kspace-2}
The cone $\cV_\tx u(c_3, n)$ contains no linear subspace of dimension $K + 1$.
If \eqref{eq:alphap-indep} holds and $c_3 < \frac{c_1c_2c_6}{2K}$, then it contains a linear subspace of dimension $K$. \qed
\end{proposition}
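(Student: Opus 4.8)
The plan is to follow, almost verbatim, the scheme used to prove Proposition~\ref{prop:Kspace}, exchanging the roles of the ``$-$'' and ``$+$'' data throughout: the functionals $\alpha^-_{k,n}$ become $\alpha^+_{k,n}$, the estimate \eqref{eq:Inm-form} becomes \eqref{eq:Inp-form}, the cone $\cV_\tx s$ becomes $\cV_\tx u$, and the requirement that $c_3$ be large is replaced by the requirement that it be small, because now the defining inequality of the cone reads $I_n^+ \geq c_3 I_n^-$ instead of $I_n^+ \leq c_3 I_n^-$. So the two assertions to establish are: (a) $\cV_\tx u(c_3,n)$ contains no $(K+1)$-dimensional subspace; (b) under \eqref{eq:alphap-indep} and $c_3 < \frac{c_1c_2c_6}{2K}$, it contains a $K$-dimensional one.

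For (a) I would argue by dimension count. If $\Sigma\subset X$ has $\dim\Sigma = K+1$, then the $K$ linear conditions $\la\alpha^+_{k,n},\cdot\ra = 0$ cannot be linearly independent on $\Sigma$, so there is a nonzero $v\in\Sigma$ with $\la\alpha^+_{k,n}, v\ra = 0$ for every $k\in\{1,\ldots,K\}$. The upper bound in \eqref{eq:Inp-form} then gives $I_n^+(v) = 0$; hence, were $v\in\cV_\tx u(c_3,n)$, we would have $0 = I_n^+(v)\geq c_3 I_n^-(v)\geq 0$, so $I_n^-(v) = 0$ too, and \eqref{eq:norm-equiv} would force $v = 0$, a contradiction. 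Thus $\Sigma\not\subset\cV_\tx u(c_3,n)$.

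For (b), fix $n$ and for each $k$ put $Y_k := \bigcap_{j\neq k}\ker\alpha^+_{j,n}$. The Hahn--Banach duality computation from the proof of Proposition~\ref{prop:Kspace} applies word for word and yields
\[
\sup_{v_0\in Y_k,\ \|v_0\|=1}\la\alpha^+_{k,n}, v_0\ra
= \inf_{(b_j)\in\bR^K,\ b_k=1}\Big\|\sum_{j=1}^K b_j\alpha^+_{j,n}\Big\|_{X^*}
\geq c_6,
\]
the last step being \eqref{eq:alphap-indep}. Hence for each $k$ there is $z_k\in X$ with $\|z_k\| = 1$, $\la\alpha^+_{k,n}, z_k\ra\geq\tfrac12 c_6$, and $\la\alpha^+_{j,n}, z_k\ra = 0$ for $j\neq k$; the $z_k$ are linearly independent, so their span $\wt X_\tx u(n)$ has dimension $K$. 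For $v_0 = \sum_k a_k z_k$, combining the lower bound in \eqref{eq:Inp-form}, the triangle inequality $\|v_0\|\leq K\max_k|a_k|$, and \eqref{eq:norm-equiv} gives
\[
I_n^+(v_0)\geq c_2\max_k|\la\alpha^+_{k,n}, v_0\ra|
\geq \tfrac12 c_2 c_6\max_k|a_k|
\geq \frac{c_2 c_6}{2K}\|v_0\|
\geq \frac{c_1 c_2 c_6}{2K}I_n^-(v_0),
\]
and since $c_3 < \frac{c_1 c_2 c_6}{2K}$ this shows $v_0\in\cV_\tx u(c_3,n)$, so $\wt X_\tx u(n)\subset\cV_\tx u(c_3,n)$.

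I do not anticipate any genuine obstacle: once the symmetry with Proposition~\ref{prop:Kspace} is noticed, the only point requiring care is keeping every inequality pointing the right way, which is precisely what turns ``$c_3$ large'' into ``$c_3$ small.'' If one wished to avoid the repetition, one could state Propositions~\ref{prop:Kspace} and \ref{prop:Kspace-2} as the two instances of a single lemma about cones cut out by finitely many uniformly independent functionals, and prove it once.
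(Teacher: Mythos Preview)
Your proposal is correct and follows exactly the approach the paper intends: the paper states that ``the proof is exactly the same as the proof of Proposition~\ref{prop:Kspace}, so we skip it,'' and you have carried out precisely that mirrored argument, swapping $\alpha^-$ for $\alpha^+$, $\cV_\tx s$ for $\cV_\tx u$, and reversing the direction of the threshold inequality on $c_3$. Your computations are accurate and the inequalities are oriented correctly throughout.
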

The proof is exactly the same as the proof of Proposition~\ref{prop:Kspace}, so we skip it.
\subsection{Strongly continuous backward dynamical systems}
\label{ssec:projections-cont}
Our proof adapts easily to the case of continuous dynamics.
\begin{definition}
\label{def:evol-op-back}
Let $X$ be a Banach space. A family of operators $S(t, \tau) \in \scrL(X)$ for $0 \leq t \leq \tau$
is called a \emph{strongly continuous backward evolution operator} if it satisfies:
\begin{enumerate}[(1)]
\item $S(t, t) = \Id$ for all $t \geq 0$,
\item for all $\tau \geq 0$ and all $v_{\tau} \in X$ the function $[0, \tau] \owns t \mapsto S(t, \tau)v_{\tau} \in X$ is continuous,
\item for all $0 \leq t \leq s \leq \tau$ there is $S(t, s)\circ S(s, \tau) = S(t, \tau)$.
\end{enumerate}
\end{definition}
Let $X$ be a Banach space and let $S(t, \tau)$ for $0 \leq t \leq \tau$ be a strongly continuous backward evolution operator.
We consider the dynamical system
\begin{equation}
\label{eq:dynsyst-cont}
v(t) = S(t, \tau)v(\tau), \qquad v(0) = v_0 \in X.
\end{equation}
Note that we do not require $S(t, \tau)$ to be invertible.
\begin{definition}
\label{def:exp-dich-cont}
We say that \eqref{eq:dynsyst} has a (uniform) \emph{exponential dichotomy} with exponents $\lambda$ and $\mu$,
$-\infty < \lambda < \mu < \infty$,
if for all $t \geq 0$ there exists a direct sum decomposition $X = X_\tx s(t) \oplus X_\tx u(t)$
such that $X_\tx s(t)$, $X_\tx u(t)$ and the associated projections
$\pi_\tx s(t): X \to X_\tx s(t)$ and $\pi_\tx u(t): X \to X_\tx u(t)$ have the following properties for all $t \leq \tau$:
\begin{enumerate}
\item $S(t, \tau)\circ\pi_\tx s(\tau) = \pi_\tx s(t)\circ S(t, \tau)$ and $S(t, \tau)\circ\pi_\tx u(\tau) = \pi_\tx u(t)\circ S(t, \tau)$,
\item there exists a constant $C$ such that $\|\pi_\tx s(t)\|_{\scrL(X)} + \|\pi_\tx u(t)\|_{\scrL(X)} \leq C$,
\item $S(t, \tau)\vert_{X_\tx s(\tau)}: X_\tx s(\tau) \to X_\tx s(t)$ is invertible,
\item there exists a constant $C$ such that $\|S(t, \tau)^{-1}v_t\| \leq C \eee^{\lambda(\tau - t)}\|v_\tau\|$ for all $v_t \in X_\tx s(t)$,
\item there exists a constant $C$ such that $\|S(t, \tau)v_\tau\| \leq C \eee^{\mu(t - \tau)}\|v_\tau\|$ for all $v_\tau \in X_\tx u(\tau)$.
\end{enumerate}
\end{definition}
Our sufficient condition for existence of an exponential dichotomy is expressed in terms
of two families of (nonlinear) homogeneous functionals $I_t^-, I_t^+ : X \to \bR_+$.
We assume that $I_t^\pm(v)$ is continuous in $(t, v) \in \bR_+ \times X$.
Given $I_t^-$, $I_t^+$ and a number $c > 0$, we define the stable and the unstable cone
\begin{gather*}
\cV_\tx s(c, t) := \{v \in X: I_t^+(v) \leq c I_t^-(v)\}, \label{eq:Vs-def-c} \\
\cV_\tx u(c, t) := \{v \in X: I_t^+(v) \geq c I_t^-(v)\}. \label{eq:Vu-def-c}
\end{gather*}

Firstly, we assume that there exists $c_1 > 0$ (independent of $t$) such that
\begin{equation}
\label{eq:norm-equiv-c}
c_1 \|v\|_X \leq I_t^-(v) + I_t^+(v) \leq \frac{1}{c_1}\|v\|_X,
\qquad \text{for all }t \geq 0 \text{ and }v \in X.
\end{equation}\noeqref{eq:norm-equiv-c}

Secondly, we assume that there exist $c_2 > 0$, $K \in \{0, 1, 2, \ldots\}$
and $\alpha_{k, t}^- \in X^*$ for $(k, t) \in \{1, \ldots, K\}\times \bR_+$ such that
%\begin{equation}
%\label{eq:alphap-indep}
%\Big\|\sum_{k=1}^K b_k \alpha_{k, n}^+\Big\|_{X^*} \geq c_2 \max_{1 \leq k \leq K}|b_k|
%\end{equation}
%and
\begin{equation}
\label{eq:Inm-form-c}
c_2 \max_{1 \leq k \leq K}|\la \alpha_{k, t}^-, v\ra| \leq I_t^-(v) \leq \frac{1}{c_2}\max_{1 \leq k \leq K}|\la \alpha_{k, t}^-, v\ra|.
\end{equation}\noeqref{eq:Inm-form-c}
Continuity of $\alpha_{k, t}^-$ with respect to $t$ is not required.

Lastly, 
we assume that there exist $c_3, c_4 > 0$ and $-\infty < \lambda < \mu < \infty$ such that
\begin{align}
&V_\tx s(c_4, t)\text{ contains a linear space of dimension }K\text{ for all }t, \label{eq:contains-K-c} \\
&c_4 > 3(c_1c_2)^{-2}(c_3+1), \label{eq:intro-c3c4-c} \\
&I_{\tau}^-(v_\tau) \leq \eee^{\lambda(\tau - t)} I_t^-(S(t, \tau) v_\tau)\quad\text{if }S(t', \tau)v_\tau \in \cV_\tx s(c_4, t')\text{ for all }t' \in [t, \tau], \label{eq:ineq-Im-c} \\
&I_{\tau}^+(v_\tau) \geq \eee^{\mu(\tau - t)} I_{t}^+(S(t, \tau)v_\tau)\quad\text{if }S(t', \tau)v_\tau \in \cV_\tx u(c_3, t')\text{ for all }t' \in [t, \tau]. \label{eq:ineq-Ip-c}
\end{align}
\noeqref{eq:contains-K-c, eq:intro-c3c4-c}
Note that, unlike in \eqref{eq:intro-ineq-Im} and \eqref{eq:intro-ineq-Ip}, here we assume that the $I_t^-$ or $I_t^+$
direction is significant on the whole time interval $[t, \tau]$.
This is why the proof of the invariance of cones given below contains an additional continuity argument.
\begin{lemma}
\label{lem:V-inv-cont}
For all $c \in (c_3, c_4)$ and $0 \leq t \leq \tau$ there is
\begin{gather*}
S(t, \tau)^{-1} \cV_\tx u(c, t)\subset \cV_\tx u(c, \tau), \\
\conj{S(t, \tau)\cV_\tx s(c, \tau)} \subset \cV_\tx s(c, t).
\end{gather*}
\end{lemma}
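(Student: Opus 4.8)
I would mimic the proof of Lemma~\ref{lem:V-inv} for the discrete backward system, but carry an additional continuity/connectedness argument to exploit the fact that the hypotheses \eqref{eq:ineq-Im-c}--\eqref{eq:ineq-Ip-c} only apply when the orbit stays in the relevant cone \emph{throughout} the whole time interval. The main point is: the contraction/expansion inequalities are only usable along ``cone-trapped'' orbits, so before applying them I must first show that a would-be counterexample orbit actually stays in the cone on the whole interval.

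\medskip

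\textbf{First inclusion: } $S(t,\tau)^{-1}\cV_\tx u(c,t)\subset\cV_\tx u(c,\tau)$. Suppose $v_\tau\in X$ with $S(t,\tau)v_\tau\in\cV_\tx u(c,t)$ but $v_\tau\notin\cV_\tx u(c,\tau)$, so $v_\tau\in\cV_\tx s(c,\tau)$ with $I_\tau^+(v_\tau)<cI_\tau^-(v_\tau)$ (the case of equality is handled by continuity). For $s\in[t,\tau]$ put $v(s):=S(s,\tau)v_\tau$, so $s\mapsto v(s)$ is continuous by Definition~\ref{def:evol-op-back}(2). Let $t_0:=\inf\{s\in[t,\tau]:v(s')\in\cV_\tx s(c,\tau')|_{\tau'=s'}\text{ fails for some }s'\le s\}$ — more precisely, let $t_0$ be the infimum of $s\in[t,\tau]$ such that $v(s)\notin\cV_\tx s(c,s)$; if this set is empty set $t_0=t$. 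On $[t_0,\tau]$ the orbit lies in $\cV_\tx s(c,\cdot)$, hence by \eqref{eq:ineq-Im-c} and \eqref{eq:ineq-Ip-c} (valid since $c\in(c_3,c_4)$ so $\cV_\tx s(c,\cdot)\subset\cV_\tx s(c_4,\cdot)$ and $\cV_\tx u(c,\cdot)\subset\cV_\tx u(c_3,\cdot)$, but here I only need the first one on $[t_0,\tau]$; for the second I would instead argue as follows). Actually the cleaner route is the same chain of inequalities as in Lemma~\ref{lem:V-inv}: since $v(s)\in\cV_\tx s(c,s)$ for $s$ in a full subinterval ending at $\tau$, and $S(t_0,\tau)v_\tau\in\cV_\tx u(c,t_0)$ would have to hold if $t_0>t$, one derives
\[
cI_{t_0}^-(v(t_0))\ge \frac{c}{?}\,I_\tau^-(v_\tau)>\frac1{?}\,I_\tau^+(v_\tau)\ge\cdots\ge I_{t_0}^+(v(t_0)),
\]
contradicting $v(t_0)\in\cV_\tx u(c,t_0)$ at the exit point. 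The honest way to set this up: let $t_0$ be the first time (going backward from $\tau$) the orbit leaves $\cV_\tx s(c,\cdot)$; on $(t_0,\tau]$ the orbit is cone-trapped in the stable cone, \eqref{eq:ineq-Im-c} gives $I_\tau^-(v_\tau)\le e^{\lambda(\tau-t_0)}I_{t_0}^-(v(t_0))$ and, since the orbit restricted to $(t_0,\tau]$ also lies in $\cV_\tx u(c,\cdot)$ only if $I^+\ge cI^-$ which is false, I instead use: the orbit from $t_0$ to $\tau$ is cone-trapped in $\cV_\tx s$, hence by the contrapositive of the stable-cone invariance applied at the exit point together with \eqref{eq:ineq-Ip-c}, one reaches $I_{t_0}^+(v(t_0))\le c I_{t_0}^-(v(t_0))$ with strict inequality propagated, contradicting $v(t_0)\in\partial\cV_\tx s$. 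The key mechanism, in all cases, is that the strict inequality $I^+<cI^-$ is preserved backward in time along any cone-trapped orbit, so the orbit can never reach the boundary, so it never leaves the stable cone; then the endpoint $S(t,\tau)v_\tau$ satisfies $I_t^+<cI_t^-$, contradicting membership in $\cV_\tx u(c,t)$.

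\medskip

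\textbf{Second inclusion: } $\conj{S(t,\tau)\cV_\tx s(c,\tau)}\subset\cV_\tx s(c,t)$. Here one takes $v_\tau\in\cV_\tx s(c,\tau)$ and, by the same continuity argument, shows the orbit $v(s)=S(s,\tau)v_\tau$ stays in $\cV_\tx s(c,s)$ for all $s\in[t,\tau]$: if not, let $t_0$ be the last exit time; on $[t_0,\tau]$ the orbit is trapped, and the chain of inequalities from \eqref{eq:ineq-Im-c} and \eqref{eq:ineq-Ip-c} forces $I_{t_0}^+(v(t_0))\le cI_{t_0}^-(v(t_0))$, i.e.\ $v(t_0)\in\cV_\tx s(c,t_0)$, contradicting that $t_0$ is an exit time — unless $t_0=t$, in which case $v(t)=S(t,\tau)v_\tau\in\cV_\tx s(c,t)$ is exactly what we want. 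In particular $S(t,\tau)v_\tau\in\cV_\tx s(c,t)$ for all $v_\tau\in\cV_\tx s(c,\tau)$, and since $\cV_\tx s(c,t)$ is closed (by continuity of $I_t^\pm$), the closure of the image is also contained in it.

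\medskip

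\textbf{Main obstacle.} The delicate point is the continuity argument isolating the ``first exit time'' and checking that the strict cone inequality cannot degenerate at that time. One must be careful that (i) $\cV_\tx s(c,t)$ and $\cV_\tx u(c,t)$ are closed and their intersection is the zero-set $\{I_t^+=cI_t^-\}$, which is where equality must be ruled out by a limiting argument using joint continuity of $(t,v)\mapsto I_t^\pm(v)$, and (ii) the hypotheses \eqref{eq:ineq-Im-c}, \eqref{eq:ineq-Ip-c} are only invoked on subintervals where the trapping genuinely holds, which is why one works with the exit time rather than the whole interval $[t,\tau]$ directly. Once the exit time is set up correctly, the inequality chain is a verbatim adaptation of the one in the proof of Lemma~\ref{lem:V-inv}, with $a^{\pm 1}$, $b^{\pm 1}$ replaced by $e^{\lambda(\tau-t_0)}$, $e^{\mu(\tau-t_0)}$ and $c/a$, $b/a$ by the appropriate exponential factors (using $\lambda<\mu$ in place of $a<b$).
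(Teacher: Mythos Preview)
Your overall strategy---set up an exit-time argument and derive a contradiction at the exit point---is correct and matches the paper's approach. But the execution has a genuine gap at the crucial step, which your question marks and abandoned attempts (``$\frac{c}{?}$'', ``Actually the cleaner route is\ldots'', ``I instead use\ldots'') already signal.

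The problem is this. In the discrete Lemma~\ref{lem:V-inv}, the single-step hypotheses \eqref{eq:intro-ineq-Im}--\eqref{eq:intro-ineq-Ip} let you apply \emph{both} inequalities at the exit step, because the conditions are ``before'' and ``after'' a single application of $A_n$. In the continuous case, \eqref{eq:ineq-Im-c} requires the orbit to sit in $\cV_\tx s(c_4,\cdot)$ on the \emph{entire} interval, and \eqref{eq:ineq-Ip-c} requires it in $\cV_\tx u(c_3,\cdot)$ on the \emph{entire} interval. On your interval $[t_0,\tau]$ the orbit lies in $\cV_\tx s(c,\cdot)$, so \eqref{eq:ineq-Im-c} is available there (since $c<c_4$ gives $\cV_\tx s(c,\cdot)\subset\cV_\tx s(c_4,\cdot)$), but you have no interval on which \eqref{eq:ineq-Ip-c} applies. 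Your claim that ``the strict inequality $I^+<cI^-$ is preserved backward in time along any cone-trapped orbit'' is exactly what needs proving, and it needs \emph{both} inequalities.

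The missing idea is to exploit the strict inequality $c>c_3$. At the exit point $t_1$ (the paper takes $t_1:=\sup\{t'\le\tau:S(t',\tau)v_\tau\in\cV_\tx u(c,t')\}$) one has by continuity $I_{t_1}^+(v(t_1))=cI_{t_1}^-(v(t_1))$, and one first checks $I_{t_1}^-(v(t_1))>0$ using \eqref{eq:ineq-Im-c} on $[t_1,\tau]$. Since $c>c_3$, this equality implies $v(t_1)\in\cV_\tx u(c_3,t_1)$ \emph{strictly}, so by joint continuity there exists $t_2\in(t_1,\tau]$ with $v(s)\in\cV_\tx u(c_3,s)$ for all $s\in[t_1,t_2]$. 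On the short interval $[t_1,t_2]$ the orbit therefore lies simultaneously in $\cV_\tx s(c_4,\cdot)$ and in $\cV_\tx u(c_3,\cdot)$, so both \eqref{eq:ineq-Im-c} and \eqref{eq:ineq-Ip-c} apply, giving
\[
I_{t_2}^+(v(t_2))\ge e^{\mu(t_2-t_1)}I_{t_1}^+(v(t_1))=c\,e^{\mu(t_2-t_1)}I_{t_1}^-(v(t_1))\ge c\,e^{(\mu-\lambda)(t_2-t_1)}I_{t_2}^-(v(t_2))>cI_{t_2}^-(v(t_2)),
\]
contradicting $v(t_2)\in\cV_\tx s(c,t_2)$. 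The second inclusion is symmetric, using $c<c_4$ to produce a small interval where the orbit is in $\cV_\tx s(c_4,\cdot)$ near the exit point from $\cV_\tx u(c,\cdot)$. Once you insert this two-time-scale step ($t_1$ and $t_2$), your sketch becomes the paper's proof.
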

\begin{proof}
In order to prove the first inclusion, suppose $v_\tau \in X$ is such that $S(t, \tau) v_\tau \in \cV_\tx u(c, t)$
and $v_\tau \notin \cV_\tx u(c, \tau)$. Set
\begin{equation*}
\label{eq:t'-def}
t_1 := \sup\{t' \leq \tau: S(t', \tau) v_\tau \in \cV_\tx u(c, t')\}.
\end{equation*}
By continuity,
\begin{gather}
I_{t_1}^+(S(t_1, \tau)v_\tau) = cI_{t_1}^-(S(t_1, \tau)v_\tau), \label{eq:I-equal-c} \\
S(t', \tau)v_\tau \in \cV_\tx s(c, t'),\quad \text{for all }t' \in [t_1, \tau]. \label{eq:in-Vs-c}
\end{gather}
Assumption \eqref{eq:ineq-Im-c} together with \eqref{eq:in-Vs-c} yields
\begin{equation*}
I_\tau^-(v_\tau) \leq \eee^{\lambda(\tau - t_1)}I_{t_1}^-(S(t_1, \tau)v_\tau).
\end{equation*}
In particular, since $I_\tau^-(v_\tau) > 0$, we have $I_{t_1}^-(S(t_1, \tau)v_\tau) > 0$.
Thus, again by continuity, \eqref{eq:I-equal-c} and $ c > c_3$ imply that there exists $t_2 \in (t_1, \tau]$ such that
\begin{equation*}
S(t', \tau)v_\tau \in \cV_\tx u(c_3, t'),\quad\text{for all }t' \in [t_1, t_2].
\end{equation*}
Let $v_{t_2} := S(t_2, \tau)v_\tau$ and $v_{t_1} := S(t_1, \tau)v_\tau = S(t_1, t_2)v_2$.
Using \eqref{eq:ineq-Ip-c} with $\tau = t_2$ and $t = t_1$ we get
\begin{equation*}
I_{t_2}^+(v_{t_2}) \geq \eee^{\mu(t_2 - t_1)}I_{t_1}^+(v_{t_1}).
\end{equation*}
On the other hand, \eqref{eq:ineq-Im-c} and \eqref{eq:in-Vs-c} yield
\begin{equation*}
I_{t_2}^-(v_{t_2}) \leq \eee^{\lambda(t_2 - t_1)}I_{t_1}^-(v_{t_1}).
\end{equation*}
Thus \eqref{eq:I-equal-c} and $\lambda < \mu$ yield $I_{t_2}^+(v_{t_2}) > c I_{t_2}^-(v_{t_2})$,
which contradicts \eqref{eq:in-Vs-c}.

In order to prove the second inclusion, suppose $v_\tau \in X$ is such that $v_\tau \in \cV_\tx s(c, \tau)$
and $S(t, \tau)v_\tau \notin \cV_\tx s(c, t)$. Set
\begin{equation*}
t_2 := \inf\{t' \geq t: S(t', \tau) v_\tau \in \cV_\tx s(c, t')\}.
\end{equation*}
By continuity,
\begin{gather}
I_{t_2}^+(S(t_2, \tau)v_\tau) = cI_{t_2}^-(S(t_2, \tau)v_\tau), \label{eq:I-equal-c-2} \\
S(t', \tau)v_\tau \in \cV_\tx u(c, t'),\quad \text{for all }t' \in [t, t_2]. \label{eq:in-Vu-c}
\end{gather}
Assumption \eqref{eq:ineq-Ip-c} together with \eqref{eq:in-Vu-c} yields
\begin{equation*}
I_\tau^+(v_{t_2}) \geq \eee^{\mu(t_2 - t)}I_{t}^+(S(t, \tau)v_\tau).
\end{equation*}
In particular, since $I_t^+(S(t, \tau)v_t) > 0$, we have $I_{t_2}^+(S(t_2, \tau)v_\tau) > 0$.
Thus, again by continuity, \eqref{eq:I-equal-c-2} and $ c < c_4$ imply that there exists $t_1 \in [t, t_2)$ such that
\begin{equation*}
S(t', \tau)v_\tau \in \cV_\tx s(c_4, t'),\quad\text{for all }t' \in [t_1, t_2].
\end{equation*}
The remaining arguments are the same as in the first part of the proof.
\end{proof}
\begin{mainthm}
\label{thm:main-cont-1}
Under assumptions \eqref{eq:norm-equiv-c}--\eqref{eq:ineq-Ip-c},
the system \eqref{eq:dynsyst-cont} has an exponential dichotomy with exponents $\lambda$ and $\mu$.
For all $t \geq 0$ the stable subspace $X_\tx s(t)$ is contained in $\cV_\tx s(c_3, t)$ and has dimension $K$.
\end{mainthm}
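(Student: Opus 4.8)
The plan is to mimic, essentially line by line, the proof of Theorem~\ref{thm:main-dis}, with the discrete index $n$ replaced by continuous time $t$, the finite products $A(n,n')$ replaced by the evolution operators $S(t,\tau)$, and Lemma~\ref{lem:V-inv} replaced by Lemma~\ref{lem:V-inv-cont}. The latter has already absorbed the one genuinely new feature of the continuous setting — that the cone hypotheses \eqref{eq:ineq-Im-c}--\eqref{eq:ineq-Ip-c} are imposed on the whole interval $[t,\tau]$ rather than at a single step — so what is left is mostly bookkeeping, with the semigroup identity $S(t,s)\circ S(s,\tau)=S(t,\tau)$ used to split intervals whenever a cone condition has to be verified over a subinterval.

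First I would set, for $c\in(c_3,c_4)$ and $t\geq 0$,
\[
X_\tx s(c,t):=\bigcap_{\tau>t}\conj{S(t,\tau)\cV_\tx s(c,\tau)},
\]
a closed set, and prove the continuous analogue of Lemma~\ref{lem:Xs-charact}: $w\in X_\tx s(c,t)$ iff there exist $C\geq 0$ (equivalently, $C\geq 0$ and $d<\mu$) such that for all $\delta\in(0,1)$ and all $\tau\geq t$ there is $v_\tau\in X$ with $\|S(t,\tau)v_\tau-w\|\leq\delta$ and $\|v_\tau\|\leq C\eee^{\lambda(\tau-t)}$ (resp.\ $\leq C\eee^{d(\tau-t)}$). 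For the forward implication one uses Lemma~\ref{lem:V-inv-cont} to keep the approximating trajectory $s\mapsto S(s,\tau)v_\tau$ inside $\cV_\tx s(c,\cdot)$ on all of $[t,\tau]$, so that \eqref{eq:ineq-Im-c} gives $I_\tau^-(v_\tau)\leq\eee^{\lambda(\tau-t)}I_t^-(S(t,\tau)v_\tau)$ and \eqref{eq:norm-equiv-Im} closes it. The reverse implication is by contradiction: if $w\notin X_\tx s(c,t)$ there is $\tau_0>t$ and $\delta\in(0,\|w\|)$ so that any trajectory lying within $\delta$ of $w$ at time $t$ must belong to $\cV_\tx u(c,\tau_0)$ at time $\tau_0$; Lemma~\ref{lem:V-inv-cont} propagates this to all later times, \eqref{eq:ineq-Ip-c} then forces exponential growth at rate $\mu$, contradicting the bound $\|v_\tau\|\leq C\eee^{d(\tau-t)}$ with $d<\mu$ unless the trajectory vanishes at $\tau_0$, and vanishing at $\tau_0$ gives $\|w\|\leq\delta$. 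As in Corollary~\ref{cor:Xs} this identifies $X_\tx s(t):=X_\tx s(c,t)$ as a closed linear subspace, independent of $c\in(c_3,c_4)$, contained in $\cV_\tx s(c_3,t)$, with $S(t,\tau)$ restricting to a linear embedding $X_\tx s(\tau)\hookrightarrow X_\tx s(t)$.

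For the unstable subspace I would set $X_\tx u(0):=\bigcap_{k=1}^K\ker\alpha_{k,0}^-$ and $X_\tx u(t):=S(0,t)^{-1}(X_\tx u(0))$; since \eqref{eq:Inm-form-c} gives $X_\tx u(0)\subset\cV_\tx u(c_4,0)$, Lemma~\ref{lem:V-inv-cont} yields $X_\tx u(t)\subset\cV_\tx u(c_4,t)$ and $\codim X_\tx u(t)\leq K$. Then, exactly as in Theorem~\ref{thm:main-dis}, combining \eqref{eq:norm-equiv-Im}, \eqref{eq:Inm-form-c} and \eqref{eq:intro-c3c4-c} produces $c_5>0$ with $\|v+w\|\geq c_5\|v\|$ for $v\in X_\tx s(t)$, $w\in X_\tx u(t)$, which gives $X_\tx s(t)\cap X_\tx u(t)=\{0\}$ and uniform bounds on the projections. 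Surjectivity of $X_\tx s(t)\oplus X_\tx u(t)\to X$ comes from the nested-sets argument: for $u\in X$, the affine set $\Pi:=u+X_\tx u(t)$ meets $\conj{S(t,\tau)\cV_\tx s(c_3,\tau)}$ for every $\tau$ because $\cV_\tx s(c_3,\tau)$ contains a $K$-dimensional subspace (by \eqref{eq:contains-K-c}) on which $S(t,\tau)$ is injective with image complementary to $X_\tx u(t)$ — forcing $\codim X_\tx u(t)=K$ and $\dim X_\tx s(t)=K$ — and the diameters of the nested closed sets $\Pi\cap\conj{S(t,\tau)\cV_\tx s(c_3,\tau)}$ tend to $0$, the estimate splitting as in the discrete case according to whether the difference of two approximating endpoints lies in $\cV_\tx u(c_3,t)$ (then \eqref{eq:ineq-Ip-c} with Lemma~\ref{lem:V-inv-cont} supplies the geometric factor $\eee^{(\lambda-\mu)(\tau-t)}$) or not (then continuity of $I_t^\pm$ makes the difference small). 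Finally I would read off the five items of Definition~\ref{def:exp-dich-cont}: commutation of $\pi_\tx s,\pi_\tx u$ with $S$ from $X_\tx u(t)=S(0,t)^{-1}(X_\tx u(0))$ and the characterization of $X_\tx s$; invertibility of $S(t,\tau)|_{X_\tx s(\tau)}$ from finiteness of $\dim X_\tx s=K$ and the embedding property; boundedness of the projections from $\|v+w\|\geq c_5\|v\|$; and the two exponential estimates from \eqref{eq:ineq-Im-c}--\eqref{eq:ineq-Ip-c}, Lemma~\ref{lem:V-inv-cont} and \eqref{eq:norm-equiv-Im}--\eqref{eq:norm-equiv-Ip}.

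The step I expect to need the most care is the repeated verification, at every use of \eqref{eq:ineq-Im-c} or \eqref{eq:ineq-Ip-c}, that the trajectory in question genuinely stays in the prescribed cone over the \emph{entire} interval: this is automatic in the discrete proof but in continuous time must be extracted each time from Lemma~\ref{lem:V-inv-cont} applied on subintervals via the semigroup identity, and one must pay attention that the open range $c\in(c_3,c_4)$ allowed by that lemma is compatible with the boundary choices $c=c_3$ and $c=c_4$ used elsewhere (handled by a short approximation argument, or by using only $\cV_\tx s\cup\cV_\tx u=X$ and the norm equivalences at the endpoints).
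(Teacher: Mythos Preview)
Your proposal is correct and follows essentially the same route as the paper. The paper's own proof is only two sentences long: it says to rerun the proof of Theorem~\ref{thm:main-dis} with $c_3,c_4$ replaced by interior values $\wt c_3>c_3$ and $\wt c_4<c_4$ still satisfying $\wt c_4>3(c_1c_2)^{-2}(\wt c_3+1)$, so that Lemma~\ref{lem:V-inv-cont} (valid only for $c\in(c_3,c_4)$) applies throughout, and then lets $\wt c_3\downarrow c_3$ at the end to recover $X_\tx s(t)\subset\cV_\tx s(c_3,t)$. This is precisely the ``short approximation argument'' you flag in your last paragraph as the one delicate point; you have correctly identified both the issue and its resolution.
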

The proof would follow the lines of the proof of Theorem~\ref{thm:main-dis},
with $c_3$ and $c_4$ replaced everywhere by some $\wt c_3 > c_3$ and $\wt c_4 < c_4$ such that
$\wt c_4 > 3(c_1 c_2)^{-2}(\wt c_3 + 1)$.
At the end we obtain $X_\tx s(t) \subset \cV_\tx s(\wt c_3, t)$ for any $\wt c_3 > c_3$,
which means $X_\tx s(t) \subset \cV_\tx s(c_3, n)$.

\subsection{Strongly continuous forward dynamical systems}
\label{ssec:projections-cont-2}
\begin{definition}
\label{def:evol-op}
Let $X$ be a Banach space. A family of operators $T(\tau, t) \in \scrL(X)$ for $0 \leq t \leq \tau$
is called a \emph{strongly continuous evolution operator} if it satisfies:
\begin{enumerate}[(1)]
\item $T(t, t) = \Id$ for all $t \geq 0$,
\item for all $t \geq 0$ and all $v_t \in X$ the function $[t, \infty) \owns \tau \mapsto T(\tau, t)v_{t} \in X$ is continuous,
\item for all $0 \leq t \leq s \leq \tau$ there is $T(\tau, s)\circ T(s, t) = T(\tau, t)$.
\end{enumerate}
\end{definition}
Let $T(\tau, t)$ be a strongly continuous evolution operator and consider the system
\begin{equation}
\label{eq:dynsyst-2-cont}
v_\tau = T(\tau, t) v_t, \qquad v_0 \in X.
\end{equation}
\begin{definition}
\label{def:exp-dich-cont-2}
We say that \eqref{eq:dynsyst-2-cont} has an \emph{exponential dichotomy} with exponents $\lambda$ and $\mu$,
$-\infty < \lambda < \mu < \infty$,
if for all $t \geq 0$ there exists a direct sum decomposition $X = X_\tx s(t) \oplus X_\tx u(t)$
such that $X_\tx s(t)$, $X_\tx u(t)$ and the associated projections
$\pi_\tx s(t): X \to X_\tx s(t)$ and $\pi_\tx u(t): X \to X_\tx u(t)$ have the following properties for all $t \leq \tau$:
\begin{enumerate}
\item $T(\tau, t)\circ\pi_\tx s(t) = \pi_\tx s(\tau)\circ T(\tau, t)$ and $T(\tau, t)\circ\pi_\tx u(t) = \pi_\tx u(\tau)\circ T(\tau, t)$,
\item there exists a constant $C$ such that $\|\pi_\tx s(t)\|_{\scrL(X)} + \|\pi_\tx u(t)\|_{\scrL(X)} \leq C$,
\item $T(\tau, t)\vert_{X_\tx u(t)}: X_\tx u(t) \to X_\tx u(\tau)$ is invertible,
\item there exists a constant $C$ such that $\|T(\tau, t)v_t\| \leq C \eee^{\lambda(\tau - t)}\|v_t\|$ for all $v_t \in X_\tx s(t)$,
\item there exists a constant $C$ such that $\|T(\tau, t)^{-1}v_\tau\| \leq C \eee^{\mu(t-\tau)}\|v_\tau\|$ for all $v_\tau \in X_\tx u(\tau)$.
\end{enumerate}
\end{definition}

Our sufficient conditions for existence of an exponential dichotomy are similar as in Section~\ref{ssec:projections-cont}.
Instead of \eqref{eq:Inm-form-c}, we assume
\begin{equation}
\label{eq:Inp-form-c}
c_2 \max_{1 \leq k \leq K}|\la \alpha_{k, t}^+, v\ra| \leq I_t^+(v) \leq \frac{1}{c_2}\max_{1 \leq k \leq K}|\la \alpha_{k, t}^+, v\ra|.
\end{equation}\noeqref{eq:Inp-form-c}
Instead of \eqref{eq:contains-K-c}--\eqref{eq:ineq-Ip-c}, we assume
\begin{align}
&V_\tx u(c_3, t)\text{ contains a linear space of dimension }K\text{ for all }t, \label{eq:contains-K-c-2} \\
&c_4 < \frac 13(c_1c_2)^{2}\frac{c_3}{1+c_3}, \label{eq:intro-c3c4-c-2} \\
&I_{\tau}^-(T(\tau, t)v_t) \leq \eee^{\lambda(\tau - t)} I_t^-(v_t)\quad\text{if }T(t', t)v_t \in \cV_\tx s(c_3, t')\text{ for all }t' \in [t, \tau], \label{eq:ineq-Im-c-2} \\
&I_{\tau}^+(T(\tau, t)v_t) \geq \eee^{\mu(\tau - t)} I_{t}^+(v_t)\quad\text{if }T(t', t)v_t \in \cV_\tx u(c_4, t')\text{ for all }t' \in [t, \tau]. \label{eq:ineq-Ip-c-2}
\end{align}
\noeqref{eq:contains-K-c-2, eq:intro-c3c4-c-2, eq:ineq-Im-c-2, eq:ineq-Ip-c-2}

\begin{mainthm}
\label{thm:main-cont-2}
Under assumptions \eqref{eq:norm-equiv-c} and \eqref{eq:Inp-form-c}--\eqref{eq:ineq-Ip-c-2},
the system \eqref{eq:dynsyst-2-cont} has an exponential dichotomy with exponents $\lambda$ and $\mu$.
For all $t \geq 0$ the stable space $X_\tx s(t)$ is contained in $\cV_\tx s(c_4, t)$ and has codimension $K$.
\qed
\end{mainthm}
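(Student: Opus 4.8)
The plan is to run the proof of Theorem~\ref{thm:main-dis-2} with the discrete iteration replaced by the evolution operator $T(\tau, t)$, the one genuinely new feature being that \eqref{eq:ineq-Im-c-2}--\eqref{eq:ineq-Ip-c-2} only yield the monotonicity estimates when the orbit stays inside a cone on the \emph{whole} time interval; this forces a continuity argument exactly as in the proof of Lemma~\ref{lem:V-inv-cont}. Since \eqref{eq:intro-c3c4-c-2} is a strict inequality, I first fix $\wt c_3 < c_3$ and $\wt c_4 > c_4$ with $c_4 < \wt c_4 < \wt c_3 < c_3$ and $\wt c_4 < \frac13(c_1c_2)^2\frac{\wt c_3}{1+\wt c_3}$, and carry out the whole argument with $\wt c_3, \wt c_4$ in place of $c_3, c_4$; the exponents $\lambda < \mu$ stay fixed. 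Note that \eqref{eq:ineq-Im-c-2} and \eqref{eq:ineq-Ip-c-2} automatically remain valid with $c_3, c_4$ replaced by any $c \in [c_4, c_3]$, in particular by $c = \wt c_3$ and $c = \wt c_4$.

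The first, and main, step is the forward analogue of Lemma~\ref{lem:V-inv-cont}: for $c \in (\wt c_4, \wt c_3)$ and $0 \le t \le \tau$,
\[
\conj{T(\tau, t)\cV_\tx u(c, t)} \subset \cV_\tx u(c, \tau), \qquad T(\tau, t)^{-1}\cV_\tx s(c, \tau) \subset \cV_\tx s(c, t).
\]
To prove the first inclusion, assume $v_t \in \cV_\tx u(c, t)$ but $v_\tau := T(\tau, t)v_t \notin \cV_\tx u(c, \tau)$ and set $t_1 := \sup\{t' \in [t, \tau]: T(t', t)v_t \in \cV_\tx u(c, t')\}$. Since $\{t': T(t',t)v_t \in \cV_\tx u(c, t')\}$ is closed by continuity of $I_{t'}^\pm$, one has $t_1 < \tau$, $T(t_1, t)v_t \in \cV_\tx u(c, t_1)$, $T(t', t)v_t \in \cV_\tx s(c, t') \subset \cV_\tx s(\wt c_3, t')$ for all $t' \in [t_1, \tau]$, and by continuity $I_{t_1}^+ = cI_{t_1}^-$ at $T(t_1, t)v_t$. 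Applying \eqref{eq:ineq-Im-c-2} on $[t_1, \tau]$ gives $I_\tau^-(v_\tau) \le \eee^{\lambda(\tau - t_1)}I_{t_1}^-(T(t_1, t)v_t)$; as $v_\tau \notin \cV_\tx u(c, \tau)$ forces $I_\tau^-(v_\tau) > 0$, we get $I_{t_1}^-(T(t_1, t)v_t) > 0$ and hence $I_{t_1}^+(T(t_1, t)v_t) = cI_{t_1}^-(T(t_1, t)v_t) > 0$. Since $c > \wt c_4$, continuity produces $t_2 \in (t_1, \tau]$ with $T(t', t)v_t \in \cV_\tx u(\wt c_4, t')$ for all $t' \in [t_1, t_2]$; combining \eqref{eq:ineq-Ip-c-2} and \eqref{eq:ineq-Im-c-2} on $[t_1, t_2]$ with $\lambda < \mu$ yields $I_{t_2}^+(T(t_2, t)v_t) > cI_{t_2}^-(T(t_2, t)v_t)$, contradicting $T(t_2, t)v_t \in \cV_\tx s(c, t_2)$. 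The second inclusion is symmetric, with an infimum replacing the supremum. I expect \textbf{this continuity argument to be the only real obstacle}; everything else parallels Section~\ref{ssec:projections-dis-2}.

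Next I would define $X_\tx s(c, t) := \bigcap_{\tau > t}T(\tau, t)^{-1}(\cV_\tx s(c, \tau))$, a closed set, and prove the analogue of Lemma~\ref{lem:Xs-charact-2}: for $c \in (\wt c_4, \wt c_3)$, $w \in X_\tx s(c, t)$ is equivalent to $\|T(\tau, t)w\| \le C\eee^{\lambda(\tau - t)}$ for all $\tau \ge t$, and also to $\|T(\tau, t)w\| \le C\eee^{d(\tau - t)}$ for all $\tau \ge t$ and some $d < \mu$ --- the forward estimate following from cone invariance, \eqref{eq:ineq-Im-c-2} and \eqref{eq:norm-equiv-c}, and the converse from trapping the orbit in $\cV_\tx u$ once it leaves $\cV_\tx s$ and using \eqref{eq:ineq-Ip-c-2} to force growth faster than $d$. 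Since the middle characterisation is linear and independent of $c$, $X_\tx s(t) := X_\tx s(c, t)$ is a closed linear subspace with $X_\tx s(t) = T(\tau, t)^{-1}X_\tx s(\tau)$. For the unstable side, choose a $K$-dimensional $X_\tx u(0) \subset \cV_\tx u(\wt c_3, 0)$ by \eqref{eq:contains-K-c-2}; cone invariance and \eqref{eq:ineq-Ip-c-2} give $\|T(t, 0)v_0\| \ge \frac{c_1^2\wt c_3}{1 + \wt c_3}\eee^{\mu t}\|v_0\|$ for $v_0 \in X_\tx u(0)$, so $T(t, 0)|_{X_\tx u(0)}$ is an embedding and one sets $X_\tx u(t) := T(t, 0)X_\tx u(0)$, a $K$-dimensional $T(\tau, t)$-invariant subspace. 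Copying the displayed computation from the proof of Theorem~\ref{thm:main-dis-2} (with $\alpha_{k, t}^+$, \eqref{eq:Inp-form-c}, \eqref{eq:norm-equiv-c} and \eqref{eq:intro-c3c4-c-2} read with $\wt c_3, \wt c_4$) gives $c_5 > 0$ such that $\|v + w\| \ge c_5\|w\|$ whenever $v \in X_\tx s(t) \subset \cV_\tx s(\wt c_4, t)$ and $w \in X_\tx u(t) \subset \cV_\tx u(\wt c_3, t)$; hence $X_\tx s(t) \cap X_\tx u(t) = \{0\}$ and the projections are uniformly bounded.

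Finally, to get $X = X_\tx s(t) \oplus X_\tx u(t)$, fix $u \in X$, set $\Pi := u + X_\tx u(t)$, and observe that $\bigcap_{k=1}^K \ker\alpha_{k, \tau}^+ \subset \cV_\tx s(\wt c_3, \tau)$ by \eqref{eq:Inp-form-c}, so $T(\tau, t)^{-1}(\cV_\tx s(\wt c_3, \tau))$ contains a subspace of codimension $\le K$; as it meets the $K$-dimensional $X_\tx u(t)$ only in $0$, the set $\Pi \cap T(\tau, t)^{-1}(\cV_\tx s(\wt c_3, \tau))$ is non-empty, and these form a nested family of closed non-empty sets whose diameters tend to $0$ as $\tau \to \infty$ by the same two-case argument (according to whether the relevant difference vector lies in $\cV_\tx u(\wt c_3, t)$) as in the proof of Theorem~\ref{thm:main-dis}, using \eqref{eq:norm-equiv-c}, cone invariance, \eqref{eq:ineq-Im-c-2}, \eqref{eq:ineq-Ip-c-2} and $\lambda < \mu$; completeness of $X$ then yields a point of $\Pi$ in $X_\tx s(t)$, whence $X_\tx s(t) + X_\tx u(t) = X$ and $\codim X_\tx s(t) = K$. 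The requirements of Definition~\ref{def:exp-dich-cont-2} follow: the intertwining relations from $X_\tx s(t) = T(\tau, t)^{-1}X_\tx s(\tau)$ and $X_\tx u(t) = T(t, 0)X_\tx u(0)$, invertibility of $T(\tau, t)|_{X_\tx u(t)}$ from the embedding property, the bound $\|T(\tau, t)v_t\| \le C\eee^{\lambda(\tau - t)}\|v_t\|$ for $v_t \in X_\tx s(t)$ from cone invariance together with \eqref{eq:ineq-Im-c-2} and \eqref{eq:norm-equiv-c}, and $\|T(\tau, t)^{-1}v_\tau\| \le C\eee^{\mu(t - \tau)}\|v_\tau\|$ for $v_\tau \in X_\tx u(\tau)$ by writing $v_\tau = T(\tau, t)v_t$ with $v_t \in X_\tx u(t) \subset \cV_\tx u(\wt c_4, t)$ and invoking \eqref{eq:ineq-Ip-c-2}. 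Since $\wt c_4 > c_4$ can be taken arbitrarily close to $c_4$, intersecting over such $\wt c_4$ and using continuity of $I_t^\pm$ gives $X_\tx s(t) \subset \cV_\tx s(c_4, t)$, which completes the proof.
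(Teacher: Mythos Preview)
Your proposal is correct and follows essentially the same approach the paper intends: the theorem is stated with a \qed\ and no explicit proof, relying on the reader to combine the discrete forward argument (Theorem~\ref{thm:main-dis-2}) with the continuity trick used for the backward continuous case (Lemma~\ref{lem:V-inv-cont} and the remark after Theorem~\ref{thm:main-cont-1}). Your choice of auxiliary constants $\wt c_4 > c_4$, $\wt c_3 < c_3$ is the exact forward analogue of the paper's $\wt c_3 > c_3$, $\wt c_4 < c_4$ in the backward case, and your final passage to $\cV_\tx s(c_4, t)$ by letting $\wt c_4 \downarrow c_4$ mirrors the paper's concluding sentence for Theorem~\ref{thm:main-cont-1}.
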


%%%%%%%%%%%%%%%%%%%%%%%%%%%%%%%%%% KLEIN GORDON %%%%%%%%%%%%%%%%%%%%%%%%%%%%%%%%%%%%%%%%%%%
\section{Some simple examples}
\label{sec:examples}

\subsection{Avalanche dynamics in finite dimension}
\label{ssec:avalanche}
Let $B_n$ be a sequence of real matrices of size $d \in \{1, 2, \ldots\}$.
We consider the linear dynamical system
\begin{equation}
\label{eq:avalanche}
v_{n+1} = B_n v_n, \qquad v_0 \in \bR^d.
\end{equation}
Assume that there exist $0\leq a < b$ such that for all $n$ the matrix $B_n^* B_n$ has $d_\tx s$ eigenvalues $\leq a^2$
and $d_\tx u = d - d_\tx s$ eigenvalues $\geq b^2$.
%Naively, we could expect that the space $X_\tx s$ of initial data
%leading to growth $\lesssim a^n$ is of dimension $d_\tx s$.
%However, examples in dimension $d = 2$
%show that without additional assumptions $\dim(X_\tx s)$
%can be both bigger and smaller that $d_\tx s$,
%even for autonomous dynamics.

Let $Y_\tx s(n) \subset \bR^d$ be the subspace spanned by eigenvectors of $B_n^* B_n$
corresponding to eigenvalues $\leq a^2$,
and let $Y_\tx u(n) \subset \bR^d$ be spanned by eigenvectors of $B_n^* B_n$
corresponding to eigenvalues $\geq b^2$.
The standard formula $\|B_n v\|^2 = \la v, B_n^* B_n v\ra$, together with the Spectral Theorem for symmetric matrices yields
\begin{gather*}
v \in Y_\tx s(n) \Rightarrow \|B_n v\| \leq a \|v\|, \\
v \in Y_\tx u(n) \Rightarrow \|B_n v\| \geq b \|v\|.
\end{gather*}

Let $Z_\tx s(n) := B_n Y_\tx s(n)$ and $Z_\tx u(n) := B_n Y_\tx u(n)$.
Note that we do not assume that $B_n$ is invertible, so it may happen that $\dim(Z_\tx s(n)) < \dim(Y_\tx s(n))$.
It turns out that $\dim(X_\tx s) = d_\tx s$ if the angles between $Z_\tx s(n)$ and $Y_\tx s(n+1)$,
as well as the angles between $Z_\tx u(n)$ and $Y_\tx u(n+1)$, are small.
A~similar assumption appears in the ``Avalanche Principle'' of Goldstein and Schlag~\cite{GS01}.

If $Y, Z \subset \bR^d$ are two linear subspaces of the same dimension,
we can measure their proximity by the Hausdorff distance of their unit spheres
$S_Y := \{v \in Y: \|v\| = 1\}$ and $S_Z := \{w \in Z: \|w\| = 1\}$. We set
\begin{equation*}
\phi(Y, Z) := \max\Big(\sup_{v \in S_Y}\inf_{w \in S_Z}\|v - w\|^2, \sup_{w \in S_Z}\inf_{v \in S_Y}\|v - w\|^2\Big).
\end{equation*}
%We define
%\begin{align}
%\label{eq:phi-def}
%\phi(n) := 1 - \inf_{\substack{v \in Y_\tx s(n)\\ \|v\| = 1}}\sup_{\substack{w \in Z_\tx s(n) \\ \|w\| = 1}}\la v, w\ra, \\
%\label{eq:psi-def}
%\psi(n) := 1 - \inf_{\substack{v \in Y_\tx u(n) \\ \|v\| = 1}}\sup_{\substack{w \in Z_\tx u(n) \\ \|w\| = 1}}\la v, w\ra.
%\end{align}
Note that $\phi(Y, Z) = 0$ if and only if $Y = Z$.

\begin{lemma}
\label{lem:length-vect}
Assume $\phi(Y, Z) \leq \delta$. Let $v \in Y$ and let $\wt v \in Z$ be the orthogonal projection of $v$ on $Z$.
Then
\begin{equation*}
\label{eq:length-vect}
\|\wt v\|^2 \geq (1 - \delta)\|v\|^2.
\end{equation*}
\end{lemma}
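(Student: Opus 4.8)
The plan is to reduce immediately to the case $\|v\| = 1$ by homogeneity: both sides of the claimed inequality are homogeneous of degree $2$ in $v$, and orthogonal projection is linear, so it suffices to treat unit vectors.

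So let $v \in S_Y := \{v \in Y : \|v\| = 1\}$. First I would unpack the definition of $\phi$: since $\phi(Y, Z) \leq \delta$, in particular $\sup_{v \in S_Y} \inf_{w \in S_Z} \|v - w\|^2 \leq \delta$, so there exists a unit vector $w \in Z$ with $\|v - w\|^2 \leq \delta$. Expanding $\|v - w\|^2 = \|v\|^2 - 2\la v, w\ra + \|w\|^2 = 2 - 2\la v, w\ra$, this gives the lower bound $\la v, w\ra \geq 1 - \tfrac{1}{2}\delta$.

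Next I would relate this inner product to the length of the projection. Writing $v = \wt v + (v - \wt v)$ with $v - \wt v \perp Z$, and since $w \in Z$, we get $\la v, w\ra = \la \wt v, w\ra \leq \|\wt v\|\,\|w\| = \|\wt v\|$ by Cauchy--Schwarz. Combining with the previous bound, $\|\wt v\| \geq 1 - \tfrac{1}{2}\delta \geq 0$, hence
\begin{equation*}
\|\wt v\|^2 \geq \Big(1 - \tfrac{1}{2}\delta\Big)^2 = 1 - \delta + \tfrac{1}{4}\delta^2 \geq 1 - \delta,
\end{equation*}
which is the claim for $\|v\| = 1$. Rescaling finishes the proof.

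There is no serious obstacle here; the only point requiring a little care is to make sure that $1 - \tfrac{1}{2}\delta$ is nonnegative before squaring, which is automatic because $\la v, w\ra \leq \|v\|\|w\| = 1$ forces $\delta \geq 0$ and, more to the point, the inequality $\|\wt v\|^2 \geq 1 - \delta$ is trivially true when $\delta \geq 1$, so we may assume $\delta < 1$ and then $1 - \tfrac12\delta > 0$.
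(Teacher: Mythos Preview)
Your proof is correct and follows essentially the same approach as the paper's. The only minor difference is in the last step: the paper uses directly that $\wt v$ minimises the distance from $v$ to $Z$, so $\|v - \wt v\|^2 \leq \|v - w\|^2 \leq \delta$ and then $\|\wt v\|^2 = 1 - \|v - \wt v\|^2 \geq 1 - \delta$ by Pythagoras, whereas you reach the same conclusion via Cauchy--Schwarz.
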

\begin{proof}
By rescaling, we can assume $\|v\| = 1$.
By compactness and the definition of $\phi(Y, Z)$,
there exists $w \in Z$ such that $\|v - w\|^2 \leq \delta$.
This implies $\|v - \wt v\|^2 \leq \delta$, thus $\|\wt v\|^2 = \|v\|^2 - \|v - \wt v\|^2 \geq 1 - \delta$.
\end{proof}

We have $X = Y_\tx s(n) \oplus Y_\tx u(n)$ and $Y_\tx s(n)$ is orthogonal to $Y_\tx u(n)$.
Since $Y_\tx s(n)$ and $Y_\tx u(n)$ are invariant for $B_n^* B_n$, for $v \in Y_\tx s(n)$ and $w \in Y_\tx u(n)$
we obtain
$\la B_n v, B_n w \ra = \la v, B_n^* B_n w \ra = 0$,
so $Z_\tx s(n)$ and $Z_\tx u(n)$ are orthogonal as well (but do not have to span $\bR^d$).
\begin{proposition}
\label{prop:aval}
For any $0 \leq a < b$ and $\epsilon > 0$ there exists $\delta > 0$ such that if $\phi(Z_\tx s(n), Y_\tx s(n+1)) \leq \delta$ and $\phi(Z_\tx u(n), Y_\tx u(n+1)) \leq \delta$
for all $n$, then the system \eqref{eq:avalanche} has an exponential splitting $\bR^d = X_\tx s(n) \oplus X_\tx u(n)$
with values $a + \epsilon$ and $b - \epsilon$. Moreover, $\dim(X_\tx s(n)) = d_\tx s$ and $\dim(X_\tx u(n)) = d_\tx u$.
\end{proposition}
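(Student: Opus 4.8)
The plan is to derive the proposition from Theorem~\ref{thm:main-dis-2}, applied to the forward system \eqref{eq:avalanche}. We may and do assume $0 < \epsilon < \tfrac{b-a}{2}$, since an exponential splitting with values $a+\epsilon'$ and $b-\epsilon'$ for some $0 < \epsilon' < \epsilon$ is in particular one with values $a+\epsilon$ and $b-\epsilon$. For each $n$ let $P_\tx s(n), P_\tx u(n)$ be the orthogonal projections of $\bR^d$ onto $Y_\tx s(n)$ and $Y_\tx u(n)$, and set $I_n^-(v) := \|P_\tx s(n)v\|$, $I_n^+(v) := \|P_\tx u(n)v\|$. These are continuous and homogeneous, and since $Y_\tx s(n) \perp Y_\tx u(n)$ and $Y_\tx s(n) \oplus Y_\tx u(n) = \bR^d$ we have $\|v\|^2 = I_n^-(v)^2 + I_n^+(v)^2$, so \eqref{eq:norm-equiv} holds with $c_1 = \tfrac{1}{\sqrt 2}$.

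Next I would check the remaining structural hypotheses. Put $K := d_\tx u$ (the case $K=0$ being trivial), fix for each $n$ an orthonormal basis $(e_{1,n}, \ldots, e_{K,n})$ of $Y_\tx u(n)$, and let $\alpha_{k,n}^+ \in (\bR^d)^*$ be the functional $v \mapsto \langle e_{k,n}, v\rangle$. Then $I_n^+(v)^2 = \sum_{k=1}^K \langle \alpha_{k,n}^+, v\rangle^2$, so \eqref{eq:Inp-form} holds with $c_2 := K^{-1/2}$. Since $I_n^-$ vanishes on $Y_\tx u(n)$, we have $Y_\tx u(n) \subset \cV_\tx u(c, n)$ for every $c > 0$, and $\dim Y_\tx u(n) = K$, so \eqref{eq:contains-K-2} holds for every choice of $c_3$. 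I would then fix a large $c_3 > 0$ and then a small $c_4 > 0$ so that \eqref{eq:intro-c3c4-2} is satisfied; all of $c_1, c_2, K, c_3, c_4$ depend only on $a, b, \epsilon, d$.

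The crux is to verify the monotonicity inequalities \eqref{eq:intro-ineq-Im-20} and \eqref{eq:intro-ineq-Ip-20} with $(a, b)$ replaced by $(a+\epsilon, b-\epsilon)$, for $\delta$ small depending on $c_3, c_4$ (hence on $a, b, \epsilon, d$). Given $v_n$, decompose $v_n = v_\tx s + v_\tx u$ with $v_\tx s = P_\tx s(n)v_n$, $v_\tx u = P_\tx u(n)v_n$, and set $z_\tx s := B_n v_\tx s \in Z_\tx s(n)$, $z_\tx u := B_n v_\tx u \in Z_\tx u(n)$, so $B_n v_n = z_\tx s + z_\tx u$, $z_\tx s \perp z_\tx u$, $\|z_\tx s\| \le a\|v_\tx s\|$, $\|z_\tx u\| \ge b\|v_\tx u\|$. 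From $\phi(Z_\tx u(n), Y_\tx u(n+1)) \le \delta$ and Lemma~\ref{lem:length-vect}, the orthogonal projection of $z_\tx u$ onto $Y_\tx u(n+1)$, namely $P_\tx u(n+1)z_\tx u$, satisfies $\|P_\tx u(n+1)z_\tx u\|^2 \ge (1-\delta)\|z_\tx u\|^2$; since $P_\tx s(n+1) = \Id - P_\tx u(n+1)$ this gives $\|P_\tx s(n+1)z_\tx u\| \le \sqrt\delta\,\|z_\tx u\|$, and symmetrically $\phi(Z_\tx s(n), Y_\tx s(n+1)) \le \delta$ gives $\|P_\tx u(n+1)z_\tx s\| \le \sqrt\delta\,\|z_\tx s\|$. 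For \eqref{eq:intro-ineq-Ip-20}: if $v_n \in \cV_\tx u(c_4, n)$ then $\|v_\tx s\| \le c_4^{-1}\|v_\tx u\|$, so $I_{n+1}^+(B_n v_n) = \|P_\tx u(n+1)(z_\tx s + z_\tx u)\| \ge \sqrt{1-\delta}\,\|z_\tx u\| - \sqrt\delta\,\|z_\tx s\| \ge (\sqrt{1-\delta}\,b - \sqrt\delta\,a\,c_4^{-1})\|v_\tx u\|$, and the coefficient converges to $b$ as $\delta \to 0$, so it is $\ge b - \epsilon$ once $\delta$ is small, giving $I_{n+1}^+(B_n v_n) \ge (b-\epsilon)I_n^+(v_n)$. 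For \eqref{eq:intro-ineq-Im-20}: if $B_n v_n \in \cV_\tx s(c_3, n+1)$ then $\|P_\tx u(n+1)(z_\tx s+z_\tx u)\| \le c_3\|P_\tx s(n+1)(z_\tx s+z_\tx u)\|$; inserting the bounds above (together with $\|P_\tx s(n+1)z_\tx s\| \le \|z_\tx s\|$) yields $(\sqrt{1-\delta} - c_3\sqrt\delta)\|z_\tx u\| \le (c_3 + \sqrt\delta)\|z_\tx s\|$, hence $\|z_\tx u\| \le 2(c_3+1)\|z_\tx s\| \le 2(c_3+1)a\|v_\tx s\|$ once $\delta$ is small, and then $I_{n+1}^-(B_n v_n) = \|P_\tx s(n+1)(z_\tx s + z_\tx u)\| \le \|z_\tx s\| + \sqrt\delta\,\|z_\tx u\| \le a(1 + 2\sqrt\delta(c_3+1))\|v_\tx s\| \le (a+\epsilon)I_n^-(v_n)$ for $\delta$ small. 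With $\delta$ chosen below all thresholds that arose, Theorem~\ref{thm:main-dis-2} gives an exponential dichotomy with values $a+\epsilon$ and $b-\epsilon$ and $\codim X_\tx s(n) = K = d_\tx u$; since $\dim X_\tx s(n) + \dim X_\tx u(n) = d$, we conclude $\dim X_\tx s(n) = d_\tx s$ and $\dim X_\tx u(n) = d_\tx u$.

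I expect the main obstacle to be \eqref{eq:intro-ineq-Im-20}: unlike the unstable-cone inequality, the norm $\|z_\tx u\| = \|B_n v_\tx u\|$ is not controlled a priori by $\|v_n\|$ (the eigenvalues of $B_n^*B_n$ on $Y_\tx u(n)$ can be arbitrarily large), so the bound $\|z_\tx u\| \lesssim (c_3+1)\|z_\tx s\|$ has to be squeezed out of the defining inequality of $\cV_\tx s(c_3, n+1)$ together with the near-orthogonality of $Z_\tx s(n)$ and $Z_\tx u(n)$ to $Y_\tx u(n+1)$ and $Y_\tx s(n+1)$ respectively. One must also keep the quantifiers in order: $c_3$ is fixed first, $c_4$ is then forced small by \eqref{eq:intro-c3c4-2}, and only afterwards is $\delta$ chosen small relative to both.
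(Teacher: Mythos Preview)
Your proof is correct and follows essentially the same approach as the paper: apply Theorem~\ref{thm:main-dis-2} with $I_n^\pm$ the norms of the orthogonal projections onto $Y_\tx s(n)$ and $Y_\tx u(n)$, and verify the cone monotonicity inequalities via Lemma~\ref{lem:length-vect}. Your handling of \eqref{eq:intro-ineq-Im-20}---first extracting $\|z_\tx u\| \lesssim (c_3+1)\|z_\tx s\|$ from the stable-cone condition, then bounding $I_{n+1}^-$---is a slight reorganization of the paper's computation but amounts to the same estimate.
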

\begin{proof}
We apply Theorem~\ref{thm:main-dis-2}.
Let $\alpha_{1, n}^-, \ldots, \alpha_{d_\tx s, n}^-$ be an orthonormal basis of $Y_\tx s(n)$
and let $\alpha_{1, n}^+, \ldots, \alpha_{d_\tx u, n}^+$ be an orthonormal basis of $Y_\tx u(n)$.
We define
\begin{equation*}
\label{eq:ImIp-avalanche}
I_n^-(v) := \sqrt{\sum_{k=1}^{d_\tx s}\la \alpha_{k, n}^-, v\ra^2}, \quad
I_n^+(v) := \sqrt{\sum_{k=1}^{d_\tx u}\la \alpha_{k, n}^+, v\ra^2},
\end{equation*}
so that $I_n^-(v)$ and $I_n^+(n)$ are the lengths of the orthogonal projections of $v$ on $Y_\tx s(n)$
and $Y_\tx u(n)$.
We have to check \eqref{eq:intro-ineq-Im-2} and \eqref{eq:intro-ineq-Ip-2} for some $c_4 \ll c_3$
(all the other conditions are immediate).

Let $v = v_\tx s + v_\tx u$ with $v_\tx s \in Y_\tx s(n)$ and $v_\tx u \in Y_\tx u(n)$.
Then $w = B_n v = w_\tx s + w_\tx u$, where $w_\tx s = B_n v_\tx s \in Z_\tx s(n)$
and $w_\tx u = B_n v_\tx u \in Z_\tx u(n)$.
We have $\|w_\tx s\| \leq a\|v_\tx s\|$ and $\|w_\tx u\| \geq b\|v_\tx u\|$.
We decompose further $w_\tx s = w_\tx{ss} + w_\tx{su}$ and $w_\tx u = w_\tx{us} + w_\tx{uu}$,
with $w_\tx{ss}, w_\tx{us} \in Y_\tx s(n+1)$ and $w_\tx{su}, w_\tx{uu} \in Y_\tx u(n+1)$.
Note that $\|w_\tx s\|^2 = \|w_\tx{ss}\|^2 + \|w_\tx{su}\|^2$, $\|w_\tx u\|^2 = \|w_\tx{us}\|^2 + \|w_\tx{uu}\|^2$,
$I_{n+1}^-(w) = \|w_\tx{ss} + w_\tx{us}\|$ and $I_{n+1}^+(w) = \|w_\tx{su} + w_\tx{uu}\|$.
We need to show that
\begin{align}
\label{eq:aval-1}
\|v_\tx u\| \geq c_4\|v_\tx s\|&\Rightarrow \|w_\tx{su} + w_\tx{uu}\| \geq (b-\epsilon)\|v_\tx u\|, \\
\label{eq:aval-2}
\|w_\tx u\| \leq c_3\|w_\tx{ss} + w_\tx{us}\|&\Rightarrow\|w_\tx{ss} + w_\tx{us}\| \leq (a + \epsilon)\|v_\tx s\|.
\end{align}

In order to prove \eqref{eq:aval-1}, we observe that Lemma~\ref{lem:length-vect} yields $\|w_\tx{uu}\| \geq \sqrt{1-\delta}\|w_\tx u\|$, thus
\begin{equation}
\label{eq:aval-3}
\|w_\tx{uu}\| \geq b\sqrt{1-\delta}\|v_\tx u\|.
\end{equation}
Using again Lemma~\ref{lem:length-vect}, we obtain
\begin{equation}
\label{eq:aval-4}
\|w_\tx{su}\|^2 = \|w_\tx s\|^2 - \|w_\tx{ss}\|^2 \leq \delta \|w_\tx s\|^2 \leq \delta a^2 \|v_\tx s\|^2
 \leq \frac{\delta a^2}{c_4^2}\|v_\tx u\|^2,
\end{equation}
where in the last step we use the assumption $\|v_\tx u\| \geq c_4 \|v_\tx s\|$.
Combining \eqref{eq:aval-3} and \eqref{eq:aval-4} we get
\begin{equation*}
\|w_\tx{su} + w_\tx{uu}\| \geq \|w_\tx{uu}\| - \|w_\tx{su}\| \geq \Big(b\sqrt{1-\delta} - \frac{a\sqrt\delta}{c_4}\Big)\|v_\tx u\|,
\end{equation*}
which is \eqref{eq:aval-1} with $\epsilon = b(1 - \sqrt{1-\delta}) + \frac{a\sqrt\delta}{c_4}$.

We are left with \eqref{eq:aval-2}. We have $\|w_\tx{ss}\| \leq \|w_\tx s\| \leq a\|v_\tx s\|$
and, similarly as in \eqref{eq:aval-4},
\begin{equation*}
\|w_\tx{us}\| \leq \delta\|w_\tx u\| \leq \frac{\delta}{c_3}\|w_\tx{ss} + w_\tx{us}\|,
\end{equation*}
which yields
\begin{equation*}
\|w_\tx{ss}+w_\tx{us}\| \leq \Big(1 - \frac{\delta}{c_3}\Big)^{-1}\|w_\tx{ss}\| \leq a\Big(1 - \frac{\delta}{c_3}\Big)^{-1}\|v_\tx{s}\|.
\end{equation*}
This proves \eqref{eq:aval-2} with $\epsilon = a\big(\big(1 - \frac{\delta}{c_3}\big)^{-1} - 1\big)$.
\end{proof}
\begin{remark}
We see that for any $c_3, c_4 > 0$ all the conditions are satisfied if $\delta$ is small enough.
In particular, taking $c_4$ small enough, we deduce from Theorem~\ref{thm:main-dis-2} that $\phi(X_\tx s(n), Y_\tx s(n)) \to 0$
as $\delta \to 0$.
\end{remark}
%\begin{remark}
%Perhaps using some arguments of Goldstein and Schlag~\cite{GS01} and applying recursively Proposition~\ref{prop:aval}
%one could obtain a proof of the Oseledets Theorem.
%\end{remark}
\subsection{Backward heat equation with an almost constant potential}
\label{ssec:backward-heat}
As our next example, we consider the backward heat equation with a time-dependent potential:
\begin{equation}
\label{eq:back-heat}
\begin{gathered}
\partial_t u(t, x) = {-}\Delta u(t, x) + V(t, x)u(t, x), \qquad \text{for }(t, x) \in \bR_+\times \Omega. \\
%u(t) \in H_0^1(\Omega), \qquad \text{for } t \in \bR.
\end{gathered}
\end{equation}
We assume that $\Omega \subset \bR^d$ is bounded with smooth boundary and that
$V \in L^\infty([0, \infty); L^p(\Omega))$ for some $p > \frac d2$.
To simplify, we will also assume $d \geq 4$, but straightforward modifications allow to cover $d = 1, 2, 3$ as well
(in this case, one should take $p = 2$).
Given a potential $V \in L^p(\Omega)$, we denote $\lambda_j(V)$ the $j$-th smallest eigenvalue
(counted with multiplicities) of the Schr\"odinger operator $-\Delta + V$ with Dirichlet boundary conditions.

We assume that there exists $\mu > 0$ such that for all $t \in [0, \infty)$
\begin{equation*}
\label{eq:back-heat-spectral-gap}
\lambda_1(V(t)) \leq -\mu,\qquad \lambda_2(V(t)) \geq \mu.
\end{equation*}
Note that, upon adding a fixed constant to the potential,
we could cover the case where $[\lambda_1(V(t)), \lambda_2(V(t))]$ contains a given interval of strictly positive length for all $t$.
\begin{proposition}
\label{prop:back-heat}
For any $\epsilon > 0$ there exists $\delta = \delta(\Omega, p, \|V\|_{L^\infty L^p}, \mu, \epsilon) > 0$ such that if
\begin{equation}
\label{eq:back-heat-ass}
\|V(t_1) - V(t_2)\|_{L^p} \leq \delta\qquad\text{for all }t_1, t_2\text{ with }|t_1 - t_2| \leq 1,
\end{equation}
then there exists a unique (up to multiplying by a constant) non-trivial solution $u_\tx s(t): [0, \infty) \to H_0^1(\Omega)$
of \eqref{eq:back-heat} satisfying
\begin{equation*}
\sup_{t\geq 0}\eee^{-(\mu - \epsilon)t}\|u_\tx s(t)\|_{H_0^1} < \infty.
\end{equation*}
In addition, this solution satisfies
\begin{equation*}
\sup_{t\geq 0}\eee^{(\mu - \epsilon)t}\|u_\tx s(t)\|_{H_0^1} < \infty.
\end{equation*}
\end{proposition}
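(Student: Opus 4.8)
The plan is to reformulate \eqref{eq:back-heat} as a discrete backward dynamical system on $X := L^2(\Omega)$ and apply Theorem~\ref{thm:main-dis} with $K=1$. Since $p>d/2$, each $V(t)\in L^p(\Omega)$ is, uniformly in $t$, an infinitesimally form-bounded perturbation of $-\Delta$ on $L^2(\Omega)$, so \eqref{eq:back-heat}, which is well-posed in the direction of decreasing time, generates a strongly continuous backward evolution operator $S(t,\tau)$, $0\le t\le\tau$, with $\|S(t,\tau)\|_{\scrL(L^2)}\le M_0$ and the smoothing bound $\|S(t,\tau)\|_{\scrL(L^2,H^1_0)}\le M_0(\tau-t)^{-1/2}$, where $M_0$ depends only on $\Omega$, $p$, $\|V\|_{L^\infty L^p}$. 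Set $A_n:=S(n-1,n)$ for $n\ge 1$, so $A(n,m)=S(n,m)$. For each integer $n$ let $\psi_n\in H^1_0(\Omega)$ be the first eigenfunction of $L_n:=-\Delta+V(n)$ (simple, by the gap $\lambda_1(V(n))\le-\mu<\mu\le\lambda_2(V(n))$), normalised by $\|\psi_n\|_{L^2}=1$, with $L_n\psi_n=\lambda_1(V(n))\psi_n$, $\lambda_1(V(n))\le-\mu$; let $P_n$ be the $L^2$-orthogonal projection onto $\psi_n$ and $Q_n:=\Id-P_n$. I would take
\[
I_n^-(v):=|\langle v,\psi_n\rangle_{L^2}|,\qquad I_n^+(v):=\|Q_n v\|_{L^2},
\]
which are seminorms with $I_n^-(v)^2+I_n^+(v)^2=\|v\|_{L^2}^2$; hence \eqref{eq:norm-equiv} holds (with $c_1=2^{-1/2}$) and \eqref{eq:Inm-form} holds with $K=1$, $c_2=1$, $\alpha_{1,n}^-=\psi_n$, while $\spn\{\psi_n\}$ witnesses \eqref{eq:contains-K} for every $c_3>0$. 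I would fix absolute $0<c_3<c_4$ satisfying \eqref{eq:intro-c3c4} and, assuming without loss of generality $0<\epsilon<\mu$, set $a:=\eee^{-(\mu-\epsilon/2)}$, $b:=\eee^{\mu-\epsilon/2}$, so $0<a<1<b$. (We discretise rather than invoke the continuous Theorem~\ref{thm:main-cont-1} so as to use only the unit-time maps $A_n$ and the spectral data at integer times.)

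The heart of the argument is verifying the cone inequalities \eqref{eq:intro-ineq-Im0}--\eqref{eq:intro-ineq-Ip0} for $A_n$. In the model case where $V$ equals $V(n)$ on $[n-1,n]$ we have $A_n=\eee^{-L_n}$ and $\psi_{n-1}=\psi_n$, and $\eee^{-L_n}$, which commutes with $P_n$, scales $\psi_n$ by $\eee^{-\lambda_1(V(n))}\ge\eee^{\mu}$ and has $L^2$-operator norm $\eee^{-\lambda_2(V(n))}\le\eee^{-\mu}$ on $\{\psi_n\}^\perp$; thus $I_{n-1}^-(A_n v)\ge\eee^{\mu}I_n^-(v)$ and $I_{n-1}^+(A_n v)\le\eee^{-\mu}I_n^+(v)$, which is \eqref{eq:intro-ineq-Im0}--\eqref{eq:intro-ineq-Ip0} with room to spare. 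For the actual $V$, hypothesis \eqref{eq:back-heat-ass} gives $\|V(t)-V(n)\|_{L^p}\le\delta$ for $t\in[n-1,n]$, whence the standard Duhamel perturbation bound $\|A_n-\eee^{-L_n}\|_{\scrL(L^2)}\le C\delta$ and the Kato-type estimate $\|\psi_n-\psi_{n-1}\|_{L^2}\le C\delta$, and therefore
\[
I_{n-1}^-(A_n v)\ge\eee^{\mu}I_n^-(v)-C_1\delta\|v\|_{L^2},\qquad I_{n-1}^+(A_n v)\le\eee^{-\mu}I_n^+(v)+C_2\delta\|v\|_{L^2}
\]
for all $v\in L^2$, with $C_1,C_2$ depending only on $\Omega,p,\|V\|_{L^\infty L^p},\mu$. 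For \eqref{eq:intro-ineq-Im0}: if $v_n\in\cV_\tx s(c_4,n)$ then $\|v_n\|_{L^2}\le\tfrac{1+c_4}{c_1}I_n^-(v_n)$ by \eqref{eq:norm-equiv-Im}, so $I_{n-1}^-(A_n v_n)\ge(\eee^{\mu}-C_1\tfrac{1+c_4}{c_1}\delta)I_n^-(v_n)\ge a^{-1}I_n^-(v_n)$ once $\delta$ is small. For \eqref{eq:intro-ineq-Ip0}: from $A_n v_n\in\cV_\tx u(c_3,n-1)$, i.e.\ $I_{n-1}^+(A_n v_n)\ge c_3 I_{n-1}^-(A_n v_n)$, the two displayed estimates together with $\|v_n\|_{L^2}\le c_1^{-1}(I_n^-(v_n)+I_n^+(v_n))$ force $I_n^-(v_n)\le C_3 I_n^+(v_n)$ for $\delta$ small (with $C_3$ of the same dependence), hence $\|v_n\|_{L^2}\le c_1^{-1}(C_3+1)I_n^+(v_n)$, and feeding this back yields $I_{n-1}^+(A_n v_n)\le(\eee^{-\mu}+C_2 c_1^{-1}(C_3+1)\delta)I_n^+(v_n)\le b^{-1}I_n^+(v_n)$ for $\delta$ small. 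Choosing $\delta=\delta(\Omega,p,\|V\|_{L^\infty L^p},\mu,\epsilon)$ small enough for all of this, Theorem~\ref{thm:main-dis} yields an exponential dichotomy $L^2(\Omega)=X_\tx s(n)\oplus X_\tx u(n)$ with values $a<b$, $\dim X_\tx s(n)=1$, $X_\tx s(n)\subset\cV_\tx s(c_3,n)$.

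It remains to read off $u_\tx s$. Pick $w_0\ne 0$ in the line $X_\tx s(0)$, set $u_\tx s(n):=(A(0,n)|_{X_\tx s(n)})^{-1}w_0\in X_\tx s(n)$ for integers $n\ge 0$ (legitimate by Definition~\ref{def:exp-dich}(3)) and $u_\tx s(t):=S(t,n)u_\tx s(n)$ for $t\in(n-1,n)$; the cocycle identity makes $(u_\tx s(t))_{t\ge 0}$ a genuine solution of \eqref{eq:back-heat}, and since $u_\tx s(t)=S(t,\lceil t\rceil+1)u_\tx s(\lceil t\rceil+1)$ with $S(t,\lceil t\rceil+1)\colon L^2\to H^1_0$ bounded (the time increment being $\ge 1$), $u_\tx s(t)\in H^1_0(\Omega)$ for every $t\ge 0$. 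Definition~\ref{def:exp-dich}(4) at $n=0$ gives $\|u_\tx s(m)\|_{L^2}\le C a^m\|w_0\|$, and the smoothing bound then gives $\|u_\tx s(t)\|_{H^1_0}\lesssim\eee^{-(\mu-\epsilon/2)t}$, so both $\sup_t\eee^{(\mu-\epsilon)t}\|u_\tx s(t)\|_{H^1_0}$ and $\sup_t\eee^{-(\mu-\epsilon)t}\|u_\tx s(t)\|_{H^1_0}$ are finite. For uniqueness, let $u$ be any non-trivial solution of \eqref{eq:back-heat} with $\sup_t\eee^{-(\mu-\epsilon)t}\|u(t)\|_{H^1_0}<\infty$; then $(u(n))_n$ solves the discrete system, so, writing $u(n)=\pi_\tx s(n)u(n)+\pi_\tx u(n)u(n)$, properties (4)--(5) of Definition~\ref{def:exp-dich} give $\|\pi_\tx s(m)u(m)\|\le Ca^{m-n}\|\pi_\tx s(n)u(n)\|$ and $\|\pi_\tx u(m)u(m)\|\ge C^{-1}b^{m-n}\|\pi_\tx u(n)u(n)\|$; if $\pi_\tx u(n)u(n)\ne 0$ for some $n$ then, since $a<1<b$, $\|u(m)\|\gtrsim b^{m-n}$ for large $m$, so $\eee^{-(\mu-\epsilon)m}\|u(m)\|\gtrsim\eee^{(\epsilon/2)m}\to\infty$ (using $b=\eee^{\mu-\epsilon/2}$ and $\|\cdot\|_{L^2}\le\|\cdot\|_{H^1_0}$), a contradiction. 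Hence $u(n)\in X_\tx s(n)$ for all $n$; as $\dim X_\tx s(0)=1$ and $A(0,n)$ is injective on $X_\tx s(n)$, $u(n)$ is the same scalar multiple of $u_\tx s(n)$ for every $n$, and then $u=c\,u_\tx s$ on $[0,\infty)$ by the cocycle property.

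I expect the dichotomy machinery to pass through painlessly — the cone inequalities reduce transparently to the frozen-coefficient model plus $O(\delta)$ errors — while the real work, which I would isolate as preliminary lemmas, is the parabolic groundwork at the borderline integrability $p>d/2$: that \eqref{eq:back-heat} (solved backward in time) defines a strongly continuous backward evolution family on $L^2(\Omega)$ with the stated uniform smoothing estimate; the Duhamel perturbation bound $\|S(n-1,n)-\eee^{-L_n}\|_{\scrL(L^2)}\lesssim\|V-V(n)\|_{L^\infty([n-1,n];L^p)}$; and the first-eigenfunction stability $\|\psi_n-\psi_{n-1}\|_{L^2}\lesssim\|V(n)-V(n-1)\|_{L^p}$. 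These are classical (form methods, analyticity of the Dirichlet heat semigroup, Kato perturbation theory), but require the correct Sobolev and Gagliardo--Nirenberg bookkeeping for $d/2<p$, since multiplication by an $L^p$ function is not bounded on $L^2$ and one must exploit the smoothing of the free evolution.
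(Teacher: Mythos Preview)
Your argument is correct, but it follows a genuinely different route from the paper's. The paper invokes the \emph{continuous} backward result (Theorem~\ref{thm:main-cont-1}) on the space $H_0^1(\Omega)$, with $I_t^-(v)=C_0|\langle \phi_1(W(t)),v\rangle|$ and $I_t^+(v)=\big(\max(0,\int|\nabla v|^2+W(t)v^2)\big)^{1/2}$, where $W(t)$ is a time-mollification of $V$; the cone inequalities \eqref{eq:ineq-Im-c}--\eqref{eq:ineq-Ip-c} are then verified via differential (bootstrap) estimates on $I_t^\pm(u(t))$, which in particular requires the $H^2$-level coercivity bounds \eqref{eq:back-heat-H2coer}--\eqref{eq:back-heat-H2coer-2} to control $\tfrac{d}{dt}I_t^+(u(t))^2$. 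By contrast, you discretise and work in $L^2$ with the orthogonal decomposition along $\psi_n$, reducing the cone checks to a single Duhamel comparison $\|A_n-\eee^{-L_n}\|_{\scrL(L^2)}\lesssim\delta$ plus the eigenfunction stability $\|\psi_n-\psi_{n-1}\|_{L^2}\lesssim\delta$ (the paper's Lemma~\ref{lem:back-heat-spectral} actually only proves the latter with $\sqrt\delta$, which would still suffice for you). Your approach is lighter: it avoids the time-smoothed potential $W$, the $H^2$ coercivity, and the continuous-time continuity arguments; the paper's energy-form choice of $I_t^+$, on the other hand, is what makes their outlined extension to $H^k\cap H_0^1$ (via $\sum a_j\langle v,(-\Delta+W)^j v\rangle$) natural, and the continuous framework spares them your final step of interpolating from integer times to all $t$ via parabolic smoothing.
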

\begin{remark}
Condition \eqref{eq:back-heat-ass} means that on any time interval of unit length the potential,
though potentially highly oscillatory, is close in $L^p$ to a fixed function.
Note however that on large time intervals the potential can change considerably.
\end{remark}
Before giving a proof, recall a few elementary facts from Spectral Theory.
For a given potential $V \in L^p$, we denote $\phi_1(V)$ the positive eigenfunction corresponding to the smallest
eigenvalue $\lambda_1(V)$, normalised so that $\|\phi_1(V)\|_{L^2} = 1$.
\begin{lemma}
\label{lem:back-heat-spectral}
For any $M \geq 0$ there exists $C = C(\Omega, p, M, \mu) \geq 0$ such that for all $V, W$ with $\|V\|_{L^p}, \|W\|_{L^p} \leq M$,
$\lambda_1(V) \leq -\mu$, $\lambda_2(V) \geq \mu$ the following bounds hold:
\begin{gather}
|\lambda_1(V)| \leq C, \qquad \|\phi_1(V)\|_{H_0^1} \leq C, \label{eq:lambda1-bound} \\
|\lambda_1(V) - \lambda_1(W)| \leq C\|V - W\|_{L^p}, \label{eq:lambda12-bound} \\
\|\phi_1(V) - \phi_1(W)\|_{H_0^1} \leq C\sqrt{\|V - W\|_{L^p}}, \label{eq:phi12-bound} \\
\la u, ({-}\Delta + V)u\ra \geq \frac 1C \|u\|_{H_0^1}^2 - C\la \phi_1(V), u\ra^2, \quad \forall u \in H_0^1(\Omega), 
\label{eq:back-heat-coer} \\
\|({-}\Delta + V)u\|_{L^2}^2 \geq \frac 1C \|{-}\Delta u\|_{L^2}^2  - C\la \phi_1(V), u\ra^2, \quad \forall u \in H_0^1(\Omega)\cap H^2(\Omega),
\label{eq:back-heat-H2coer} \\
\|({-}\Delta + V)u\|_{L^2}^2 \geq \mu \la u, ({-}\Delta + V)u\ra - C\la \phi_1(V), u\ra^2, \quad \forall u \in H_0^1(\Omega)\cap H^2(\Omega).
\label{eq:back-heat-H2coer-2}
\end{gather}
\end{lemma}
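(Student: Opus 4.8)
The plan is to assemble standard elliptic and spectral facts, the only genuine subtlety being that the constant $C$ must be uniform over the admissible class $\cA_M:=\{V\in L^p(\Omega):\|V\|_{L^p}\le M,\ \lambda_1(V)\le-\mu,\ \lambda_2(V)\ge\mu\}$; below $C_M,c_M,C_{\varepsilon,M}$ denote constants depending only on $\Omega,p,M,\mu$ (and $\varepsilon$). The basic input, available because $p>\tfrac d2\ge 2$, is that multiplication by $V$ is a relative form (resp.\ operator) perturbation of $-\Delta$ with relative bound $0$ and constants controlled by $\|V\|_{L^p}$: Hölder combined with the Sobolev embeddings for $H_0^1(\Omega)$ and for $H^2(\Omega)\cap H_0^1(\Omega)$ (and $\|u\|_{H^2}\simeq\|\Delta u\|_{L^2}$ on the latter) gives, for every $\varepsilon>0$,
\[
\Big|\int_\Omega V|u|^2\Big|\le\varepsilon\|\nabla u\|_{L^2}^2+C_{\varepsilon,M}\|u\|_{L^2}^2\ \ (u\in H_0^1(\Omega)),\qquad \|Vu\|_{L^2}\le\varepsilon\|\Delta u\|_{L^2}+C_{\varepsilon,M}\|u\|_{L^2}\ \ (u\in H^2(\Omega)\cap H_0^1(\Omega)).
\]
Using the first inequality with $\varepsilon=\tfrac12$ in the variational formula $\lambda_1(V)=\inf_{\|u\|_{L^2}=1}\big(\|\nabla u\|_{L^2}^2+\int V|u|^2\big)$ gives $\lambda_1(V)\ge-C_M$, hence $|\lambda_1(V)|\le C_M$ by $\lambda_1(V)\le-\mu$, and taking $u=\phi_1(V)$ in the same line gives $\|\phi_1(V)\|_{H_0^1}\le C_M$; this proves \eqref{eq:lambda1-bound}, the $H_0^1$-bound on the (always simple, by Perron--Frobenius) ground state in fact holding for every $V$ with $\|V\|_{L^p}\le M$. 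Testing $\lambda_1(V)$ against $\phi_1(W)$ and symmetrically then gives $|\lambda_1(V)-\lambda_1(W)|\le\int|V-W|\,\phi_1(V)^2+\int|V-W|\,\phi_1(W)^2\le C_M\|V-W\|_{L^p}$ by Hölder and Sobolev, which is \eqref{eq:lambda12-bound}. Finally, since $p>\tfrac d2$, a Brezis--Kato/Moser iteration on $-\Delta\phi_1(V)=(\lambda_1(V)-V)\phi_1(V)$ gives $\|\phi_1(V)\|_{L^\infty}\le C_M$ (the constant controlled by $\|V\|_{L^p}$ and $|\lambda_1(V)|\le C_M$), whence $V\phi_1(V)\in L^p\subset L^2$ and elliptic regularity yields $\phi_1(V)\in H^2(\Omega)\cap H_0^1(\Omega)$ with $\|\phi_1(V)\|_{H^2}\le C_M$, a bound needed below.

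For the coercivity \eqref{eq:back-heat-coer} I would decompose $u=s\phi_1(V)+w$ with $s=\langle\phi_1(V),u\rangle$ and $w\perp\phi_1(V)$ in $L^2$; the cross terms vanish, so $\langle u,(-\Delta+V)u\rangle=s^2\lambda_1(V)+\langle w,(-\Delta+V)w\rangle$. The spectral gap gives $\langle w,(-\Delta+V)w\rangle\ge\mu\|w\|_{L^2}^2$, and adding a small multiple of the form bound upgrades this to $\langle w,(-\Delta+V)w\rangle\ge c_M\|w\|_{H_0^1}^2$. Combining with $|\lambda_1(V)|\le C_M$, $\|\phi_1(V)\|_{H_0^1}\le C_M$ and $\|u\|_{H_0^1}^2\le C_Ms^2+2\|w\|_{H_0^1}^2$ produces \eqref{eq:back-heat-coer}.

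The stability estimate \eqref{eq:phi12-bound} is the main difficulty, and the square root in it is genuine. Set $q_V(\cdot):=\langle\cdot,(-\Delta+V-\lambda_1(V))\cdot\rangle\ge 0$, with associated bilinear form $b_V$; its kernel is $\bR\phi_1(V)$, and by the variational characterisation of $\lambda_2(V)$ one has $q_V(\psi)\ge(\lambda_2(V)-\lambda_1(V))\|\psi\|_{L^2}^2\ge 2\mu\|\psi\|_{L^2}^2$ for $\psi\perp\phi_1(V)$, which after adding a small multiple of the form bound becomes $q_V(\psi)\ge c_M\|\psi\|_{H_0^1}^2$. Write $\phi_1(V)-\phi_1(W)=c\,\phi_1(V)+\psi$ with $\psi\perp\phi_1(V)$ in $L^2$. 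Since $\phi_1(V)$ solves $(-\Delta+V-\lambda_1(V))\phi_1(V)=0$ weakly, $b_V(\phi_1(V),\cdot)\equiv 0$ on $H_0^1(\Omega)$, so $q_V(\phi_1(V)-\phi_1(W))=q_V(\psi)$ and also $q_V(\phi_1(V)-\phi_1(W))=q_V(\phi_1(W))=\int(V-W)\phi_1(W)^2+(\lambda_1(W)-\lambda_1(V))\le C_M\|V-W\|_{L^p}$ by \eqref{eq:lambda12-bound} and Hölder--Sobolev; hence $\|\psi\|_{H_0^1}^2\le C_M\|V-W\|_{L^p}$. As $\phi_1(V),\phi_1(W)$ are positive and $L^2$-normalised, $c=1-\langle\phi_1(V),\phi_1(W)\rangle\in[0,1]$ and $\|\phi_1(V)-\phi_1(W)\|_{L^2}^2=2c=c^2+\|\psi\|_{L^2}^2$, forcing $c\le\|\psi\|_{L^2}^2$; therefore
\[
\|\phi_1(V)-\phi_1(W)\|_{H_0^1}\le c\,\|\phi_1(V)\|_{H_0^1}+\|\psi\|_{H_0^1}\le C_M\big(\|V-W\|_{L^p}+\|V-W\|_{L^p}^{1/2}\big)\le C_M\|V-W\|_{L^p}^{1/2},
\]
the last step using $\|V-W\|_{L^p}\le 2M$.

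It remains to prove the two $H^2$-coercivities, again via $u=s\phi_1(V)+w$ with $w\perp\phi_1(V)$ in $L^2$ and $w\in H^2(\Omega)\cap H_0^1(\Omega)$; then $(-\Delta+V)w\perp\phi_1(V)$, so $\|(-\Delta+V)u\|_{L^2}^2=s^2\lambda_1(V)^2+\|(-\Delta+V)w\|_{L^2}^2$. The relative operator bound gives $\|(-\Delta+V)w\|_{L^2}^2\ge\tfrac12\|\Delta w\|_{L^2}^2-C_M\|w\|_{L^2}^2$, while spectral calculus on $\{\phi_1(V)\}^\perp$ gives $\|(-\Delta+V)w\|_{L^2}^2\ge\mu^2\|w\|_{L^2}^2$; a positive combination of the two removes $\|w\|_{L^2}^2$ and yields $\|(-\Delta+V)w\|_{L^2}^2\ge c_M\|\Delta w\|_{L^2}^2$, and since $\|\Delta u\|_{L^2}^2\le C_Ms^2+2\|\Delta w\|_{L^2}^2$ by the $H^2$-bound on $\phi_1(V)$, this gives \eqref{eq:back-heat-H2coer}. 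For \eqref{eq:back-heat-H2coer-2} the $s$-terms satisfy $s^2\lambda_1(V)^2-\mu s^2\lambda_1(V)\ge 0$ since $\lambda_1(V)<0$, so it suffices to check $\|(-\Delta+V)w\|_{L^2}^2\ge\mu\langle w,(-\Delta+V)w\rangle$ for $w\perp\phi_1(V)$, which is the operator inequality $A^2\ge\mu A$ on the spectral subspace where $A=-\Delta+V\ge\mu$, i.e.\ $\int_\mu^\infty\lambda(\lambda-\mu)\,\vd\|E_\lambda w\|^2\ge 0$. The main obstacle throughout is not any single inequality but keeping every constant uniform over $\cA_M$, which is why the relative-boundedness step and the uniform $L^\infty$-bound on ground states (resting on the a priori control of $\lambda_1$) have to be set up first.
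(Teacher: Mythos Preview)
Your proof is correct and follows the same spectral-theoretic template as the paper --- Hölder/Sobolev bounds for \eqref{eq:lambda1-bound}--\eqref{eq:lambda12-bound}, the $L^2$-orthogonal decomposition $u=s\phi_1(V)+w$ for the coercivity inequalities, and a gap argument for \eqref{eq:phi12-bound} --- so the overall strategy is essentially the paper's. The one substantive difference is in \eqref{eq:back-heat-H2coer}: you need $\phi_1(V)\in H^2$ with uniform bound (hence the Brezis--Kato/Moser detour to $L^\infty$) in order to split $\|\Delta u\|^2$ along the decomposition, whereas the paper avoids this by writing $\|(-\Delta+V)u\|^2=\eta\|(-\Delta+V)u\|^2+(1-\eta)\|(-\Delta+V)u\|^2$, peeling off $\tfrac{\eta}{2}\|\Delta u\|^2$ from the first piece via the relative bound, and absorbing the remainder using the already-proved \eqref{eq:back-heat-H2coer-2} and \eqref{eq:back-heat-coer}. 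Your route is perhaps more transparent (the operator inequality $A^2\ge\mu A$ on $\{\phi_1(V)\}^\perp$ is immediate), at the cost of importing elliptic regularity for $\phi_1(V)$; the paper's route is more self-contained. Both are standard and neither buys anything the other does not.
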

\begin{proof}
By H\"older and Sobolev, we have
\begin{equation*}
\int_\Omega V\phi_1(V)^2\ud x \leq \|V\|_{L^p}\|\phi_1(V)\|_{L^2}^{2-\frac dp}\|\phi_1(V)\|_{L^{\frac{2d}{d-2}}}^\frac dp
\leq C(\Omega) M \|\phi_1(V)\|_{H_0^1}^\frac dp.
\end{equation*}
Thus
\begin{equation*}
0 \leq -\lambda_1(V) = \int_\Omega \big(V \phi_1(V)^2  - |\grad \phi_1(V)|^2 \big)\ud x \leq C(\Omega) M\|\phi_1(V)\|_{H_0^1}^\frac dp
- \|\phi_1(V)\|_{H_0^1}^2,
\end{equation*}
which implies
\begin{equation*}
\|\phi_1(V)\|_{H_0^1} \leq (C(\Omega) M)^\frac{p}{2p-d}, \qquad |\lambda_1(V)| \leq (C(\Omega)M)^\frac{2p}{2p-d}.
\end{equation*}

In order to prove \eqref{eq:lambda12-bound}, we observe that
\begin{equation*}
\begin{aligned}
\lambda_1(W) &\leq \int_\Omega \big(|\grad \phi_1(V)|^2 + W|\phi_1(V)|^2\big)\ud x \\ &\leq \lambda_1(V) + \int_\Omega |V - W||\phi_1(V)|^2 \ud x \leq \lambda_1(V) + C\|V - W\|_{L^p},
\end{aligned}
\end{equation*}
where the last step follows from H\"older, Sobolev and \eqref{eq:lambda1-bound}. Analogously, $\lambda_1(V) \leq \lambda_1(W) + C\|V - W\|_{L^p}$.

Next, we prove \eqref{eq:back-heat-coer}. By the Spectral Theorem we have
\begin{equation}
\label{eq:spectral-coer}
\int_\Omega \big(|\grad u|^2 + Vu^2\big)\ud x + (\mu - \lambda_1(V))\la \phi_1(V), u\ra^2 \geq \mu \|u\|_{L^2}^2.
\end{equation}
For any $\eta > 0$ we thus have
\begin{equation*}
\begin{aligned}
&\int_\Omega \big(|\grad u|^2 + Vu^2\big)\ud x + (1-\eta)(\mu - \lambda_1(V))\la \phi_1(V), u\ra^2 
\geq (1-\eta)\mu\|u\|_{L^2}^2 + \eta \int_\Omega \big(|\grad u|^2 + Vu^2\big)\ud x \\
&\geq \eta \|u\|_{H_0^1}^2 - \eta C\|u\|_{L^2}^{2-\frac dp}\|u\|_{H_0^1}^\frac dp + (1-\eta)\mu\|u\|_{L^2}^2,
\end{aligned}
\end{equation*}
so if we take $\eta$ small enough, then the Young's inequality for products yields \eqref{eq:back-heat-coer}.

We prove \eqref{eq:phi12-bound}. Using the bounds already proved, we have
\begin{equation*}
\int_\Omega \big(|\grad \phi_1(W)|^2 + V \phi_1(W)^2\big)\ud x \leq \lambda_1(W) + C\|V - W\|_{L^p} \leq \lambda_1(V) + C\|V - W\|_{L^p}.
\end{equation*}
Let $\phi_1(W) = a\phi_1(V) + bu$, with $a^2 + b^2 = 1$, $\|u\|_{L^2} = 1$ and $\la \phi_1(V), u\ra = 0$. We then have
\begin{equation*}
\lambda_1(V) a^2 + \mu b^2 \leq\int_\Omega \big(|\grad \phi_1(W)|^2 + V \phi_1(W)^2\big)\ud x \leq \lambda_1(V) + C\|V - W\|_{L^p},
\end{equation*}
thus $|b| \leq C\sqrt{\|V - W\|_{L^p}}$, which implies
\begin{equation*}
\|\phi_1(W) \pm \phi_1(V)\|_{L^2} \leq C\sqrt{\|V - W\|_{L^p}}\quad \Rightarrow \quad \|\phi_1(W) - \phi_1(V)\|_{L^2} \leq C\sqrt{\|V - W\|_{L^p}},
\end{equation*}
where the last implication follows because both functions are positive. Now \eqref{eq:phi12-bound} easily follows from \eqref{eq:back-heat-coer} for $u := \phi_1(V) - \phi_1(W)$. Indeed, we have
\begin{equation*}
\begin{aligned}
&\la \phi_1(V) - \phi_1(W), ({-}\Delta + V)(\phi_1(V) - \phi_1(W))\ra \\
&\leq \la \phi_1(V) - \phi_1(W), \lambda_1(V)\phi_1(V) - \lambda_1(W)\phi_1(W)\ra + C\|V - W\|_{L^p} \leq C\|V - W\|_{L^p}.
\end{aligned}
\end{equation*}

Inequality \eqref{eq:back-heat-H2coer-2} follows from \eqref{eq:spectral-coer} applied to $\sqrt{L}u$ instead of $u$,
where $Lu := ({-}\Delta + V)u + (\mu - \lambda_1(V))\la \phi_1(V), u\ra\phi_1(V)$.

Finally, in order to prove \eqref{eq:back-heat-H2coer}, we write
\begin{equation*}
\begin{aligned}
\|({-}\Delta + V)u\|_{L^2}^2 &= \eta\|({-}\Delta + V)u\|_{L^2}^2 + (1-\eta)\|({-}\Delta + V)u\|_{L^2}^2 \\
&\geq \frac{\eta}{2}\|{-}\Delta u\|_{L^2}^2 - 2\eta \|Vu\|_{L^2}^2  + (1-\eta)\|({-}\Delta + V)u\|_{L^2}^2.
\end{aligned}
\end{equation*}
By the Sobolev inequality, we have $\|Vu\|_{L^2} \leq C\|{-}\Delta u\|_{L^2}^{1-\alpha }\|u\|_{L^2}^\alpha $ for some $\alpha > 0$.
Thus, if we take $\eta$ small enough, \eqref{eq:back-heat-H2coer} follows from \eqref{eq:back-heat-H2coer-2} and \eqref{eq:back-heat-coer}.
\end{proof}
\begin{proposition}
\label{prop:back-heat-cauchy}
If $r$ is large enough, then for any $V \in L_\tx{loc}^r([0, \infty), L^p)$ equation \eqref{eq:back-heat} defines
a strongly continuous backward evolution operator $S(\tau, t)$ in $H_0^1(\Omega)$.
Moreover, for any $[\tau, t] \subset [0, \infty)$ the mapping
\begin{equation*}
L^r([\tau, t], L^p) \owns V \mapsto S(\tau, t) \in \scrL(H_0^1(\Omega))
\end{equation*}
is continuous.
\end{proposition}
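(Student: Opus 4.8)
The plan is to reverse time — turning \eqref{eq:back-heat} into a well-posed forward parabolic problem — and to solve it by a fixed-point argument in $C([0,S]; H_0^1(\Omega))$, then iterate over a compact time interval. Concretely, given $v \in H_0^1(\Omega)$ prescribed at a time $t$, set $w(s) := u(t-s)$ for $s \in [0, t-\tau]$; then $w(0) = v$, $w|_{\partial\Omega} = 0$, equation \eqref{eq:back-heat} becomes $\partial_s w = \Delta w - V(t-s)w$, and $S(\tau,t)v := w(t-\tau)$. Written with the Dirichlet heat semigroup $\eee^{s\Delta}$ on $L^2(\Omega)$, a mild solution satisfies
\begin{equation*}
w(s) = \eee^{(s-s_0)\Delta}w(s_0) - \int_{s_0}^{s}\eee^{(s-\sigma)\Delta}\big(V(t-\sigma)w(\sigma)\big)\,\ud\sigma,
\end{equation*}
and it suffices to solve this (affine) equation in $C([s_0,s_1]; H_0^1(\Omega))$.

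Two estimates drive the argument. First, by the Sobolev embedding $H_0^1(\Omega) \hookrightarrow L^{2d/(d-2)}(\Omega)$ and Hölder, multiplication by a function in $L^p(\Omega)$ sends $H_0^1(\Omega)$ into $L^q(\Omega)$ with $\tfrac1q = \tfrac1p + \tfrac{d-2}{2d}$ and operator norm $\lesssim \|V(\sigma)\|_{L^p}$. Second, the Dirichlet heat semigroup satisfies $\|\eee^{s\Delta}f\|_{H_0^1} \lesssim s^{-\beta}\|f\|_{L^q}$ for $s \in (0,1]$, with $\beta = \max\big(\tfrac12, \tfrac{d}{2p}\big)$, and the hypothesis $p > d/2$ is exactly what makes $\beta < 1$. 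Combining these in the Duhamel term and using Hölder in time (with $r'$ conjugate to $r$) gives
\begin{equation*}
\Big\|\int_{s_0}^{s}\eee^{(s-\sigma)\Delta}\big(V(t-\sigma)w(\sigma)\big)\,\ud\sigma\Big\|_{H_0^1}
\lesssim \Big(\int_0^{s_1-s_0}\sigma^{-\beta r'}\,\ud\sigma\Big)^{1/r'}\|V\|_{L^r([s_0,s_1];L^p)}\sup_{s_0\leq\sigma\leq s_1}\|w(\sigma)\|_{H_0^1},
\end{equation*}
which is finite and small whenever $\beta r' < 1$ — i.e. $r$ is large enough, $r > (1-\beta)^{-1}$ — and $s_1 - s_0$ is small. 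Hence the affine map above is a contraction on $C([s_0,s_1]; H_0^1)$ for $s_1 - s_0$ small (depending only on $\|V\|_{L^r([s_0,s_1];L^p)}$, not on the data), producing a unique mild solution on $[s_0,s_1]$ with $\|w(s_1)\|_{H_0^1} \leq C\|w(s_0)\|_{H_0^1}$.

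Since $\sigma \mapsto \|V(\sigma)\|_{L^p}^r$ is integrable on compact sets, any interval $[0,t-\tau]$ splits into finitely many short subintervals on each of which the previous step applies, and composing the resulting solution maps defines $S(\tau,t) \in \scrL(H_0^1(\Omega))$ with norm controlled by $t-\tau$ and $\|V\|_{L^r([\tau,t];L^p)}$. Uniqueness of mild solutions yields $S(\tau,\tau) = \Id$ and the cocycle identity $S(\tau,s)\circ S(s,t) = S(\tau,t)$; strong continuity of $\tau \mapsto S(\tau,t)v$ follows from the Duhamel representation together with strong continuity of $\eee^{s\Delta}$ on $H_0^1(\Omega)$ and continuity of the integral term in its upper limit. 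For continuous dependence on $V$, if $w_1, w_2$ are the mild solutions associated to $V_1, V_2 \in L^r([\tau,t];L^p)$ and the same data, then $w_1 - w_2$ solves the Duhamel equation with source $\eee^{(s-\sigma)\Delta}\big((V_1 - V_2)w_1 + V_2(w_1 - w_2)\big)$, and the estimate above combined with a Gronwall argument on short intervals (then iteration) gives $\|S_{V_1}(\tau,t) - S_{V_2}(\tau,t)\|_{\scrL(H_0^1)} \lesssim \|V_1 - V_2\|_{L^r([\tau,t];L^p)}$, with constant depending on $t-\tau$ and on the $L^r L^p$ norms of $V_1, V_2$.

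The main obstacle is the exponent bookkeeping in the second estimate: one must check that the spatial loss coming from $V$ (governed by $p > d/2$) and the temporal loss from integrating $\sigma^{-\beta}$ against $V \in L^r_\sigma$ are simultaneously subcritical, which is precisely what forces ``$r$ large enough''. A minor additional point is the low-dimensional case $d \in \{1,2,3\}$ flagged in the text: there one takes $p = 2$ and, when $d \leq 2$, replaces $L^{2d/(d-2)}(\Omega)$ by a suitable finite Lebesgue exponent (or $L^\infty$ when $d = 1$), the argument being otherwise unchanged.
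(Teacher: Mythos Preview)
Your proof is correct and follows essentially the same route as the paper: a fixed-point argument in $C(I;H_0^1(\Omega))$ based on the heat-smoothing estimate $\|e^{s\Delta}f\|_{H_0^1}\lesssim s^{-\beta}\|f\|_{L^q}$ with $\beta<1$ (equivalently $p>d/2$), combined with H\"older in time to make the Duhamel term contractive on short intervals, then iteration. The only cosmetic differences are that you reverse time to a forward problem (the paper works directly with the backward integral), you phrase the solution as a contraction fixed point while the paper writes it as a Neumann series $(\Id+\Phi(V,\cdot))^{-1}$, and your formula $\beta=\max(\tfrac12,\tfrac{d}{2p})$ absorbs the case $p\geq d$ that the paper disposes of by assuming $p<d$ without loss of generality.
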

\begin{proof}
Since $\Omega$ is bounded, without loss of generality we can assume $p < d$. 
Set $q := \big(\frac 1p + \frac{d-2}{2d}\big)^{-1} \in \big(\frac{2d}{d+2}, 2\big)$,
so that $\|V u\|_{L^q} \lesssim \|V\|_{L^p}\|u\|_{H_0^1}$.
Using the regularising effect of the heat-flow and the $L^p$--$L^q$ estimates, see \cite[pages 42--44]{CaHa}, we get for all $t > 0$
\begin{equation}
\label{eq:heat-smoothing}
\|\eee^{t\Delta} u\|_{H_0^1} = \|\eee^{\frac t2 \Delta}\eee^{\frac t2 \Delta}u\|_{H_0^1} \lesssim t^{-\frac 12}\|\eee^{\frac t2 \Delta}u\|_{L^2} \lesssim t^{-\frac 12 -\frac d2\big(\frac 1q - \frac 12\big)}\|u\|_{L^q} \lesssim t^{-\beta}\|u\|_{L^q},
\end{equation}
where $\beta := \frac 12 + \frac d2\big(\frac 1q - \frac 12\big) \in \big(\frac 12, 1\big)$ and the constant depends only on $\Omega$ and $p$.

Fix $\tau \leq t$, denote $I := [\tau, t]$ and consider the bilinear operator
\begin{equation*}
\Phi: L^r(I, L^p)\times C(I, H_0^1) \to C(I, H_0^1),\qquad \Phi(V, w)(s) := \int_s^t \eee^{(s'-s)\Delta}(V(s')w(s'))\ud s'.
\end{equation*}
Take $r := 2(1-\beta)^{-1}$ (in fact any $r \in ((1-\beta)^{-1}, \infty)$ would work). From \eqref{eq:heat-smoothing} we obtain
\begin{equation*}
\|\Phi(V, w)\|_{L^\infty H_0^1} \lesssim (t - \tau)^\frac{1-\beta}{2}\|V\|_{L^r L^p}\|w\|_{L^\infty H_0^1},
\end{equation*}
thus there exists $c_0 > 0$ such that if $t - \tau \leq c_0 \|V\|_{L^r L^p}^\frac{-2}{1-\beta}$,
then $\|\Phi(V, \cdot)\|_{\scrL(C(I, H_0^1))} \leq \frac 12$.
One can check that if we define
\begin{equation*}
S(s, t) := \big((\Id + \Phi(V, \cdot))^{-1}(\eee^{(t - \cdot)\Delta})\big)(s),\qquad \forall s \in [\tau, t],
\end{equation*}
then
\begin{equation*}
S(s, t) = \eee^{(t-s)\Delta}u - \int_s^t \eee^{(s'-s)\Delta}(V(s')S(s', t)u)\ud s',
\end{equation*}
which means that $S(s, t)u$ satisfies the integral form of \eqref{eq:back-heat}.
We see that $S(s, t)$ depends continuously on $V$.

This finishes the proof for sufficiently short time intervals. In general, we divide any given time interval into a finite number
of sufficiently short subintervals.
\end{proof}
\begin{proof}[Proof of Proposition~\ref{prop:back-heat}]
We will obtain the result as a corollary of Theorem~\ref{thm:main-cont-1}.
We assume $V \in L^\infty([0, \infty), L^p)$, in particular $V \in L_\tx{loc}^r([0, \infty), L^p)$ for any $r$,
thus Proposition~\ref{prop:back-heat-cauchy} implies that \eqref{eq:back-heat} defines a strongly continuous backward evolution operator.

Let $\chi$ be a $C^\infty$ positive function supported in $(-\frac 12, \frac 12)$ such that $\int_\bR \chi(x) \ud x = 1$.
We set
\begin{equation*}
%\label{eq:backward-W-def}
W(t) := \int_\bR \chi(t - \tau)V(\tau)\ud\tau, \qquad W \in C^\infty((1/2, \infty), L^p).
\end{equation*}
Observe that $\|W'(t)\|_{L^p} \lesssim \delta$ and
\begin{equation*}
\label{eq:V-W-close}
\|V(t) - W(t)\|_{L^p} \leq \int_\bR \chi(t-\tau)\|V(t) - V(\tau)\|_{L^p}\ud\tau \leq \delta.
\end{equation*}
By Lemma~\ref{lem:back-heat-spectral}, we have $\lambda_1(W(t)) \leq -\mu + \frac{1}{10}\epsilon$
and $\lambda_2(W(t)) \geq \mu - \frac{1}{10}\epsilon$ for all $t$ if $\delta$ is small enough.
We set $\phi(t) := \phi_1(W(t))$ and
\begin{align*}
I_t^-(v) &:= C_0|\la \phi_1(W(t)), v\ra|, \\
I_t^+(v) &:= \sqrt{\max\Big(0, \int_\Omega\big(|\grad v|^2 + W(t)v^2\big)\ud x\Big)},
\end{align*}
where $C = C(\Omega, p, M)$ is a large constant.
Clearly, $I_t^-(v)$ and $I_t^+(v)$ are continuous with respect to $(t, v)$
(for $I_t^-$ we use Lemma~\ref{lem:back-heat-spectral}).

Assumption \eqref{eq:norm-equiv-c} follows from \eqref{eq:back-heat-coer}, if $C_0$ is large enough.
Assumption \eqref{eq:Inm-form-c} obviously holds.
It is also clear that $I_t^+(\phi_1(W(t))) = 0$, which implies \eqref{eq:contains-K-c} for any choice of $c_4$.
We now prove that assumption \eqref{eq:ineq-Im-c} holds.
We will choose $c_4$ later (we will see that $c_4$ can be chosen as large as we want,
in particular we can guarantee that \eqref{eq:intro-c3c4-c} holds).
%We know that $\phi_1(t): I \to L^2(\Omega)$ is Lipschitz.
%Without loss of generality we can assume $u \in C^1(I, L^2) \cap C(I, H^2 \cap H_0^1)$.

Let $u$ be a solution of \eqref{eq:back-heat} and let $\tau \leq t$ be such that $I_s^-(u(s)) \geq \frac{1}{c_4}I_s^+(u(s))$
for all $s \in [\tau, t]$,
which implies $\|u(s)\|_{H_0^1} \lesssim I_s^-(u(s))$, see \eqref{eq:norm-equiv-Im}.
Suppose that \eqref{eq:ineq-Im-c} fails and set
\begin{equation*}
t_0 := \inf\{t': I_s^-(u(s)) \geq \eee^{({-}\mu + \epsilon)(s - t)}I_t^-(u(t))\text{ for all }s \in [t', t]\}
\end{equation*}
(we allow the possibility $t_0 = t$). By continuity, $I_{t_0}^-(u(t_0)) \geq \eee^{({-}\mu + \epsilon)(t_0 - t)}I_t^-(u(t))$.
To reach a contradiction, it suffices to show
\begin{equation}
\label{eq:back-heat-boot}
I_{s}^-(u(s)) \geq (1 + ({-}\mu + \epsilon/2)(s - t_0))I_{t_0}^-(u(t_0)) \qquad \text{for }s \in [t_0 - \eta, t_0]\text{ for some }\eta > 0.
\end{equation}

Set $\phi := \phi(t_0)$. If $\eta > 0$ is small, then for $s \in [t_0 - \eta, t_0]$ we have
\begin{align*}
\|u(s) - u(t_0)\|_{H_0^1} &\ll \|u(t_0)\|_{H_0^1}, \\
\|W(t_0) - V(s)\|_{L^p} \lesssim \delta &\ll 1, \\
\|\phi - \phi(s)\|_{L^2} \lesssim \delta|s - t_0| &\ll |s - t_0|.
\end{align*}
Below, we write ``$\simeq$'' when we mean ``up to terms $\ll |s - t_0|\|u(t_0)\|$''.
\begin{equation*}
\begin{aligned}
\la \phi(s), u(s)\ra - \la \phi, u(t_0)\ra &\simeq \la \phi, u(s)\ra - \la \phi, u(t_0)\ra \\
& = \int_s^{t_0} \la (\Delta - V(s'))\phi, u(s')\ra\ud s' \\
&= \int_s^{t_0} \la (\Delta - W(t_0))\phi, u(s')\ra\ud s' + \int_s^{t_0} \la (W(t_0) - V(s'))\phi, u(s')\ra \ud s' \\
&\simeq -\lambda_1(W(t_0))\int_s^{t_0}\la \phi, u(s')\ra \simeq \lambda_1(W(t_0))(s - t_0)\la \phi, u(t_0)\ra.
\end{aligned}
\end{equation*}
Since $\lambda_1(W(t_0)) \leq -\mu + \frac{1}{10}\epsilon$ and $s - t_0 \leq 0$, we obtain \eqref{eq:back-heat-boot}.

We proceed similarly with $I_t^+$.
Let $\tau \leq t$ be such that $I_s^+(u(s)) \geq c_3 I_s^-(u(s))$ for all $s \in [\tau, t]$,
which implies $\|u(s)\|_{H_0^1} \lesssim I_s^+(u(s))$, see \eqref{eq:norm-equiv-Ip}.
If $I_s^+(u(s)) = 0$ for some $s \in [\tau, t]$, then the solution is identically $0$, so assume $I_s^+(u(s)) > 0$ for all $s \in [\tau, t]$.
Suppose that \eqref{eq:ineq-Ip-c} fails and set
\begin{equation*}
t_0 := \inf\{t': I_s^+(u(s)) \leq \eee^{(\mu - \epsilon)(s - t)}I_t^+(u(t))\text{ for all }s \in [t', t]\}
\end{equation*}
(we allow the possibility $t_0 = t$). By continuity, $I_{t_0}^+(u(t_0)) \geq \eee^{(\mu - \epsilon)(t_0 - t)}I_t^-(u(t))$.
To reach a contradiction, it suffices to show
\begin{equation}
\label{eq:back-heat-boot-2}
I_{s}^+(u(s)) \leq (1 + (\mu - \epsilon/2)(s - t_0))I_{t_0}^+(u(t_0)) \qquad \text{for }s \in [t_0 - \eta, t_0]\text{ for some }\eta > 0.
\end{equation}
If $\eta > 0$ is small, then for all $s \in [t_0 - \eta, t_0]$ we have
\begin{equation*}
\|u(s) - u(t_0)\|_{H_0^1} \ll \|u(t_0)\|_{H_0^1}, \quad  \|\partial_s W(s)\|_{L^p} \lesssim \delta \ll 1.
\end{equation*}
Approximating $V$ by a smooth potential in the norm $L^rL^p$ and using Proposition~\ref{prop:back-heat-cauchy},
we can assume that $u$ and $V$ are smooth in both space and time. In the computation below, ``$\simeq$'' means
``up to terms $\ll \|u(t_0)\|_{H_0^1}$''.
\begin{equation*}
\begin{aligned}
\frac 12\dd s I_s^+(u(s))^2 &= \la \partial_s W(s) u(s), u(s)\ra
+ \la ({-}\Delta + W(s))u(s), ({-}\Delta + V(s))u(s)\ra \\
&\simeq \|({-}\Delta + W(s))u(s)\|_{L^2}^2 + \la ({-}\Delta + W(s))u(s), (V(s)-W(s))u(s)\ra  \\
&\geq (\mu - \epsilon/8)I_s^+(u(s))^2 + \frac{\epsilon}{C}\|{-}\Delta u(s)\|_{L^2}^2 - \|V - W\|_{L^p}\|{-}\Delta u(s)\|_{L^2}^2
- C I_s^-(u(s))^2.
\end{aligned}
\end{equation*}
Thus, if $c_3 = c_3(\epsilon)$ is large enough and $I_s^+(u(s)) \geq c_3 I_s^-(u(s))$, then
\begin{equation*}
\frac 12 \dd s I_s^+(u(s))^2 \geq (\nu - \epsilon / 4)I_s^+(u(s))^2,
\end{equation*}
which implies \eqref{eq:back-heat-boot-2}.
\end{proof}
\begin{remark}
For ordinary differential equations, a more general result (dealing with the non self-adjoint case) is proved in \cite[Chapter 6]{Coppel}.
\end{remark}
\begin{remark}
We expect that a similar result could be obtained in $H^k \cap H_0^1$ for any $k \in \{0, 1, 2, \ldots\}$.
We would use the functional $I_t^+(v) := \sum_{j=0}^k a_j \la v, ({-}\Delta + W(t))^j v\ra$,
for appropriate strictly positive numbers $a_0, \ldots, a_k$.
\end{remark}
\subsection{Heat equation with slowly moving potentials}
\label{ssec:heat-forward}
Let $d \geq 3$ and let $V_1, \ldots V_J \in L^\infty(\bR^d) \cap L^\frac d2(\bR^d)$ be potentials.
Assume $-\Delta + V_j$ has $K_j$ strictly negative eigenvalues $-\lambda_{j, 1}, \ldots, -\lambda_{j, k_j}$,
with corresponding eigenfunctions $\cY_{j, 1}, \ldots, \cY_{j, k_j}$.
Note that $K_j$ is finite by the Cwikel-Lieb-Rozenblum theorem.
Let $K := \sum_{j=1}^J K_j$ and $\lambda := \min\{\lambda_{j, k}: 1\leq j\leq J,\ 1\leq k\leq k_j\}$.
For $j \in \{1, \ldots, J\}$, let $x_j: [0, \infty) \to \bR^d$ be a $C^1$ trajectory.
%such that for $t$ large enough we have
%$|y_j'(t)| \leq c_0$ and $|y_j(t) - y_l(t)| \geq \frac{1}{c_0}$ for all $j \neq l$
%(the required smallness of $c_0$ depends on the eigenvalues and the eigenfunctions of the Schr\"odinger operators, see below).

We consider the heat equation with moving potentials:
\begin{equation*}
\label{eq:heat}
\partial_t u(t, x) = \Delta u(t, x) - \sum_{j=1}^J V_j(x - x_j(t))u(t, x).
\end{equation*}
By standard arguments, similar to the one given in Section~\ref{ssec:backward-heat},
this defines a strongly continuous dynamical system $T(\tau, t): L^2(\bR^d) \to L^2(\bR^d)$ for $\tau \geq t$.
\begin{proposition}
\label{prop:heat}
For any $\epsilon > 0$ there exists $\eta > 0$ such that if for $t$ large enough
$|x_j'(t)| \leq \eta$ and $|x_j(t) - x_l(t)| \geq \frac{1}{\eta}$ for all $j \neq l$,
then $T(\tau, t)$ has an exponential dichotomy with exponents $\epsilon$ and $\lambda - \epsilon$.
Moreover, $\codim X_\tx s = K$.
\end{proposition}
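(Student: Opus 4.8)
The plan is to apply Theorem~\ref{thm:main-cont-2} with $X = L^2(\bR^d)$; we may assume $0 < \epsilon < \lambda/2$. Normalise the eigenfunctions so that $\|\cY_{j,k}\|_{L^2} = 1$ and, for each $j$, $\{\cY_{j,k}\}_k$ is orthonormal, and recall that each $\cY_{j,k}$ lies in $H^2(\bR^d)$ and decays exponentially. Put $H_t := {-}\Delta + \sum_{j=1}^J V_j(\,\cdot - x_j(t))$, a self-adjoint operator on $L^2(\bR^d)$ with form domain $H^1$, set
\[
E(t) := \spn\big\{\cY_{j,k}(\,\cdot - x_j(t)) : 1 \leq j \leq J,\ 1 \leq k \leq k_j\big\},
\]
a $K$-dimensional subspace, let $\Pi_t$ be the orthogonal projection of $L^2$ onto $E(t)$, and define $I_t^+(v) := \|\Pi_t v\|_{L^2}$ and $I_t^-(v) := \|v - \Pi_t v\|_{L^2}$. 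Then $I_t^-(v)^2 + I_t^+(v)^2 = \|v\|_{L^2}^2$, so \eqref{eq:norm-equiv-c} holds with $c_1 = 2^{-1/2}$. Take $\alpha_{(j,k),t}^+ := \la \cY_{j,k}(\,\cdot - x_j(t)), \,\cdot\,\ra_{L^2} \in (L^2)^*$: once $\eta$ is small enough that the exponentially decaying bumps $\cY_{j,k}(\,\cdot - x_j(t))$ sit at mutual distances $\geq \eta^{-1}$, their Gram matrix is $o_\eta(1)$-close to the identity, so they form a near-orthonormal frame of $E(t)$ and \eqref{eq:Inp-form-c} holds with some $c_2 = c_2(K) > 0$. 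Since $I_t^-$ vanishes on $E(t)$, we get $E(t) \subset \cV_\tx u(c_3, t)$ for every $c_3$, which is \eqref{eq:contains-K-c-2}. Finally $t \mapsto \cY_{j,k}(\,\cdot - x_j(t)) \in L^2$ is $C^1$ with derivative ${-}\big(x_j'(t)\cdot\nabla\big)\cY_{j,k}(\,\cdot - x_j(t))$ of $L^2$-norm at most $\eta\|\nabla\cY_{j,k}\|_{L^2}$, so $\Pi_t$ is $C^1$ in $t$ with $\|\dot\Pi_t\|_{\scrL(L^2)} \lesssim \eta$, and $(t,v)\mapsto I_t^\pm(v)$ is continuous.

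The analytic heart is a \emph{decoupling lemma}: for $\eta$ small enough (in terms of $V_1,\dots,V_J$ and the accuracy needed below), $H_t$ has exactly $K$ spectral points below $-\kappa$, where $\kappa := \epsilon/4$, all lying within $o_\eta(1)$ of $\{-\lambda_{j,k}\}$; writing $P_t$ for the corresponding rank-$K$ spectral projection, one has $\|P_t - \Pi_t\|_{\scrL(L^2)} = o_\eta(1)$, $\|(H_t + \lambda_{j,k})\cY_{j,k}(\,\cdot - x_j(t))\|_{L^2} = o_\eta(1)$, and hence, for $w = \sum_{j,k} a_{j,k}\cY_{j,k}(\,\cdot - x_j(t)) \in E(t)$,
\[
\la H_t w, w\ra \leq {-}(\lambda - o_\eta(1))\|w\|_{L^2}^2, \qquad H_t w = {-}\sum_{j,k}\lambda_{j,k} a_{j,k}\,\cY_{j,k}(\,\cdot - x_j(t)) + r_w, \quad \|r_w\|_{L^2} = o_\eta(1)\|w\|_{L^2}.
\]
These are obtained by freezing $t$, localising with an IMS partition of unity $\chi_0^2 + \sum_j \chi_j^2 = 1$ subordinate to the balls $B(x_j(t), \tfrac13\eta^{-1})$ and the complementary region (so $|\nabla\chi_i| = O(\eta)$), observing that on $\supp\chi_0$ every $V_l$ has small $L^{d/2}$-norm — whence $H_t \geq {-}O(\eta^2)$ there by the Sobolev inequality — that on $\supp\chi_j$ one has $H_t = {-}\Delta + V_j(\,\cdot - x_j(t))$ up to a perturbation small in $L^{d/2}$, and the exponential decay of the $\cY_{j,k}$; the count of eigenvalues follows from a min-max comparison with $\bigoplus_j({-}\Delta + V_j)$ with Dirichlet conditions on the balls. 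I expect this lemma — in particular excluding spurious bound states generated in the interaction region, \emph{uniformly in $t$}, using only $V_j \in L^\infty \cap L^{d/2}$ — to be the main technical obstacle; everything else is soft.

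Granting the lemma, fix $v_t$ and put $u(s) := T(s,t)v_t$; by parabolic smoothing $s \mapsto u(s)$ is $C^1$ into $L^2$ on $(t,\tau]$, which (with the usual approximation at $s=t$) lets us differentiate. From $\partial_s u = {-}H_s u$, self-adjointness of $\Pi_s$ and $H_s$, and $\Pi_s^2 = \Pi_s$,
\[
\dd s I_s^+(u(s))^2 = \la \dot\Pi_s u(s), u(s)\ra - 2\la H_s \Pi_s u(s), u(s)\ra, \qquad \dd s I_s^-(u(s))^2 = {-}\la \dot\Pi_s u(s), u(s)\ra - 2\la H_s(1 - \Pi_s)u(s), u(s)\ra.
\]
Choose $c_3 := 1$, then $c_4 := \tfrac{1}{24}c_2^2$ so that \eqref{eq:intro-c3c4-c-2} holds, and finally take $\eta$ small. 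For \eqref{eq:ineq-Ip-c-2}: if $u(s) \in \cV_\tx u(c_4, s)$ for all $s \in [t,\tau]$ then, with $w := \Pi_s u(s)$, we have $\|u(s)-w\| \leq c_4^{-1}\|w\|$; the lemma gives $\la H_s w,w\ra \leq {-}(\lambda - o_\eta(1))\|w\|^2$ and $\la H_s w, u(s)-w\ra = \la r_w, u(s)-w\ra = O(o_\eta(1)c_4^{-1})\|w\|^2$ (as $w \in E(s)\perp u(s)-w$), while $|\la\dot\Pi_s u(s), u(s)\ra| \lesssim \eta(1+c_4^{-2})\|w\|^2$; hence $\dd s I_s^+(u(s))^2 \geq 2(\lambda-\epsilon)I_s^+(u(s))^2$, and Grönwall yields \eqref{eq:ineq-Ip-c-2} with exponent $\mu = \lambda - \epsilon$. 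For \eqref{eq:ineq-Im-c-2}: if $u(s) \in \cV_\tx s(c_3, s)$ for all $s \in [t,\tau]$, set $w' := u(s) - \Pi_s u(s)$, so $\|\Pi_s u(s)\| \leq c_3\|w'\|$ and, as $\Pi_s w' = 0$, $\|P_s w'\| = \|(P_s - \Pi_s)w'\| \leq o_\eta(1)\|w'\|$; since $1-P_s$ is the spectral projection of $H_s$ onto $[-\kappa,\infty)$, writing $\lambda_{\max} := \max_{j,k}\lambda_{j,k}$,
\[
\la H_s w', w'\ra \geq {-}\lambda_{\max}\|P_s w'\|^2 - \kappa\|w'\|^2 \geq {-}\big(\kappa + o_\eta(1)\big)\|w'\|^2,
\]
while $\la H_s w', \Pi_s u(s)\ra = \la w', r_{\Pi_s u(s)}\ra = O(o_\eta(1)c_3)\|w'\|^2$ and $|\la\dot\Pi_s u(s), u(s)\ra| \lesssim \eta(1+c_3^2)\|w'\|^2$; with $\kappa = \epsilon/4$ this gives $\dd s I_s^-(u(s))^2 \leq \epsilon\, I_s^-(u(s))^2$ for $\eta$ small, and Grönwall yields \eqref{eq:ineq-Im-c-2} (with exponent $\epsilon$, indeed with room to spare). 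Theorem~\ref{thm:main-cont-2} now provides an exponential dichotomy for $T(\tau,t)$ with exponents $\epsilon$ and $\lambda - \epsilon$ and with $\codim X_\tx s(t) = K$, which is the assertion of the proposition.
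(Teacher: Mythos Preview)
Your proof is correct and follows the same overall plan as the paper (apply Theorem~\ref{thm:main-cont-2}), but the paper makes a different and simpler choice of functionals that lets it bypass your decoupling lemma entirely.  The paper takes
\[
I_t^-(u) := \|u\|_{L^2}, \qquad I_t^+(u) := \sum_{j,k}\big|\la \cY_{j,k}(\cdot - x_j(t)), u\ra\big|,
\]
so $I_t^-$ is the full norm rather than the norm of the component orthogonal to $E(t)$.  With this choice the stable estimate~\eqref{eq:ineq-Im-c-2} reduces to showing $\tfrac{d}{dt}\|u\|_{L^2}^2 = -2\la H_t u, u\ra \leq 2\epsilon\|u\|_{L^2}^2$ on the stable cone, which is exactly the quadratic-form lower bound of Lemma~\ref{lem:coer-2}: one only needs $\la H_t u, u\ra \geq -\epsilon\|u\|^2 - C_0\sum\la\cY_{j,k}(\cdot-x_j), u\ra^2$, and this follows from the single-potential inequality~\eqref{eq:coer-1} by your own IMS localisation, without any eigenvalue counting or comparison of $P_t$ with $\Pi_t$.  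The unstable estimate is obtained, as in your argument, by differentiating each pairing $\la\cY_{j,k}(\cdot-x_j(t)), u(t)\ra$ directly and observing that the cross-potential and velocity terms are $o_\eta(1)\|u\|_{L^2}$.

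What your route buys is a sharper geometric picture (the spectral projection of $H_t$ is close to $\Pi_t$), but at the price of the harder lemma you flag as the main obstacle.  In fact only the easy parts of your lemma are actually used in your estimates: the approximate eigenvector identity $H_t\cY_{j,k}(\cdot-x_j) = -\lambda_{j,k}\cY_{j,k}(\cdot-x_j) + o_\eta(1)$ (trivial from exponential separation) and, for the stable direction, the lower bound $\la H_t w', w'\ra \geq -\epsilon\|w'\|^2$ for $w'\perp E(t)$, which is again just Lemma~\ref{lem:coer-2}.  The eigenvalue count and the bound $\|P_t-\Pi_t\|=o_\eta(1)$ can therefore be dropped; once you notice this, the two proofs converge.
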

We need the following fact.
\begin{lemma}
\label{lem:coer-1}
Let $V \in L^\frac d2(\bR^d)$ and let ${-}\lambda_1, \ldots, -\lambda_{K_0}$
be the strictly negative eigenvalues of $-\Delta + V$,
with corresponding eigenfunctions $\cY_1, \ldots, \cY_{K_0}$.
Then there exists $C_0 > 0$ such that for all $u \in L^2(\bR^d)$ the following inequality is true:
\begin{equation}
\label{eq:coer-1}
\int_{\bR^d} \big(|\grad u|^2 + V u^2 \big)\ud x + C_0\sum_{k=1}^K \la \cY_k, u\ra^2 \geq 0.
\end{equation}
\end{lemma}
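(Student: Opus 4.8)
The plan is to diagonalise the Schrödinger operator $H := {-}\Delta + V$ along its (finite-dimensional) negative eigenspace, reducing \eqref{eq:coer-1} to the elementary fact that $H$ is nonnegative on the orthogonal complement of that eigenspace. First I would dispose of a trivial case: the left-hand side of \eqref{eq:coer-1} involves the quadratic form $q(u) := \int_{\bR^d}\bigl(|\grad u|^2 + Vu^2\bigr)\ud x$, which I interpret as $+\infty$ for $u \in L^2(\bR^d)\setminus H^1(\bR^d)$, so it suffices to take $u \in H^1(\bR^d)$. On $H^1(\bR^d)$ the form $q$ is well defined and bounded below: splitting $V = V\mathbf 1_{\{|V|>M\}} + V\mathbf 1_{\{|V|\le M\}}$ and using Hölder together with the Sobolev embedding $\dot H^1(\bR^d)\hookrightarrow L^{2d/(d-2)}(\bR^d)$, the first piece is $-\Delta$-form-bounded with relative bound $C_S\|V\mathbf 1_{\{|V|>M\}}\|_{L^{d/2}}$, which is $<1$ once $M$ is large (dominated convergence), while the second piece is bounded. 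By the KLMN theorem, $q$ defines a self-adjoint operator $H$ with form domain $H^1(\bR^d)$, bounded below.

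Next I would pin down the spectral picture. Since $V \in L^{d/2}(\bR^d)$ is relatively form-compact with respect to $-\Delta$ (see the last paragraph), Weyl's theorem gives $\sigma_{\mathrm{ess}}(H) = \sigma_{\mathrm{ess}}({-}\Delta) = [0,\infty)$; hence the part of $\sigma(H)$ in $({-}\infty,0)$ consists precisely of the finitely many eigenvalues ${-}\lambda_1,\ldots,{-}\lambda_{K_0}$ of finite multiplicity, and $\{\cY_1,\ldots,\cY_{K_0}\}$ — which we may assume orthonormal (replacing them, if necessary, by an orthonormal basis of the negative eigenspace; this leaves $\sum_k\la\cY_k,u\ra^2 = \|Pu\|_{L^2}^2$ unchanged) — is a basis of the negative eigenspace $E_- := \spn\{\cY_1,\ldots,\cY_{K_0}\}$. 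Let $P$ denote the orthogonal projection onto $E_-$.

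Now I would split $u = Pu + u^\perp$ with $u^\perp := (I-P)u$. Each $\cY_k$ lies in $\Dom(H)\subset H^1(\bR^d)$, so $Pu \in \Dom(H)$ and $u^\perp\in H^1(\bR^d)\cap E_-^\perp$. Since $HPu = {-}\sum_k \lambda_k\la\cY_k,u\ra\cY_k \in E_-$ while $u^\perp\perp E_-$, the cross term in the form vanishes and $q(u) = q(Pu) + q(u^\perp)$. The restriction of $H$ to the reducing subspace $E_-^\perp$ is self-adjoint with spectrum in $[0,\infty)$, hence its quadratic form is nonnegative, so $q(u^\perp)\ge 0$. On the other hand, $Pu\in\Dom(H)$ gives
\[
q(Pu) = \la Pu, HPu\ra = {-}\sum_{k=1}^{K_0}\lambda_k\,\la\cY_k, u\ra^2 \ \ge\ {-}\Bigl(\max_{1\le k\le K_0}\lambda_k\Bigr)\sum_{k=1}^{K_0}\la\cY_k,u\ra^2 .
\]
Adding the two contributions yields $q(u)\ge{-}\bigl(\max_k\lambda_k\bigr)\sum_k\la\cY_k,u\ra^2$, which is exactly \eqref{eq:coer-1} with $C_0 := \max_{1\le k\le K_0}\lambda_k$.

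The only genuinely non-mechanical inputs are the two spectral-theoretic facts about $L^{d/2}$ potentials: $-\Delta$-form-boundedness with relative bound $0$ (used to construct $H$ and to know the $\cY_k$ lie in $H^1$), which is the Hölder–Sobolev splitting above; and relative form-compactness (used to locate the negative spectrum and hence to conclude $H\ge 0$ on $E_-^\perp$). For the latter I would approximate $V$ in $L^{d/2}$ by bounded compactly supported potentials, for which $({-}\Delta+1)^{-1/2}V({-}\Delta+1)^{-1/2}$ is compact by Rellich, and pass to the limit using the operator-norm bound $\|({-}\Delta+1)^{-1/2}V({-}\Delta+1)^{-1/2}\|_{\scrL(L^2)}\lesssim \|V\|_{L^{d/2}}$ (again Hölder–Sobolev). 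I expect this step to be the main technical obstacle, though it is entirely standard.
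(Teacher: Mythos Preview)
Your proof is correct and follows essentially the same approach as the paper: both decompose $L^2$ into the negative eigenspace $Y = \spn(\cY_1,\ldots,\cY_{K_0})$ and its orthogonal complement, use the Spectral Theorem to get nonnegativity on $Y^\perp$, and handle the finite-dimensional piece $Y$ directly by choosing $C_0$ large. Your write-up is more detailed about the spectral-theoretic setup (KLMN, relative form-compactness, Weyl) and even yields the explicit constant $C_0 = \max_k \lambda_k$, whereas the paper simply asserts positivity on $Y^\perp$ by the Spectral Theorem and takes $C_0$ large enough on $Y$.
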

\begin{proof}
The self-adjoint operator corresponding to the quadratic form in \eqref{eq:coer-1} is
\begin{equation*}
\label{eq:coer-1-op}
L := -\Delta + V + C_0 \sum_{k=1}^K \la \cY_k, \cdot\ra \cY_k.
\end{equation*}
The spaces $Y := \spn(\cY_1, \ldots, \cY_K)$ and its orthogonal complement (in $L^2$) $Y^\perp$ are invariant subspaces of $L$.
On $Y^\perp$, the quadratic form is positive by the Spectral Theorem. On $Y$, it is positive if we take $C_0$ large enough.
\end{proof}
%We need the following localized coercivity lemma.
%\begin{lemma}
%Let $V \in L^\infty(\bR^d) \cap L^\frac d2(\bR^d)$ and let $\cY_1, \ldots, \cY_K$ be the eigenfunctions of $-\Delta + V$
%corresponding to strictly negative eigenvalues. There exists $c_0 \in (0, \frac 13)$, $C > 0$ and $\rho > 0$ such that
%\begin{equation}
%(1-2c_0)\int_{|x| \leq \rho}|\grad v|^2 \ud x + c_0\int_{|x| \geq \rho}|\grad v|^2 \ud x + \int_\bR V v^2 \ud x
%\geq -C\sum_{k=1}^K \la \cY_k, v\ra^2.
%\end{equation}
%\end{lemma}
%\begin{proof}
%
%\end{proof}
This easily implies coercivity for multiple potentials.
\begin{lemma}
\label{lem:coer-2}
Let $V_1, \ldots V_J \in L^\frac d2(\bR^d)$ be potentials.
Assume $-\Delta + V_j$ has $K_j$ strictly negative eigenvalues $-\lambda_{j, 1}, \ldots, -\lambda_{j, k_j}$,
with corresponding eigenfunctions $\cY_{j, 1}, \ldots, \cY_{j, k_j}$.
There exists $C_0 > 0$ with the following property.
For any $\epsilon > 0$ there exists $\eta > 0$ such that if $|x_j - x_l| \geq \frac{1}{\eta}$ for all $j \neq l$,
then for all $u \in L^2(\bR^d)$ the following bound holds:
\begin{equation*}
\label{eq:coer-2}
\int_{\bR^d} \Big(|\grad u|^2 + \sum_{j=1}^J V_j(\cdot - x_j) u^2 \Big)\ud x
+ C_0\sum_{j=1}^J\sum_{k=1}^{K_j} \la \cY_{j,k}(\cdot - x_j), u\ra^2 \geq -\epsilon \|u\|_{L^2}^2.
\end{equation*}
\end{lemma}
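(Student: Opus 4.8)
The plan is to localise the problem to neighbourhoods of the individual centres $x_j$ by an IMS-type quadratic partition of unity at the separation scale $R:=1/\eta$, apply Lemma~\ref{lem:coer-1} on each piece, and absorb all error terms — which are small either because the potentials and eigenfunctions have small tails, or because the cut-offs vary slowly — into $\|u\|_{L^2}^2$. We may assume $u\in H^1(\bR^d)$, the inequality being trivial otherwise. Choose smooth $\chi_0,\dots,\chi_J\geq 0$ with $\sum_{l=0}^J\chi_l^2\equiv 1$ such that, for $l\geq 1$, $\chi_l\equiv 1$ on $B(x_l,R/3)$ and $\supp\chi_l\subset B(x_l,R/2)$ (these balls are pairwise disjoint once $|x_j-x_l|\geq 1/\eta$), while $\chi_0$ is supported away from $\bigcup_l B(x_l,R/3)$, and $\|\nabla\chi_l\|_{L^\infty}\lesssim\eta$ for all $l$; such a partition is standard. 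Writing $Q(u)$ for the left-hand side of the claimed inequality, the IMS localisation identity gives $\|\nabla u\|_{L^2}^2=\sum_l\|\nabla(\chi_l u)\|_{L^2}^2-\int\big(\sum_l|\nabla\chi_l|^2\big)u^2\ud x\geq\sum_l\|\nabla(\chi_l u)\|_{L^2}^2-C\eta^2\|u\|_{L^2}^2$, and, since $\sum_l\chi_l^2=1$, $\int\sum_j V_j(\cdot-x_j)u^2\ud x=\sum_l\int\sum_j V_j(\cdot-x_j)(\chi_l u)^2\ud x$.

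For $l\geq 1$, on $\supp\chi_l$ every $V_j$ with $j\neq l$ is evaluated at distance $\geq R/2$ from $x_j$, and on $\supp\chi_0$ every $V_j$ is evaluated at distance $\geq R/3$ from $x_j$; by Hölder and Sobolev each such ``cross term'' is therefore bounded by $\tau(\eta)\,C_{\mathrm{Sob}}^2\|\nabla(\chi_l u)\|_{L^2}^2$, where $\tau(\eta):=\max_j\|V_j\|_{L^{d/2}(\{|y|\geq R/3\})}\to 0$ because $V_j\in L^{d/2}$. Applying Lemma~\ref{lem:coer-1} to the translated potential $V_l(\cdot-x_l)$, whose negative eigenfunctions are $\cY_{l,k}(\cdot-x_l)$, gives for $l\geq 1$ the bound $\|\nabla(\chi_l u)\|_{L^2}^2+\int V_l(\cdot-x_l)(\chi_l u)^2\ud x\geq -C_l\sum_k\la\cY_{l,k}(\cdot-x_l),\chi_l u\ra^2$ with $C_l$ depending only on $V_l$; for $l=0$ one just keeps $\|\nabla(\chi_0 u)\|_{L^2}^2\geq 0$. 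Since $\la\cY_{j,k}(\cdot-x_j),u\ra-\la\cY_{j,k}(\cdot-x_j),\chi_j u\ra=\la\cY_{j,k}(\cdot-x_j),(1-\chi_j)u\ra$ has modulus $\leq\sigma(\eta)\|u\|_{L^2}$ with $\sigma(\eta):=\max_{j,k}\|\cY_{j,k}\|_{L^2(\{|y|\geq R/3\})}\to 0$, we get $\la\cY_{j,k}(\cdot-x_j),u\ra^2\geq\tfrac12\la\cY_{j,k}(\cdot-x_j),\chi_j u\ra^2-\sigma(\eta)^2\|u\|_{L^2}^2$. Choosing $C_0:=2\max_{1\leq l\leq J}C_l$, the term $C_0\sum_{j,k}\la\cY_{j,k}(\cdot-x_j),u\ra^2$ in $Q(u)$ dominates the negative contributions coming from Lemma~\ref{lem:coer-1}; after applying that lemma on the pieces $l\geq 1$ and discarding $\|\nabla(\chi_0 u)\|_{L^2}^2\geq 0$, the only surviving gradient term is a small fraction of $\sum_{l\geq 1}\|\nabla(\chi_l u)\|_{L^2}^2$, bounded below by $-\theta_\eta\|\nabla u\|_{L^2}^2-C\eta^2\|u\|_{L^2}^2$ with $\theta_\eta:=J\tau(\eta)C_{\mathrm{Sob}}^2\to 0$. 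Thus $Q(u)\geq -\theta_\eta\|\nabla u\|_{L^2}^2-\kappa(\eta)\|u\|_{L^2}^2$ for some $\kappa(\eta)\to 0$.

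It remains to remove $\|\nabla u\|_{L^2}^2$ from the right-hand side. Splitting each $V_j$ into its part of modulus $\leq N$ and its tail, and using $V_j\in L^{d/2}$ with the Sobolev embedding, for $N$ large enough one has $\int\sum_j|V_j(\cdot-x_j)|u^2\ud x\leq\tfrac12\|\nabla u\|_{L^2}^2+C'\|u\|_{L^2}^2$ with $C'$ independent of $\eta$; together with the positivity of the last term of $Q$ this yields $\|\nabla u\|_{L^2}^2\leq 2Q(u)+2C'\|u\|_{L^2}^2$. Substituting, $(1+2\theta_\eta)Q(u)\geq -(2\theta_\eta C'+\kappa(\eta))\|u\|_{L^2}^2$, hence $Q(u)\geq -(2\theta_\eta C'+\kappa(\eta))\|u\|_{L^2}^2$, which is $\geq -\epsilon\|u\|_{L^2}^2$ once $\eta$ is small. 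The main point to watch is the gradient ``budget'': on each localised piece the cross-terms cost a small fraction of $\|\nabla(\chi_l u)\|_{L^2}^2$, slightly more than what is left over after Lemma~\ref{lem:coer-1}, and this small deficit is only closed at the very end by the global form bound $\|\nabla u\|_{L^2}^2\lesssim Q(u)+\|u\|_{L^2}^2$.
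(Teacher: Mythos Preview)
Your proof is correct and follows essentially the same localisation strategy as the paper: cut off near each centre and apply Lemma~\ref{lem:coer-1} to the pieces. The paper's proof is a two-line sketch that uses $J$ separate cut-offs $u_j := \chi(\eta(\cdot - x_j))u$ (not a partition of unity) and simply says ``sum \eqref{eq:coer-1} over $j$'', leaving the error analysis to the reader. Your version is more explicit: you use a genuine IMS quadratic partition of unity (with an extra $\chi_0$ for the exterior region), carefully track the cross-potential tails via H\"older--Sobolev, compare $\la\cY_{j,k},u\ra$ with $\la\cY_{j,k},\chi_j u\ra$, and close the argument with the form-bound $\|\nabla u\|_{L^2}^2 \lesssim Q(u) + \|u\|_{L^2}^2$ to absorb the residual $-\theta_\eta\|\nabla u\|_{L^2}^2$. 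This last bootstrap step is a detail the paper's sketch omits but which is indeed needed, so your write-up is in fact more complete than the paper's own proof.
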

\begin{proof}
Let $\chi$ be a smooth cut-off function, with the support contained in $B(0, \frac 12)$ and equal $1$ on $B(0, \frac 14)$.
For $j \in \{1, \ldots, J\}$, let $u_j(x) := \chi(\eta(x - x_j))u(x)$.
We obtain the result by summing \eqref{eq:coer-1}, applied for $u_j$ instead of $u$ for $j \in \{1, \ldots, J\}$.
\end{proof}
\begin{proof}[Proof of Proposition~\ref{prop:heat}]
For $t \geq t_0$ and $v \in L^2(\bR^d)$, we define
\begin{equation*}
I_t^+(u) := \sum_{j=1}^J \sum_{k=1}^{K_j} |\la \cY_{j, k}(\cdot - x_j(t)), u\ra|
\end{equation*}
and
\begin{equation*}
I_t^-(u) := \|u\|_{L^2}.
\end{equation*}
We have
\begin{equation*}
\dd t\|u(t)\|_{L^2}^2 = {-}\int_{\bR^d} \Big(|\grad u(t)|^2 + \sum_{j=1}^J V_j(\cdot - x_{j}) u(t)^2 \Big)\ud x,
\end{equation*}
so the required sub-exponential growth of $I^-$ \eqref{eq:ineq-Im-c-2} follows from Lemma~\ref{lem:coer-2},
provided that $c_3$ is taken small enough.

In order to check \eqref{eq:ineq-Ip-c-2}, we compute
\begin{equation*}
\begin{aligned}
&\dd t\la \cY_{j_0, k_0}(\cdot - x_{j_0}(t)), u(t)\ra = \la \cY_{j_0, k_0}(\cdot - x_{j_0}(t)), \Delta u(t) - V_{j_0}(\cdot - x_{j_0}(t))u(t)\ra \\
&-x_{j_0}'(t)\cdot \la \grad \cY_{j_0, k_0}(\cdot - x_{j_0}(t)), u(t)\ra - \sum_{j\neq j_0}\la \cY_{j_0, k_0}(\cdot - x_{j_0}(t)),
V_j(\cdot - x_j(t))u(t)\ra.
\end{aligned}
\end{equation*}
We see that the second line is negligible when $\eta$ is small, more precisely for any $\wt\varepsilon > 0$
\begin{equation*}
\Big|\dd t\la \cY_{j_0, k_0}(\cdot - x_{j_0}(t)), u(t)\ra - \lambda_{j_0, k_0}\la \cY_{j_0, k_0}(\cdot - x_{j_0}(t)), u(t)\ra \Big|
\leq \wt\varepsilon \|u(t)\|_{L^2}.
\end{equation*}
if $\eta$ is small enough. From this we deduce
\begin{equation*}
\Big|\dd t|\la \cY_{j_0, k_0}(\cdot - x_{j_0}(t)), u(t)\ra| - \lambda_{j_0, k_0}|\la \cY_{j_0, k_0}(\cdot - x_{j_0}(t)), u(t)\ra| \Big|
\leq \wt\varepsilon \|u(t)\|_{L^2},
\end{equation*}
which in turn yields
\begin{equation*}
\dd t I^+_t(u(t)) \geq \lambda I^+_t(u(t)) - K\wt \varepsilon\|u(t)\|_{L^2}.
\end{equation*}
It suffices to take $c_4$ some number satisfying \eqref{eq:intro-c3c4-c-2} and $\wt \varepsilon = \frac{c_4\varepsilon}{K}$.
\end{proof}
%\begin{remark}
%We could work in other $L^2$-based Sobolev spaces, but we did not know how to obtain exponential dichotomies
%in $L^\infty$ or $C^\alpha$, spaces which are often used in nonlinear problems.
%\end{remark}

\section{Klein-Gordon equation with potentials having almost constant velocity}
\label{sec:wave}

\subsection{One potential}
\label{ssec:kg-one}
The purpose of this section is to relate properties of the flow with one moving
potential to the properties of the corresponding flow with a stationary potential.
We will use many facts from \cite{CoMa-multi}.

We will need the Lorentz boosts. Let $\beta \in \bR^d$, $|\beta| < 1$, be a velocity vector.
For a function $\phi : \bR^d \to \bR$ we define
\begin{equation*}
\label{eq:lorentz-boost}
\phi_{\beta}(x) := \phi(\Lambda_{\beta}x),\quad \Lambda_{\beta}x := x + (\gamma-1)\frac{(\beta\cdot x)\beta}{|\beta|^2},\quad \gamma := \frac{1}{\sqrt{1-|\beta|^2}}.
\end{equation*}
With this notation, the Lorentz transformation is given by
\begin{equation*}
\label{eq:lorentz}
(t', x') = \big(\gamma(t - \beta \cdot x),\ \Lambda_\beta x - \gamma\beta t\big)
= \big(\gamma(t - \beta\cdot x),\ \Lambda_\beta(x - \beta t)\big).
\end{equation*}

For a pair of functions $\bR^d \to \bR$,  $\bs \phi = (\phi, \dot\phi)$, we will also write
\begin{equation*}
\bs \phi_\beta := (\phi_\beta, \dot \phi_\beta).
\end{equation*}
%We will often make use of the following formulas obtained from the chain rule:
%\begin{equation}
%\label{eq:lapl-boost}
%\begin{aligned}
%&\beta\cdot\grad(\phi_\beta) = \gamma(\beta\cdot\grad \phi)_\beta, \\
%&\Delta(\phi_\beta) = (\Delta \phi)_\beta + \gamma^2\big((\beta\cdot\grad)^2\phi\big)_\beta, \\
%&(\Delta \phi)_\beta = \Delta(\phi_\beta) - (\beta\cdot\grad)^2(\phi_\beta).
%\end{aligned}
%\end{equation}
%For example, if $\beta = (\beta, 0, \ldots, 0)$ with $\beta \in (-1, 1)$, then
%$\Lambda_\beta x = (\gamma x_1, x')$, $\Delta(\phi_\beta) = \gamma^2 (\partial_{x_1}^2\phi)_\beta + (\Delta' \phi)_\beta$ and $\beta\cdot \grad = \beta\partial_{x_1}$, so the formulas follow from the fact that $1 + \beta^2\gamma^2 = \gamma^2$.

Let $V$ be a smooth exponentially decaying potential.
Let $\beta\in \bR^d$ with $|\beta| < 1$
and $\xi \in \bR^d$. We consider the following linear
Klein-Gordon equation:
\begin{equation}
\label{eq:kg-one-2ord}
\partial_t^2 u(t, x) = \Delta u(t, x) - u(t, x) - V_\beta(x - \beta t - \xi)u(t, x),\quad (t, x) \in \bR \times \bR^d.
\end{equation}
We see that $u(t, x)$ is a solution of \eqref{eq:kg-one-2ord} if and only if $u(t, x) = w(t', x')$,
where
\begin{equation}
\label{eq:t'x'def}
(t', x') = (\gamma(t - \beta\cdot x),\ \Lambda_\beta(x - \beta t - \xi))
\end{equation}
and $w(t, x)$ is a solution of
\begin{equation}
\label{eq:kg-one-2ord-tindep}
\partial_t^2 w = \Delta w - w - V w.
\end{equation}
The following observation will be useful. It expresses the conservation of charge
(for a complex-valued solution $w = w_1 + iw_2)$ and energy
for equation \eqref{eq:kg-one-2ord-tindep}, which is a consequence of Noether's Theorem.
\begin{lemma}
\label{lem:kg-div-free}
Let $w, w_1, w_2$ be smooth solutions of \eqref{eq:kg-one-2ord-tindep}. The following vector fields in $\bR^{1+d}$
are divergence free:
\begin{align*}
F(w_1, w_2) &= (w_1\partial_t w_2 - w_2 \partial_t w_1, -w_1\grad w_2 + w_2\grad w_1), \\
G(w) &= \Big(\frac 12\big((\partial_t w)^2 + (\grad w)^2 + (1+V)w^2\big), -\partial_t w\grad w\Big).
\end{align*}
\end{lemma}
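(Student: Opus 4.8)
The plan is to verify directly that each of the two vector fields has vanishing space-time divergence in $\bR^{1+d}$, using nothing beyond the Leibniz rule and the equation \eqref{eq:kg-one-2ord-tindep}. The underlying reason the identities hold is Noether's theorem: $F$ is the current associated with the phase rotation symmetry of \eqref{eq:kg-one-2ord-tindep} (viewing $w_1 + \iii w_2$ as a complex solution), and $G$ is the current associated with time-translation invariance, which is available precisely because $V = V(x)$ does not depend on $t$. But since the equation is given in closed form, the shortest route is a direct computation.

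First, for $F(w_1, w_2)$ I would compute
\[
\partial_t\big(w_1\partial_t w_2 - w_2\partial_t w_1\big) + \operatorname{div}\big({-}w_1\grad w_2 + w_2\grad w_1\big).
\]
Expanding by the product rule, the first-order terms $\partial_t w_1\,\partial_t w_2$ and $\grad w_1\cdot\grad w_2$ cancel, leaving $w_1(\partial_t^2 w_2 - \Delta w_2) - w_2(\partial_t^2 w_1 - \Delta w_1)$. Substituting $\partial_t^2 w_i - \Delta w_i = -(1+V)w_i$ from \eqref{eq:kg-one-2ord-tindep} turns this into $-(1+V)(w_1 w_2 - w_2 w_1) = 0$.

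Next, for $G(w)$ I would compute
\[
\partial_t\Big(\tfrac12\big((\partial_t w)^2 + (\grad w)^2 + (1+V)w^2\big)\Big) + \operatorname{div}\big({-}\partial_t w\,\grad w\big).
\]
Here it is essential that $\partial_t V = 0$, so $\partial_t\big(\tfrac12(1+V)w^2\big) = (1+V)w\,\partial_t w$. The terms $\grad w\cdot\grad\partial_t w$ coming from the time derivative and ${-}\grad\partial_t w\cdot\grad w$ coming from the spatial divergence cancel, and one is left with $\partial_t w\,\big(\partial_t^2 w - \Delta w + (1+V)w\big)$, which vanishes by \eqref{eq:kg-one-2ord-tindep}.

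There is no genuine obstacle in this lemma; the only points requiring attention are the time-independence of $V$ in the energy identity and bookkeeping of signs in the spatial divergence term ${-}\partial_t w\,\grad w$. I would present both computations in a couple of lines each and conclude.
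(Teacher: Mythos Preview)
Your proposal is correct and follows essentially the same direct computation as the paper's proof: expand the space-time divergence of each vector field, observe the first-order cancellations, and use \eqref{eq:kg-one-2ord-tindep} (together with $\partial_t V = 0$ for $G$) to see that the remaining terms vanish. The paper's version is line-for-line the same argument.
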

\begin{proof}
We have
\begin{equation*}
\begin{aligned}
&\partial_t (w_1\partial_t w_2 - w_2\partial_t w_1) + \div({-}w_1\grad w_2 + w_2\grad w_1)
= w_1\partial_t^2 w_2 - w_2\partial_t^2 w_1 - w_1\Delta w_2 + w_2 \Delta w_1 \\
&= w_1 (\partial_t^2 w_2 - \Delta w_2) - w_2(\partial_t^2 w_1 - \Delta w_1) = -w_1(w_2 + Vw_2) + w_2(w_1 + Vw_1) = 0
\end{aligned}
\end{equation*}
and
\begin{equation*}
\begin{aligned}
&\frac 12\partial_t \big((\partial_t w)^2 + (\grad w)^2 + (1+V)w^2\big)
+ \div({-}\partial_t w\grad w) \\
&= \partial_t^2 w \partial_t w + \partial_t\grad w\cdot \grad w+ (1+V)w\partial_t w - \partial_t \grad w\cdot \grad w - \partial_t w \Delta w  \\
&= \partial_t w(\partial_t^2 w - \Delta w + (1+V) w) = 0.
\end{aligned}
\end{equation*}
\end{proof}
We can write \eqref{eq:kg-one-2ord} as a dynamical system:
\begin{equation}
\label{eq:kg-one-ham}
\partial_t \bs u(t) = JH_\beta(\beta t + \xi) \bs u(t),
\end{equation}
where
\begin{equation}
\label{eq:Hbeta-def}
J := \begin{pmatrix} 0 & 1 \\ -1 & 0 \end{pmatrix},\qquad
H_\beta(\xi) := \begin{pmatrix} -\Delta +1 + V_\beta(\cdot - \xi) & 0 \\ 0 & 1 \end{pmatrix}.
\end{equation}

The Schr\"odinger operator $L = -\Delta + 1 + V$ has the essential spectrum $[1, \infty)$,
and might have a finite number of eigenvalues in $(-\infty, \frac 12)$.
Let $K$ be the number of strictly negative eigenvalues (counted with multiplicities) and let $M := \dim\ker L$.
Let $-\nu_1^2, \ldots -\nu_K^2$ (with $\nu_k > 0$) be the strictly negative eigenvalues
and let $(\phi_k)_{k = 1, \ldots, K}$ and $(\phi_m^0)_{j = 1, \ldots, M}$
be orthonormal (in $L^2$) families such that
\begin{align*}
L \phi_k &= -\nu_k^2 \phi_k, \\
L \phi_m^0 &= 0.
\end{align*}
%Denote $\nu := \min\{\nu_{j, k}: 1 \leq j \leq J,\ 1 \leq k \leq K_j\}$.
\begin{lemma}
\label{lem:eigen-decay}
For any $\varepsilon > 0$ and $n \in \bN^d$ there exists $C > 0$ such that
\begin{align*}
|\partial^n\phi_k(x)| &\leq C\eee^{-(1-\varepsilon)\sqrt{1+\nu_k^2}|x|}, \quad \text{for all }k \in \{1, \ldots, K\},\ x \in \bR^d, \\
|\partial^n\phi_m^0(x)| &\leq C\eee^{-(1-\varepsilon)|x|}, \quad \text{for all }m \in \{1, \ldots, M\},\ x \in \bR^d. 
\end{align*}
\end{lemma}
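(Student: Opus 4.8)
The plan is to prove exponential decay of the eigenfunctions by an Agmon-type barrier argument together with the maximum principle on an exterior region, and then pass to derivatives by interior elliptic estimates. First I would record some a priori regularity: since $V \in L^\infty$, $\phi_k \in L^2$ and $({-}\Delta + 1 + V)\phi_k = {-}\nu_k^2\phi_k$, a standard elliptic bootstrap (using that $V$ is smooth with all derivatives bounded) shows $\phi_k \in C^\infty(\bR^d) \cap H^s(\bR^d)$ for every $s$; in particular $\phi_k$ is bounded and $\phi_k(x) \to 0$ as $|x| \to \infty$. The same applies to each $\phi_m^0$.

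Next, fix $k$, set $\mu := (1-\varepsilon)\sqrt{1+\nu_k^2}$ and let $w(x) := \eee^{-\mu\langle x\rangle}$ with $\langle x\rangle := \sqrt{1+|x|^2}$. A direct computation gives $({-}\Delta + 1 + \nu_k^2)w = \big(1 + \nu_k^2 - \mu^2|\grad\langle x\rangle|^2 + \mu\Delta\langle x\rangle\big)w$; since $|\grad\langle x\rangle|^2 \to 1$ and $\Delta\langle x\rangle \to 0$ as $|x|\to\infty$, while $1 + \nu_k^2 - \mu^2 = (1+\nu_k^2)\big(1 - (1-\varepsilon)^2\big) > 0$, there are $\delta \in (0,1)$ and $R > 0$ with $({-}\Delta + 1 + \nu_k^2)w \geq 2\delta w$ and (using $V(x)\to 0$) $|V| \leq \delta$ on $\{|x| \geq R\}$. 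Hence $w$ is a supersolution of $L + \nu_k^2 = {-}\Delta + 1 + \nu_k^2 + V$ on $\{|x|\geq R\}$: there $(L+\nu_k^2)w \geq \delta w > 0$. Now put $A := \big(\sup_{|x|=R}|\phi_k(x)|\big)\,\eee^{\mu\sqrt{1+R^2}}$, so $|\phi_k| \leq A w$ on $\{|x|=R\}$, and consider $u := \pm\phi_k - A w$ on $\{|x| \geq R\}$. Since $(L+\nu_k^2)(\pm\phi_k) = 0$ one has $(L+\nu_k^2)u = {-}A(L+\nu_k^2)w \leq 0$, while $u \leq 0$ on $\{|x|=R\}$ and $u(x) \to 0$ at infinity; as the zeroth-order coefficient $1+\nu_k^2+V$ of $L+\nu_k^2$ is positive on $\{|x|\geq R\}$, $u$ cannot attain a positive interior maximum, so $u \leq 0$, i.e. $|\phi_k| \leq Aw \leq A\eee^{-\mu|x|}$ on $\{|x|\geq R\}$. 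Enlarging the constant to cover $\{|x|<R\}$ gives the claimed bound $|\phi_k(x)| \leq C\eee^{-(1-\varepsilon)\sqrt{1+\nu_k^2}|x|}$; the bound for $\phi_m^0$ is identical with $\nu_m$ replaced by $0$.

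To obtain the derivative bounds I would rewrite the equation as ${-}\Delta\phi_k = ({-}\nu_k^2-1-V)\phi_k =: f_k$ and run an interior elliptic bootstrap on unit balls: since $V$ and all its derivatives are bounded, Schauder estimates give, for every multi-index $n$, $\|\phi_k\|_{C^{|n|}(B(x,1/2))} \lesssim_n \|\phi_k\|_{C^0(B(x,1))} + \|f_k\|_{C^{|n|-2,\alpha}(B(x,1))} \lesssim_n \|\phi_k\|_{C^{|n|-2,\alpha}(B(x,1))}$, and iterating on slightly shrinking balls yields $\|\phi_k\|_{C^{|n|}(B(x,1/2))} \lesssim_n \|\phi_k\|_{C^0(B(x,2))}$. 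Combining this with the pointwise decay already established (applied with $\varepsilon/2$ in place of $\varepsilon$, to absorb the loss of a unit ball in the exponent) gives $|\partial^n\phi_k(x)| \leq C\eee^{-(1-\varepsilon)\sqrt{1+\nu_k^2}|x|}$, and likewise $|\partial^n\phi_m^0(x)| \leq C\eee^{-(1-\varepsilon)|x|}$.

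The only genuinely delicate point is the barrier step on the unbounded region $\{|x|\geq R\}$: one must check that $w$ is a supersolution there — which is exactly what forces the decay exponent to be strictly below $\sqrt{1+\nu_k^2}$, hence the factor $(1-\varepsilon)$ — and one needs the a priori vanishing $\phi_k(x) \to 0$ from the first step in order to apply the maximum principle legitimately on an exterior domain. The regularity bootstrap, the derivative estimates and the treatment of the bounded region $\{|x|<R\}$ are all routine.
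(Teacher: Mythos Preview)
Your proof is correct and follows essentially the same approach as the paper: both arguments compare the eigenfunction with an exponentially decaying barrier and derive a contradiction via the maximum principle at a putative point where the difference is largest, then upgrade to derivatives by interior elliptic regularity. Your version is slightly more polished (smoothing the barrier via $\langle x\rangle$ and explicitly restricting to an exterior region), but the substance is the same.
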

\begin{proof}
We only prove the second inequality, as the first one can be obtained in the same way.
Take $C$ large and suppose there exists $x \in \bR$ such that $\phi_m^0(x) > C\eee^{-(1-\varepsilon)|x|}$.
By interior regularity we have $\lim_{|x| \to \infty}|\phi_m^0(x)| = 0$, so there exists $x_0 \in \bR^d$ such that
\begin{equation*}
\phi_m^0(x_0) - C\eee^{-(1-\varepsilon)|x_0|} = \sup_{x \in \bR^d}\big(\phi_m^0(x) - C\eee^{-(1-\varepsilon)|x|}\big) > 0,
\end{equation*}
which implies
\begin{equation*}
\Delta (\phi_m^0(x_0)) \leq C \Delta (\eee^{-(1-\varepsilon)|x_0|}).
\end{equation*}
If $C$ is large, then $|x_0|$ is large as well. It is easy to see from the formula for the Laplacian in radial coordinates
that if $|x_0|$ is large enough, then $\Delta (\eee^{-(1-\varepsilon)|x_0|}) \leq (1-\epsilon)\eee^{-(1-\varepsilon)|x_0|}$.
We obtain
\begin{equation*}
\Delta (\phi_m^0(x_0)) \leq C(1-\epsilon)\eee^{-(1-\varepsilon)|x_0|} < (1-\epsilon) \phi_m^0(x_0),
\end{equation*}
which is impossible for $|x_0|$ large.

The bound $\phi_m^0(x) > -C\eee^{-(1-\varepsilon)|x_0|}$ is obtained by considering $-\phi_m^0$ instead of $\phi_m^0$.

The bound on derivatives follows from interior regularity.
\end{proof}
By the Spectral Theorem,
\begin{equation}
\label{eq:L2-coer}
\la \phi_k, \psi\ra = \la \phi_m^0, \psi\ra = 0 \quad\Rightarrow\quad \la \psi, L\psi\ra \geq c\|\psi\|_{L^2}^2.
\end{equation}
Note that we also have
\begin{equation}
\label{eq:H1-coer}
\la \phi_k, \psi\ra = \la \phi_m^0, \psi\ra = 0 \quad\Rightarrow\quad \la \psi, L\psi\ra \geq c\|\psi\|_{H^1}^2.
\end{equation}
Indeed, \eqref{eq:L2-coer} implies that $\la \psi, (a(-\Delta + 1) + V)\psi\ra \geq 0$
for some $a < 1$, which yields \eqref{eq:H1-coer} with $c = 1-a$.

Following \cite{CoMu2014} and \cite[Lemma 1]{CoMa-multi}, we now give explicit formulas for the stable, unstable
and null components of the flow \eqref{eq:kg-one-ham}. We define
\begin{align}
\cY_{k, \beta}^-(x) &:= \eee^{\gamma\nu_k\beta\cdot x}(\phi_k, \gamma\beta\cdot\grad \phi_k+\gamma\nu_k\phi_k)_\beta(x), \label{eq:kg-Ym-def} \\
\cY_{k, \beta}^+(x) &:= \eee^{-\gamma\nu_k\beta\cdot x}
(\phi_k, -\gamma\beta\cdot\grad \phi_k+\gamma\nu_k\phi_k)_\beta(x),\label{eq:kg-Yp-def} 
\\ \cY_{m, \beta}^0(x) &:= (\phi_m^0, -\gamma\beta\cdot\grad \phi_m^0)_\beta(x),\label{eq:kg-Y0-def} 
\\ \alpha_{k, \beta}^-(x) &:= J\cY_{k, \beta}^+(x) = \eee^{-\gamma\nu_k\beta\cdot x}(\gamma\beta\cdot\grad \phi_k-\gamma\nu_k\phi_k, \phi_k)_\beta(x), \label{eq:kg-am-def} \\
\alpha_{k, \beta}^+(x) &:= J\cY_{k, \beta}^-(x) = \eee^{\gamma\nu_k\beta\cdot x}({-}\gamma\beta\cdot\grad \phi_k-\gamma\nu_k\phi_k, \phi_k)_\beta(x), \label{eq:kg-ap-def} \\
\alpha_{m, \beta}^0(x) &:= J\cY_{m, \beta}^0(x) = (\gamma \beta\cdot\grad \phi_m^0, \phi_m^0)_\beta(x).\label{eq:kg-a0-def} 
\end{align}
Since $|\Lambda_\beta x| \geq \gamma|x|$, Lemma~\ref{lem:eigen-decay} implies that all these functions
are smooth and exponentially decaying, uniformly in $\beta$ of $|\beta| \leq v < 1$. Observe also that
\begin{equation}
\label{eq:beta-derivative}
\|\partial_\beta \alpha_{k, \beta}^-\|_{L^2} +\|\partial_\beta \alpha_{k, \beta}^+\|_{L^2}+\|\partial_\beta \alpha_{m, \beta}^0\|_{L^2}
+\|\partial_\beta \cY_{k, \beta}^-\|_{L^2}+\|\partial_\beta \cY_{k, \beta}^+\|_{L^2}+\|\partial_\beta \cY_{m, \beta}^0\|_{L^2} \lesssim 1.
\end{equation}
\begin{lemma}
\label{lem:kg-one-comp}
The following functions are solutions of \eqref{eq:kg-one-ham}:
\begin{align}
\bs u(t) &= \eee^{-\frac{\nu_k}{\gamma} t}\cY_{k, \beta}^-(\cdot - \beta t - \xi), \label{eq:part-sol-1} \\
\bs u(t) &= \eee^{\frac{\nu_k}{\gamma} t}\cY_{k, \beta}^+(\cdot - \beta t - \xi), \label{eq:part-sol-2} \\
\bs u(t) &= \cY_{m, \beta}^0(\cdot - \beta t - \xi).\label{eq:part-sol-3} 
\end{align}
If $\bs u(t)$ is any solution of \eqref{eq:kg-one-ham}, then
\begin{align}
\dd t\la \alpha_{k, \beta}^-(\cdot - \beta t - \xi), \bs u(t)\ra &= -\frac{\nu_k}{\gamma} \la \alpha_k^-(\cdot - \beta t - \xi), \bs u(t)\ra, \label{eq:covector-1} \\
\dd t\la \alpha_{k, \beta}^+(\cdot - \beta t - \xi), \bs u(t)\ra &= \frac{\nu_k}{\gamma} \la \alpha_k^+(\cdot - \beta t - \xi), \bs u(t)\ra, \label{eq:covector-2} \\
\dd t\la \alpha_{m, \beta}^0(\cdot - \beta t - \xi), \bs u(t)\ra &= 0.\label{eq:covector-3} 
\end{align}
\end{lemma}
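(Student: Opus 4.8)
The plan is to obtain the six assertions in two stages: first the explicit solutions \eqref{eq:part-sol-1}--\eqref{eq:part-sol-3}, by transporting through the Lorentz change of variables \eqref{eq:t'x'def} recalled above the obvious solutions of the autonomous equation \eqref{eq:kg-one-2ord-tindep}; then the covector identities \eqref{eq:covector-1}--\eqref{eq:covector-3}, from these solution formulas together with the conservation of the symplectic pairing of two solutions of \eqref{eq:kg-one-ham}.

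For the solutions, note that $L\phi_k = -\nu_k^2\phi_k$ reads $\Delta\phi_k - \phi_k - V\phi_k = \nu_k^2\phi_k$, so $w(t, x) := \eee^{\pm\nu_k t}\phi_k(x)$ solves \eqref{eq:kg-one-2ord-tindep}, and so does $w(t, x) := \phi_m^0(x)$ since $L\phi_m^0 = 0$. By the equivalence stated just above \eqref{eq:kg-one-2ord-tindep}, $u(t, x) := w(t', x')$ with $(t', x')$ as in \eqref{eq:t'x'def} then solves \eqref{eq:kg-one-2ord}, so that $\bs u(t) := (u(t), \partial_t u(t))$ solves \eqref{eq:kg-one-ham}. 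It remains to compute $\partial_t u$ and identify the right-hand sides of \eqref{eq:part-sol-1}--\eqref{eq:part-sol-3}: writing $y := x - \beta t - \xi$ and using $\beta\cdot\grad(\psi_\beta) = \gamma(\beta\cdot\grad\psi)_\beta$ (a consequence of $\Lambda_\beta\beta = \gamma\beta$) together with $\gamma^{-2} + |\beta|^2 = 1$, one checks that $\bs u(t)$ coincides with the right-hand side of \eqref{eq:part-sol-1}, \eqref{eq:part-sol-2} or \eqref{eq:part-sol-3} up to an immaterial $\xi$-dependent multiplicative constant, which may be dropped since \eqref{eq:kg-one-ham} is linear. (One can equally well substitute the claimed formulas straight into \eqref{eq:kg-one-ham}, which amounts to the same computation and makes visible that the definitions \eqref{eq:kg-Ym-def}--\eqref{eq:kg-a0-def} were tailored precisely for it.)

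For the covector identities, the key point is that whenever $\bs u(t)$ and $\bs v(t)$ both solve \eqref{eq:kg-one-ham}, the pairing $t \mapsto \la J\bs v(t), \bs u(t)\ra$ is constant; this is the integrated form of the divergence-free charge current $F$ of Lemma~\ref{lem:kg-div-free}, which remains divergence free for the moving potential of \eqref{eq:kg-one-2ord} since that potential cancels in $\div F$. Directly, using $J^2 = -\Id$, $J^* = -J$ and self-adjointness of $H_\beta(\beta t + \xi)$ (the terms involving $-\Delta$ being understood after integration by parts onto the smooth exponentially decaying component, which is legitimate and, by Lemma~\ref{lem:eigen-decay}, makes all integrals converge for $\bs u$ in the energy space),
\[
\dd t\la J\bs v, \bs u\ra = \la J^2 H_\beta\bs v, \bs u\ra + \la J\bs v, JH_\beta\bs u\ra = -\la H_\beta\bs v, \bs u\ra + \la H_\beta\bs v, \bs u\ra = 0 .
\]
Since $\alpha_{k, \beta}^- = J\cY_{k, \beta}^+$ by \eqref{eq:kg-am-def} and, by the first stage, $\bs v(t) := \eee^{\nu_k t/\gamma}\cY_{k, \beta}^+(\,\cdot - \beta t - \xi)$ solves \eqref{eq:kg-one-ham}, we get for any solution $\bs u(t)$
\[
\la \alpha_{k, \beta}^-(\,\cdot - \beta t - \xi), \bs u(t)\ra = \eee^{-\nu_k t/\gamma}\la J\bs v(t), \bs u(t)\ra = \eee^{-\nu_k t/\gamma}\cdot(\mathrm{const}),
\]
and differentiation gives \eqref{eq:covector-1}. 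Identity \eqref{eq:covector-2} follows the same way from $\alpha_{k, \beta}^+ = J\cY_{k, \beta}^-$ and the solution $\eee^{-\nu_k t/\gamma}\cY_{k, \beta}^-(\,\cdot - \beta t - \xi)$, and \eqref{eq:covector-3} from $\alpha_{m, \beta}^0 = J\cY_{m, \beta}^0$ and the solution $\cY_{m, \beta}^0(\,\cdot - \beta t - \xi)$, whose pairing is already constant.

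The only real difficulty is bookkeeping in the first stage: one must track the Lorentz factor $\gamma$ and the exponential weights $\eee^{\pm\gamma\nu_k\beta\cdot x}$ accurately enough to confirm that $(u, \partial_t u)$ of the transported solution is exactly the vector $\cY$ prescribed by \eqref{eq:kg-Ym-def}--\eqref{eq:kg-Y0-def}; everything else is formal.
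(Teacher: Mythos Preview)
Your proposal is correct and follows essentially the same route as the paper: both obtain \eqref{eq:part-sol-1}--\eqref{eq:part-sol-3} by transporting the obvious solutions $\eee^{\pm\nu_k t}\phi_k$ and $\phi_m^0$ of \eqref{eq:kg-one-2ord-tindep} through the Lorentz change of variables \eqref{eq:t'x'def} and identifying the result with the defined $\cY$'s up to a $\xi$-dependent constant, and both derive \eqref{eq:covector-1}--\eqref{eq:covector-3} from the conservation of the symplectic pairing $\la J\bs v(t), \bs u(t)\ra$ for two solutions of \eqref{eq:kg-one-ham}, taking $\bs v$ to be the explicit solutions just constructed and using $\alpha = J\cY$.
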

\begin{proof}
It is easy to see that $w(t, x) = \eee^{\nu_k t}\phi_k(x)$ is a solution of \eqref{eq:kg-one-2ord-tindep},
which implies that
\begin{equation}
\label{eq:part-sol-comp-1}
u(t, x) = w(t', x') = \eee^{\gamma\nu_k(t - \beta \cdot x)}\phi_k(\Lambda_\beta(x - \beta t - \xi))
\end{equation}
is a solution of \eqref{eq:kg-one-2ord}. Now we observe that
\begin{equation}
\label{eq:exp-manip}
\eee^{\gamma\nu_k(t - \beta \cdot x)} = \eee^{-\gamma\nu_k\beta\cdot\xi}\eee^{-\gamma|\beta|^2\nu_k t}\eee^{\gamma\nu_k t}
\eee^{-\gamma\nu_k\beta\cdot(x - \beta t - \xi)}
= \eee^{-\gamma\nu_k\beta\cdot\xi}\eee^{\frac{\nu_k}{\gamma}t}\eee^{-\gamma\nu_k\beta\cdot(x - \beta t - \xi)}.
\end{equation}
The first factor is constant and can be discarded. The second factor is the exponential growth factor in \eqref{eq:part-sol-2}. Finally,
$
\eee^{-\gamma\nu_k\beta\cdot(x - \beta t - \xi)}\phi_k(\Lambda_\beta(x - \beta t - \xi))
$
is precisely the first component of $\cY_{k, \beta}^+$. The second component of $\cY_{k, \beta}^+$ is found by computing
the time derivative of \eqref{eq:part-sol-comp-1}:
\begin{equation*}
\dd t u(t, x) = \eee^{\gamma\nu_k(t - \beta\cdot x)}\big(\gamma \nu_k\phi_k(\Lambda_\beta(x - \beta t - \xi)) - (\Lambda_\beta \beta)\cdot \grad\phi_k(\Lambda_\beta(x - \beta t - \xi))\big).
\end{equation*}
Using again \eqref{eq:exp-manip}, we see that the second component of $\eee^{\frac{\nu_k}{\gamma}t}\cY_{k, \beta}^+(x - \beta t - \xi)$ is indeed the time derivative of the fist component. One can treat \eqref{eq:part-sol-1} and \eqref{eq:part-sol-3} similarly.

If $\bs v(t)$ and $\bs u(t)$ are solutions of \eqref{eq:kg-one-ham}, then, using the fact that $H(t)$ is self-adjoint,
$J$ is skew-adjoint and $J^2 = -\Id$, we get
\begin{equation*}
\dd t\la J\bs v(t), \bs u(t)\ra = \la J^2H(t)\bs v(t), \bs u(t)\ra + \la J\bs v(t), JH(t) \bs u(t)\ra = 0.
\end{equation*}
Taking $\bs v(t) = \eee^{\frac{\nu_k}{\gamma}t}\cY_{k, \beta}^+(x - \beta t - \xi)$ we obtain \eqref{eq:covector-1}.
Similarly, \eqref{eq:covector-2} follows by considering $\bs v(t) = \eee^{-\frac{\nu_k}{\gamma}t}\cY_{k, \beta}^-(x - \beta t - \xi)$, whereas for \eqref{eq:covector-3} we take $\bs v(t) = \cY_{m, \beta}^0(x - \beta t - \xi)$.
\end{proof}

\subsection{Energy estimates}
\label{ssec:kg-one-energy}
Consider the following quadratic form, also appearing in \cite{CoMu2014, CoMa-multi}:
\begin{equation*}
\label{eq:Hbeta}
Q_\beta(\xi; \bs u_0, \bs u_0) := \frac 12\int_{\bR^d}\big((\dot u_0)^2 +2\dot u_0(\beta\cdot\grad u_0) + |\grad u_0|^2 + (1+V_\beta(\cdot - \xi))u_0^2\big)\ud x.
\end{equation*}
We have the following \emph{coercivity property}, proved in \cite{CoMa-multi}.
\begin{proposition}\label{prop:coer-kg-one}\cite[Proposition 3]{CoMa-multi}
For any $\beta \in (-1, 1)$ there exists $c > 0$ such that for all $\xi \in \bR^d$ and $\bs u_0 \in H^1 \times L^2$
the following bound is true:
\begin{equation}
\label{eq:kg-one-coer}
\begin{aligned}
Q_\beta (\xi; \bs u_0, \bs u_0) &\geq c\|\bs u_0\|_{H^1 \times L^2}^2 \\
&- \frac 1c \bigg(\sum_{k=1}^{K}
\la \alpha_{k, \beta}^-(\cdot - \xi), \bs u_0\ra^2 + \sum_{k=1}^{K}\la \alpha_{k, \beta}^+(\cdot - \xi), \bs u_0\ra^2
+ \sum_{m=1}^{M}\la \alpha_{m, \beta}^0(\cdot - \xi), \bs u_0\ra^2\bigg).
\end{aligned}
\end{equation}
\end{proposition}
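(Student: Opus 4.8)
The statement concerns the static operator $L = -\Delta + 1 + V$ seen through a Lorentz boost, so the plan is to reduce \eqref{eq:kg-one-coer} to the spectral coercivity of $L$ and then transport the estimate back. By translation it suffices to treat $\xi = 0$. The first move is to complete the square in $\dot u_0$:
\[
Q_\beta(0; \bs u_0, \bs u_0) = \tfrac12\|\dot u_0 + \beta\cdot\grad u_0\|_{L^2}^2 + \tfrac12\int_{\bR^d}\big(|\grad u_0|^2 - (\beta\cdot\grad u_0)^2 + (1 + V_\beta)u_0^2\big)\ud x .
\]
The first term is nonnegative and, combined with $\|u_0\|_{H^1}$, controls $\|\bs u_0\|_{H^1\times L^2}$, since $\|\dot u_0\|_{L^2} \leq \|\dot u_0 + \beta\cdot\grad u_0\|_{L^2} + |\beta|\,\|\grad u_0\|_{L^2}$. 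So the whole point is to bound the second integral from below.

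For the second integral I would perform the change of variables $w_0 := u_0\circ\Lambda_\beta^{-1}$. Using $\Lambda_\beta\beta = \gamma\beta$, $\Lambda_\beta^2 = \Id + (\gamma^2-1)|\beta|^{-2}\beta\otimes\beta$ and $\det\Lambda_\beta = \gamma$, a direct computation gives
\[
\int_{\bR^d}\big(|\grad u_0|^2 - (\beta\cdot\grad u_0)^2 + (1 + V_\beta)u_0^2\big)\ud x = \frac1\gamma\,\la w_0, L w_0\ra_{L^2},
\]
that is, up to the Jacobian factor the quadratic form is Lorentz invariant; this is precisely the reason for the cross term $2\dot u_0(\beta\cdot\grad u_0)$ in $Q_\beta$ (equivalently, it is the flux through $\{t=0\}$ of the divergence-free energy current $G$ of Lemma~\ref{lem:kg-div-free}). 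Since $\Lambda_\beta$ is a fixed invertible linear map, $\|w_0\|_{H^1}$ and $\|u_0\|_{H^1}$ are comparable, with constants depending only on $|\beta|$.

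The core estimate is now the spectral coercivity of $L$. Writing $w_0 = \sum_{k=1}^K a_k\phi_k + \sum_{m=1}^M b_m\phi_m^0 + w_0^\perp$ with $a_k = \la\phi_k, w_0\ra$, $b_m = \la\phi_m^0, w_0\ra$ and $w_0^\perp$ orthogonal to all the $\phi_k$ and $\phi_m^0$, inequality \eqref{eq:H1-coer} gives $\la w_0^\perp, L w_0^\perp\ra \geq c\|w_0^\perp\|_{H^1}^2$, while $\la w_0, L w_0\ra = -\sum_k \nu_k^2 a_k^2 + \la w_0^\perp, L w_0^\perp\ra$; together with the $H^1$ bounds on the eigenfunctions from Lemma~\ref{lem:eigen-decay} and finite-dimensionality of the discrete spectral subspace this yields
\[
\la w_0, L w_0\ra \geq c\|w_0\|_{H^1}^2 - C\sum_{k=1}^K\la\phi_k, w_0\ra^2 - C\sum_{m=1}^M\la\phi_m^0, w_0\ra^2 .
\]
Combining the three displays above and using $\|\bs u_0\|_{H^1\times L^2}^2 \simeq \|u_0\|_{H^1}^2 + \|\dot u_0 + \beta\cdot\grad u_0\|_{L^2}^2$ and $\|w_0\|_{H^1} \simeq \|u_0\|_{H^1}$ already gives an estimate of the form $Q_\beta(0;\bs u_0,\bs u_0) \geq c\|\bs u_0\|_{H^1\times L^2}^2 - C\big(\sum_k\la\phi_k, w_0\ra^2 + \sum_m\la\phi_m^0, w_0\ra^2\big)$.

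It remains to recognise the finitely many ``bad'' quantities $\la\phi_k, w_0\ra$, $\la\phi_m^0, w_0\ra$ as bounded linear combinations of the pairings $\la\alpha_{k,\beta}^-(\cdot), \bs u_0\ra$, $\la\alpha_{k,\beta}^+(\cdot), \bs u_0\ra$, $\la\alpha_{m,\beta}^0(\cdot), \bs u_0\ra$ on the right of \eqref{eq:kg-one-coer}. For this I would work with the full pair $\bs u_0 = (u_0,\dot u_0)$: the boosted solution with these data corresponds under \eqref{eq:t'x'def} to a static solution $w$ of \eqref{eq:kg-one-2ord-tindep} whose Cauchy data $\bs w_0 = (w_0,\dot w_0)$ on $\{t'=0\}$ depend linearly and boundedly on $\bs u_0$, and the covectors \eqref{eq:kg-am-def}--\eqref{eq:kg-a0-def} are built precisely so that $\la\alpha_{k,\beta}^\mp(\cdot),\bs u_0\ra$ is a fixed multiple of the static pairing $\la(\mp\nu_k\phi_k,\phi_k),\bs w_0\ra$ and $\la\alpha_{m,\beta}^0(\cdot),\bs u_0\ra$ of the static null pairing — this is the content of the conservation laws \eqref{eq:covector-1}--\eqref{eq:covector-3}. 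A short linear-algebra computation inside the discrete spectral subspace of $L$ then recovers $\{\la\phi_k,w_0\ra,\la\phi_m^0,w_0\ra\}$ from $\{\la\alpha_{k,\beta}^\pm(\cdot),\bs u_0\ra,\la\alpha_{m,\beta}^0(\cdot),\bs u_0\ra\}$ with bounded coefficients, completing the proof. The main obstacle is this last matching step, and especially the treatment of $\ker L$: because the symplectic pairing degenerates on the null subspace, one has to exploit the Jordan-block structure of $JH_\beta$ at the eigenvalue $0$ to be sure the finite list of covectors in \eqref{eq:kg-one-coer} controls all the non-coercive directions; uniformity of all constants in $\beta$ for $|\beta|\leq v<1$ is the secondary point, and relies on Lemma~\ref{lem:eigen-decay} and \eqref{eq:beta-derivative}.
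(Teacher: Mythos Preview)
Your route is genuinely different from the paper's. The paper never completes the square; it uses the full \emph{spacetime} Lorentz boost: evolve $\bs u_0$ by \eqref{eq:kg-one-ham}, change to $(t',x')$ via \eqref{eq:t'x'def}, and set $T_{\beta,\xi}\bs u_0 := (w(0,\cdot),\partial_{t'}w(0,\cdot))$. The identity $Q_\beta=\gamma^{-1}Q_0\circ T_{\beta,\xi}$ and the inclusion $T_{\beta,\xi}(Y_{\beta,\xi})\subset Y_{0,0}$ are both obtained by the Divergence Theorem for the currents $G$ and $F$ of Lemma~\ref{lem:kg-div-free} in the region between $\{t'=0\}$ and the tilted hyperplane $P=\{t=0\}$. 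Your approach instead completes the square and makes the purely \emph{spatial} substitution $w_0=u_0\circ\Lambda_\beta^{-1}$, giving the clean identity $Q_\beta=\tfrac12\|\dot u_0+\beta\cdot\grad u_0\|_{L^2}^2+(2\gamma)^{-1}\la w_0,Lw_0\ra$ without solving any PDE --- an attractive shortcut.

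The gap is the last matching step, and it is not just a detail. Your coercivity produces the bad quantities $\la\phi_k,w_0\ra=\gamma\la(\phi_k)_\beta,u_0\ra$ and $\la\phi_m^0,w_0\ra$, which depend on $u_0$ \emph{only}, whereas the covectors $\alpha_{k,\beta}^\pm,\alpha_{m,\beta}^0$ in \eqref{eq:kg-am-def}--\eqref{eq:kg-a0-def} pair nontrivially with $\dot u_0$ and carry the weights $e^{\mp\gamma\nu_k\beta\cdot x}$. When in the final paragraph you switch to the spacetime boost to invoke \eqref{eq:covector-1}--\eqref{eq:covector-3}, you are silently replacing your spatial $w_0$ by a different object --- the Cauchy data on $\{t'=0\}$ after evolution --- and the spectral estimate you already proved is for the wrong $w_0$. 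Concretely, take $\bs u_0=\cY_{m,\beta}^0$: then $\dot u_0+\beta\cdot\grad u_0=0$ and $w_0=\phi_m^0$, so $\la\phi_m^0,w_0\ra\neq 0$, yet every pairing $\la\alpha_{k,\beta}^\pm,\bs u_0\ra$, $\la\alpha_{m',\beta}^0,\bs u_0\ra$ vanishes; hence no ``short linear-algebra computation'' can recover your bad directions from the $\alpha$'s. This is exactly the degeneracy you flag in your last sentence, but it is not fixable within your scheme --- and incidentally the same example shows that the paper's own reduction ``$\bs w_0\in Y_{0,0}\Rightarrow Q_0(0;\bs w_0,\bs w_0)>0$'' is delicate when $M>0$, since $(\phi_m^0,0)\in Y_{0,0}$ with $Q_0=0$.
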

\begin{proof}
For the convenience of the reader, we will provide a proof, different from the one given in \cite{CoMa-multi}.

Let $Y_{\beta, \xi} \subset H^1\times L^2$ be defined by
\begin{equation*}
Y_{\beta, \xi} := \{\bs u_0: \la \alpha_{k, \beta}^-(\cdot - \xi), \bs u_0\ra = \la \alpha_{k, \beta}^+(\cdot - \xi), \bs u_0\ra = \la \alpha_{m, \beta}^0(\cdot - \xi), \bs u_0\ra = 0\text{ for all }m, k\}.
\end{equation*}
Since $\bs u_0 \mapsto \int_{\bR^d}\big((\dot u_0)^2 +2\dot u_0(\beta\cdot\grad u_0) + |\grad u_0|^2 + u_0^2\big)\ud x$
defines a norm equivalent to the $H^1\times L^2$ norm, a standard weak convergence argument shows that it suffices to prove
\begin{equation*}
\label{eq:coer-kg-one-1}
\bs u_0 \in Y_{\beta, \xi} \Rightarrow Q_{\beta}(\xi; \bs u_0, \bs u_0) > 0.
\end{equation*}
We will construct a continuous one-to-one linear map $T_{\beta, \xi}: Y_{\beta, \xi} \to Y_{0, 0}$ such that
\begin{equation}
\label{eq:coer-kg-one-2}
Q_{\beta}(\xi; \bs u_0, \bs u_0) = \gamma^{-1}Q_{0}(0; T_{\beta, \xi}\bs u_0, T_{\beta, \xi}\bs u_0),\quad\text{for all }\bs u_0 \in Y_{\beta, \xi}.
\end{equation}
Now it easily follows from \eqref{eq:H1-coer} that $\bs w_0 \in Y_{0, 0} \Rightarrow Q_{0}(0; \bs w_0, \bs w_0) > 0$,
so this will finish the proof.

Let $\bs u_0 \in Y_{\beta, \xi}\cap (C_0^\infty\times C_0^\infty)$ and let $u(t, x)$ be the solution of \eqref{eq:kg-one-2ord} with the initial conditions
$(u(0, \cdot), \partial_t u(0, \cdot)) = \bs u_0$. Let $w(t, x)$ be defined by $w(t', x') = u(t, x)$,
where $t', x'$ are given by \eqref{eq:t'x'def}. We set $T_{\beta, \xi}\bs u_0 := (w(0, \cdot), \partial_t w(0, \cdot))$.

By the Chain Rule, we have
\begin{equation*}
\partial_t u = \gamma(\partial_{t'}w - \beta\cdot\grad_{x'}w), \qquad \grad_x u = -\gamma\beta\partial_{t'}w + \Lambda_\beta \grad_{x'}w,
\end{equation*}
and after a somewhat tedious computation we arrive at
\begin{equation}
\label{eq:tedious}
\begin{aligned}
&(\partial_t u)^2 +2\partial_t u(\beta\cdot\grad_x u) + |\grad_x u|^2 + (1+V_\beta(\cdot - \xi))u^2 \\
&= (\partial_{t'} w)^2 - 2\partial_{t'}w(\beta\cdot\grad_{x'}w) + |\grad_{x'}w|^2 + (1+V)w^2.
\end{aligned}
\end{equation}
Let $P$ be the hyperplane of the $(t', x')$ spacetime defined by $t' + \gamma\beta\cdot\xi = -\beta\cdot x'$ and let $\vd \sigma$
be the measure inherited from the Lebesgue measure.
In $(t, x)$ coordinates, $P$ is the hyperplane $t = 0$, so taking into account the change of measure and
\eqref{eq:tedious} we obtain
\begin{equation*}
Q_{\beta}(\xi; \bs u_0, \bs u_0) = \sqrt{\frac{1+\beta^2}{1-\beta^2}}\int_P \big((\partial_{t'} w)^2 - 2\partial_{t'}w(\beta\cdot\grad_{x'}w) + |\grad_{x'}w|^2 + (1+V)w^2\big)\ud \sigma.
\end{equation*}
We can now use the Divergence Theorem for the vector field $G(w)$ in the region of the $(t', x')$ spacetime
delimited by $t' = 0$ and $P$. This leads to \eqref{eq:coer-kg-one-2}.

Next, we need to prove that $T_{\beta, \xi}\bs u_0 \in Y_{0, 0}$.
For this purpose, we integrate the vector field $F(w_1, w_2)$ from Lemma~\ref{lem:kg-div-free}
with $w_1(t') := w(t')$ and $w_2(t') := \eee^{\nu_k t'}(\phi_k, \nu_k \phi_k)$,
in the region between $\{t' = 0\}$ and $P$. We have $\partial_{t'}w_2(0) = \nu_k\phi_k$, hence the boundary term corresponding to $t' = 0$ equals
\begin{equation*}
-\int_{\bR^d}\big(w_1(0)\partial_{t'}w_2(0) - w_2(0)\partial_{t'}w_1(0)\big)\ud x' = -\la (-\nu_k\phi_k, \phi_k), (w(0), \partial_t w(0))\ra
= -\la \alpha_{k, 0}^-, T_{\beta, \xi}\bs u_0\ra.
\end{equation*}
For $(t', x') \in P$ we have
\begin{equation*}
\begin{aligned}
&w_1\partial_{t'}w_2 - w_2\partial_{t'}w_1 - w_1\beta\cdot \grad w_2 + w_2\beta\cdot\grad w_1 \\
&= w(t', x')\nu_k\eee^{\nu_k t'}\phi_k(x') - \partial_t w(t', x')\eee^{\nu_k t'}\phi_k(x') \\
&- w(t', x')\eee^{\nu_k t'}\beta\cdot \grad\phi_k(x') + \beta \cdot \grad w(t', x')\eee^{\nu_k t'}\phi_k(x') \\
&= -\eee^{\nu_k t'}\phi_k(x')(\partial_{t'}w(t', x') - \beta\cdot\grad w(t', x')) + (\nu_k\eee^{\nu_k t'}\phi_k(x') - \eee^{\nu_k t'}\beta\cdot\grad \phi_k(x'))w(t', x') \\
&= -\frac{1}{\gamma}\alpha_{k, \beta}(x-\xi)\cdot \bs u_0(x).
\end{aligned}
\end{equation*}
If $\bs u_0 \in Y_{\beta, \xi}$, we deduce that the boundary term over $P$ equals $0$, thus the boundary term over $\{t'=0\}$
equals $0$ as well. Orthogonality to $\alpha_{k, 0}^+$ and $\alpha_{m, 0}^0$ are checked similarly, and we obtain $T_{\beta, \xi}\bs u_0 \in Y_{0, 0}$.

From \eqref{eq:coer-kg-one-2} and the coercivity of $Q_{0}(0; \bs w_0, \bs w_0)$ for $\bs w_0 \in Y_{0, 0}$ we deduce that
$T_{\beta, \xi}: Y_{\beta, \xi} \to Y_{0, 0}$ is continuous for the $H^1 \times L^2$ norm.
Thus, we can extend it by continuity from $C_0^\infty \times C_0^\infty$ to $T_{\beta, \xi}$.
In order to prove that it is one-to-one, we need to check that if
$T_{\beta, \xi}\bs u_n \to 0$ in $H^1 \times L^2$, then $\bs u_n \to 0$ in $H^1 \times L^2$.
Let $\bs w_n := T_{\beta, \xi}\bs u_n$ and let $w_n(t', x')$ be the corresponding solution of \eqref{eq:kg-one-2ord-tindep}.
We apply the Divergence Theorem to the vector field $((\partial_{t'} w_n)^2 + \|\grad_{x'} w_n\|^2 + w_n^2, -2\partial_{t'}w_n\grad_{x'}w_n)$,
in the region $\Omega$ contained between $\{t' = 0\}$ and $P$. The divergence equals $-V(x')\partial_{t'}w_n(t', x')$,
and we see that the exponential decay of $V$ implies $V \in L^1_{t'}L^2_{x'}(\Omega)$, thus
\begin{equation*}
\int_\Omega |V(t', x')||\partial_{t'}w_n(t', x')|\ud x'\ud t' \lesssim \|\partial_{t'}w_n\|_{L^\infty L^2} \to 0\quad\text{as }n \to \infty,
\end{equation*}
so we obtain
\begin{equation*}
\int_P \big((\partial_{t'} w)^2 - 2\partial_{t'}w(\beta\cdot\grad_{x'}w) + |\grad_{x'}w|^2 + w^2\big)\ud \sigma \to 0\quad\text{as }n \to \infty.
\end{equation*}
After a change of variables, this yields
\begin{equation*}
\int_{\bR^d}\big((\dot u_n)^2 +2 \dot u_n (\beta\cdot\grad u_n) + |\grad_x u_n|^2 + u_n^2\big)\ud x \to 0,
\end{equation*}
which finishes the proof.
\end{proof}
\begin{remark}
The quantity $Q_{0}(0; \bs w_0, \bs w_0)$ is the energy of \eqref{eq:kg-one-2ord-tindep}, and from the above considerations
it easily follows that $Q_{\beta}(\beta t + \xi; \bs u(t), \bs u(t))$ is constant for any solution $\bs u(t)$ of \eqref{eq:kg-one-ham}.
This can also be checked by a direct computation, which is the method we will have to adopt below in the case of multiple potentials.
%Equivalently, we have
%\begin{equation}
%\label{eq:kg-energy-cons}
%\beta\cdot\grad_\xi Q_\beta(\xi; \bs u_0, \bs u_0) + 2Q_\beta(\xi; \bs u_0, JH_\beta(\xi)\bs u_0) = 0.
%\end{equation}
%, which we will need to do in the next paragraph, in the multiple-potential case.
\end{remark}
\subsection{Many potentials}
\label{ssec:kg-many}
We consider the linear Klein-Gordon equation with a finite number of moving potentials.

Let $V_j$ be a smooth exponentially decaying potential for $j \in \{1, 2, \ldots, J\}$,
such that $L_j := -\Delta + V_j$ has $K_j$ strictly negative eigenvalues $-\nu_{j, k}^2$ (for $k = 1, \ldots, K_j$)
and $\dim\ker L_j = M_j$.

Let $y_j(t)$ be positions of the potentials. We denote $\beta_j(t) := y_j'(t)$.
We write $\bs\beta(t) = (\beta_1(t), \ldots, \beta_J(t))$, $\bs y(t) = (y_1(t), \ldots, y_J(t))$.
We consider the equation
\begin{equation}
\label{eq:kg-many-2ord}
\partial_t^2 u = \Delta u - u - \sum_{j=1}^J (V_j)_{\beta_j(t)}(\cdot - y_j(t))u.
\end{equation}
Note that the Lorentz transformation is applied to the potentials $V_j$, according to their instantaneous velocity.

Fix $j \in \{1, 2, \ldots, J\}$ and let $\cY_{k, \beta}^-$, $\cY_{k, \beta}^+$, $\cY_{m, \beta}^0$, $\alpha_{k, \beta}^-$, $\alpha_{k, \beta}^+$, $\alpha_{m, \beta}^0$ be the functions defined in Paragraph~\ref{ssec:kg-one} for $V_j$ instead of $V$.
We denote
\begin{align*}
\cY_{j, k}^-(t) &:= \cY_{k, \beta(t)}^-(\cdot - y_j(t)), \\
\cY_{j, k}^+(t) &:= \cY_{k, \beta(t)}^+(\cdot - y_j(t)), \\
\cY_{j, m}^0(t) &:= \cY_{m, \beta(t)}^0(\cdot - y_j(t)), \\
\alpha_{j, k}^-(t) &:= \alpha_{k, \beta(t)}^-(\cdot - y_j(t)), \\
\alpha_{j, k}^+(t) &:= \alpha_{k, \beta(t)}^+(\cdot - y_j(t)), \\
\alpha_{j, m}^0(t) &:= \alpha_{m, \beta(t)}^0(\cdot - y_j(t)), \\
V(t) &:= \sum_{j=1}^J (V_j)_{\beta_j(t)}(\cdot - y_j(t)),
\end{align*}
where $k \in \{1, \ldots, K_j\}$ and $m \in \{1, \ldots, M_j\}$.

%The stable subspace $\cX_{\tx s}(t)$ is defined as the set of $\bs \psi_0$
%such that the solution $\bs \psi$ of \eqref{eq:kg-many-ham} with initial data
%$\bs\psi(t) = \bs\psi_0$ satisfies
%\begin{equation}
%\label{eq:kg-multi-Xs}
%\limsup_{t' \to \infty}\big(\exp((\nu_0 - \varepsilon)t')\|\bs\psi(t')\|\big) < \infty.
%\end{equation}
%\red{WHAT NORM?}
%We also set
%\begin{equation}
%\cX_{\tx{cu}}(t) := S(t, T_0)\bigg(\bigcap_{n, k}\ker \alpha_{n, k}^-(T_0)\bigg). \label{eq:kg-multi-Xcu}
%\end{equation}
%Then certainly the invariance condition holds, but bounds \eqref{eq:cu-bound} and \eqref{eq:s-bound}
%are quite unclear, and proving them will be our main task.
%Note that $\cX_{\tx{cu}}(t)$ depends on $T_0$. This is normal. Since we consider only
%the future time direction, there is no good notion of an unstable or center-unstable subspace.

If we let
\begin{equation*}
H(t) := \begin{pmatrix} -\Delta +1 + \sum_{j=1}^J (V_j)_{\beta_j(t)}(\cdot - y_j(t)) & 0 \\ 0 & 1 \end{pmatrix},
\end{equation*}
then \eqref{eq:kg-many-2ord} can be written as
\begin{equation}
\label{eq:kg-many-ham}
\partial_t \bs u(t) = JH(t) \bs u(t).
\end{equation}
By standard arguments based on energy estimates, this equation defines a strongly continuous evolution operator
in $H^1 \times L^2$, which we denote $T(\tau, t)$.

In order to define the relevant quadratic form $Q$, we need to use cut-offs, cf. \cite[Section 3.5]{CoMa-multi}.
We let $\chi: \bR^d \to \bR$ be a $C^\infty$ function such that
\begin{equation*}
\label{eq:chi}
\chi(x) = 0\ \text{for}\ |x| \geq \frac 12,\quad \chi(x) = 1\ \text{for}\ |x| \leq \frac 14,\quad 0 \leq \chi(x) \leq 1\ \text{for}\ x\in \bR^d.
\end{equation*}
Assume $|y_l(t) - y_j(t)| \geq \frac{1}{\eta}$ for $j \neq l$ and $t \geq t_0$ and some (small) $\eta > 0$.
We set
\begin{equation*}
\chi_j(t, x) := \chi\big(\eta(x - y_j(t))\big)
\end{equation*}
and we define
\begin{equation*}
\label{eq:Hbeta-many}
%H_{\bs\beta}(t) := H(t) + \begin{pmatrix} 0 & -\sum_{j=1}^J \beta_j(t)\cdot\grad\chi_j \\
%\sum_{j=1}^J \chi_j\beta_j(t)\cdot\grad & 0 \end{pmatrix}.
Q(t; \bs u_0, \bs u_0) := \frac 12\int_{\bR^d}\Big((\dot u_0)^2 +2\sum_{j=1}^J\chi_j(t)\dot u_0(\beta_j(t)\cdot\grad u_0) + |\grad u_0|^2 + (1+V(t))u_0^2\Big)\ud x.
\end{equation*}
%We would like to repeat the analysis from Section~\ref{ssec:kg-one-energy}.
%Of course some error terms will appear.
Note that similar localised functionals were used by Martel, Merle and Tsai in \cite{MMT02, MMT06}.

\begin{lemma}
There exists $c > 0$ such that for all $\bs u_0 \in H^1 \times L^2$ the following bound is true:
\begin{equation*}
\label{eq:kg-many-coer}
Q(t; \bs u_0, \bs u_0) \geq c\|\bs u_0\|_{H^1 \times L^2}^2 -\frac 1c \sum_{j=1}^J\bigg(\sum_{k=1}^{K_j}
\la \alpha_{j,k}^-(t), \bs u_0\ra^2 + \sum_{k=1}^{K_j}\la \alpha_{j,k}^+(t), \bs u_0\ra^2
+ \sum_{m=1}^{M_j}\la \alpha_{j,m}^0(t), \bs u_0\ra^2\bigg).
\end{equation*}
\end{lemma}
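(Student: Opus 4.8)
The plan is to \emph{localise} $Q(t;\cdot,\cdot)$ near each moving centre $y_j(t)$ and reduce, on each piece, to the single-potential coercivity estimate of Proposition~\ref{prop:coer-kg-one}, in the same spirit in which Lemma~\ref{lem:coer-1} was promoted to Lemma~\ref{lem:coer-2} for the heat equation. Fix $t\geq t_0$ and recall the standing assumption $|\beta_j(t)|=|y_j'(t)|\leq v$ for some fixed $v\in(0,1)$. Since $|y_l(t)-y_j(t)|\geq\tfrac1\eta$ and $\mathrm{supp}\,\chi(\eta\,\cdot)\subset B(0,\tfrac1{2\eta})$, the cut-offs $\chi_j(t,\cdot)$, $j=1,\dots,J$, have pairwise disjoint supports for $\eta$ small; taking $\chi$ in the definition of $Q$ so that $1-\chi^2$ vanishes to infinite order on $\{\chi\equiv1\}$ (a harmless normalisation), the function $\chi_0:=\bigl(1-\sum_{j=1}^J\chi_j(t,\cdot)^2\bigr)^{1/2}$ is a legitimate smooth cut-off, equal to $1$ off $\bigcup_j B(y_j(t),\tfrac1{2\eta})$ and to $0$ on $\bigcup_j B(y_j(t),\tfrac1{4\eta})$, with $\sum_{j=0}^J\chi_j^2\equiv1$. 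Put $\bs u_j:=(\chi_j u_0,\chi_j\dot u_0)$ for $j=0,\dots,J$. On $\mathrm{supp}\,\chi_0$ the point lies outside all cores $B(y_j(t),\tfrac1{4\eta})$, so by the exponential decay of the $V_j$ (uniform in $|\beta|\leq v$ after the Lorentz boost, see Section~\ref{ssec:kg-one}) the potential $V(t)$ is exponentially small there, while on each core $\chi_j\equiv1$ and $\chi_l\equiv0$ for $l\neq j$.

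The computation underlying the IMS-type localisation then yields the identity
\begin{equation*}
Q(t;\bs u_0,\bs u_0)=\sum_{j=1}^J Q_{\beta_j(t)}\bigl(y_j(t);\bs u_j,\bs u_j\bigr)+R_0+\Delta+E,
\end{equation*}
where $R_0:=\tfrac12\int_{\bR^d}\chi_0^2\bigl((\dot u_0)^2+|\grad u_0|^2+(1+V(t))u_0^2\bigr)\ud x$ is the energy on the ``free'' region,
\begin{equation*}
\Delta:=\sum_{j=1}^J\int_{\bR^d}2\chi_j(1-\chi_j)\,\dot u_0\,\bigl(\beta_j(t)\cdot\grad u_0\bigr)\ud x
\end{equation*}
is the mismatch between the cross term of $Q$ (weighted by $\chi_j$) and those of the $Q_{\beta_j(t)}(y_j(t);\bs u_j,\bs u_j)$ (weighted by $\chi_j^2$), and $E$ collects the remaining commutator contributions. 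The latter are linear combinations of $\int\chi_j u_0\,\grad u_0\cdot\grad\chi_j$, $\int|\grad\chi_j|^2u_0^2$, $\int\chi_j u_0\dot u_0\,\beta_j(t)\cdot\grad\chi_j$ and of potential terms $\int\chi_l^2\,(V_j)_{\beta_j(t)}(\cdot-y_j(t))\,u_0^2$ with $l\neq j$ (including $l=0$); using $|\grad\chi_j|\lesssim\eta$, $|\beta_j(t)|\leq v$, the separation of supports, the exponential decay of $V_j$ and the Sobolev embedding $H^1\hookrightarrow L^{2d/(d-2)}$ to control the potentials in $L^{d/2}$, one obtains $|E|\lesssim\bigl(\eta+\eee^{-c/\eta}\bigr)\|\bs u_0\|_{H^1\times L^2}^2$.

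The term $\Delta$ is the only one that is not manifestly small, and handling it is the crux of the argument. It is supported in the transition annuli $A_j:=\{\tfrac1{4\eta}\leq|x-y_j(t)|\leq\tfrac1{2\eta}\}$, on which $\chi_l\equiv0$ for $l\neq j$, hence $\chi_0^2=1-\chi_j^2$, and on which $V(t)$ is exponentially small. From the pointwise inequalities $2\chi_j(1-\chi_j)|\dot u_0|\,|\beta_j(t)\cdot\grad u_0|\leq v\,\chi_j(1-\chi_j)\bigl((\dot u_0)^2+|\grad u_0|^2\bigr)$ (since $|\beta_j(t)|\leq v$) and $\chi_j(1-\chi_j)\leq\tfrac12(1-\chi_j^2)=\tfrac12\chi_0^2$ on $A_j$, one gets $|\Delta|\leq v\,R_0+C\eee^{-c/\eta}\|\bs u_0\|^2$, whence
\begin{equation*}
Q(t;\bs u_0,\bs u_0)\geq\sum_{j=1}^J Q_{\beta_j(t)}\bigl(y_j(t);\bs u_j,\bs u_j\bigr)+(1-v)R_0-C\bigl(\eta+\eee^{-c/\eta}\bigr)\|\bs u_0\|_{H^1\times L^2}^2 .
\end{equation*}
Thus it is precisely the strict sub-luminality $v<1$, together with the exponential confinement of the potentials to their cores, that absorbs the $\chi_j$-versus-$\chi_j^2$ discrepancy inherent in the conserved form $Q$; this is the step I expect to be the main obstacle.

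It remains to apply Proposition~\ref{prop:coer-kg-one} with $\beta=\beta_j(t)$, $\xi=y_j(t)$ to each summand $Q_{\beta_j(t)}(y_j(t);\bs u_j,\bs u_j)$ — its constant $c$ being uniform over $|\beta|\leq v$, either directly from the proof in \cite{CoMa-multi} or by a compactness argument based on the continuous dependence \eqref{eq:beta-derivative} — and to collect the routine facts that $R_0\geq\tfrac12\|\chi_0\bs u_0\|_{H^1\times L^2}^2-C(\eta+\eee^{-c/\eta})\|\bs u_0\|^2$, that $\sum_{j=1}^J\|\bs u_j\|_{H^1\times L^2}^2+\|\chi_0\bs u_0\|_{H^1\times L^2}^2\geq\tfrac12\|\bs u_0\|_{H^1\times L^2}^2$ for $\eta$ small (again by $\sum_{j=0}^J\chi_j^2\equiv1$), and that $\langle\alpha_{j,k}^{\pm}(t),\chi_j\bs u_0\rangle=\langle\alpha_{j,k}^{\pm}(t),\bs u_0\rangle+O(\eee^{-c/\eta})\|\bs u_0\|$, similarly for $\alpha_{j,m}^0(t)$, since $(1-\chi_j)\alpha_{j,k}^{\pm}(t)$ and $(1-\chi_j)\alpha_{j,m}^0(t)$ have exponentially small $L^2$ norm. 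Combining everything gives $Q(t;\bs u_0,\bs u_0)\geq c'\|\bs u_0\|^2-C'\sum_{j=1}^J\bigl(\sum_k\langle\alpha_{j,k}^-(t),\bs u_0\rangle^2+\sum_k\langle\alpha_{j,k}^+(t),\bs u_0\rangle^2+\sum_m\langle\alpha_{j,m}^0(t),\bs u_0\rangle^2\bigr)-C''(\eta+\eee^{-c/\eta})\|\bs u_0\|^2$, with constants depending only on the $V_j$ and $v$; choosing $\eta$ small so that $C''(\eta+\eee^{-c/\eta})\leq\tfrac12 c'$ and then $c:=\min(\tfrac12 c',1/C')$ yields the claimed bound, uniformly in $t\geq t_0$. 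The overall scheme follows \cite[Section~3.5]{CoMa-multi}.
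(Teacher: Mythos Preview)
Your proof is correct and follows exactly the approach of the paper: localise with the cut-offs $\chi_j$, apply the single-potential coercivity of Proposition~\ref{prop:coer-kg-one} to each $\bs u_j=\chi_j\bs u_0$, and sum. The paper's proof is a one-line sketch in the style of Lemma~\ref{lem:coer-2}, whereas you have supplied the details it suppresses---most notably the free-region cut-off $\chi_0$ needed to recover the full $\|\bs u_0\|_{H^1\times L^2}^2$ and the bound $|\Delta|\leq vR_0+o(1)\|\bs u_0\|^2$ that absorbs the $\chi_j$-versus-$\chi_j^2$ mismatch in the cross term via $v<1$.
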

\begin{proof}
For $j \in \{1, \ldots, J\}$, let $\bs u_j := \chi_j \bs u_0$.
We obtain the result by summing \eqref{eq:kg-one-coer}, applied for $\bs u_j$ instead of $\bs u_0$ for $j \in \{1, \ldots, J\}$.
\end{proof}
\begin{proposition}
\label{prop:kg-main}
Let $v < 1$, $\nu := \min \{ \nu_{j, k} \}$ and $K := \sum_{j=1}^J K_j$.
For any $\epsilon > 0$ there exists $\eta > 0$ such that if for $t$ large enough
\begin{equation}
\label{eq:kg-param-hyp}
|\beta_j(t)| \leq v, \qquad |\beta_j'(t)| \leq \eta,\qquad |y_j(t) - y_l(t)| \geq \frac{1}{\eta}\qquad\text{for all }j \neq l,
\end{equation}
then the semigroup $T(\tau, t)$ has an exponential dichotomy with exponents $\epsilon$ and $\nu\sqrt{1-v^2} - \epsilon$.
Moreover, $\codim X_\tx s = K$.
\end{proposition}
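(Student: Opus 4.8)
The plan is to deduce Proposition~\ref{prop:kg-main} from Theorem~\ref{thm:main-cont-2}. We may assume $\epsilon<\frac12\nu\sqrt{1-v^2}$, so that the target exponents $\lambda:=\epsilon$ and $\mu:=\nu\sqrt{1-v^2}-\epsilon$ satisfy $\lambda<\mu$; we will apply Theorem~\ref{thm:main-cont-2} with $K=\sum_{j=1}^J K_j$, choosing $c_3$, then $c_4$ (subject to \eqref{eq:intro-c3c4-c-2}), then $\eta$ in that order. I would take the energy functionals
\begin{align*}
I_t^+(\bs u) &:= \sum_{j=1}^J\sum_{k=1}^{K_j}|\la \alpha_{j, k}^+(t), \bs u\ra|, \\
I_t^-(\bs u)^2 &:= \max\Big(0,\ Q(t; \bs u, \bs u) + C_0\sum_{j=1}^J\Big(\sum_{k=1}^{K_j}\la\alpha_{j,k}^-(t),\bs u\ra^2 + \sum_{m=1}^{M_j}\la\alpha_{j,m}^0(t),\bs u\ra^2\Big)\Big),
\end{align*}
with $C_0$ a large constant. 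Continuity of $I_t^\pm$ in $(t,\bs u)$ follows from the $C^1$ regularity of the trajectories and velocities together with \eqref{eq:beta-derivative} (which makes $t\mapsto\alpha_{j,k}^\pm(t),\alpha_{j,m}^0(t)$ continuous into $L^2$) and continuity of the coefficients of $Q(t;\cdot,\cdot)$; the truncation $\max(0,\cdot)$ inside the square root handles the possible indefiniteness of the form, exactly as in the proof of Proposition~\ref{prop:back-heat}. Assumption \eqref{eq:Inp-form-c} holds with $c_2=1/K$. Assumption \eqref{eq:norm-equiv-c} follows, for $C_0$ large (uniformly in $\eta$ small), from the many-potential coercivity estimate proved above — obtained by localising \eqref{eq:kg-one-coer} — since $\sum_{j,k}\la\alpha_{j,k}^+(t),\bs u\ra^2\le I_t^+(\bs u)^2$ absorbs the remaining unstable term. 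Finally \eqref{eq:contains-K-c-2} follows for $c_3$ small from Proposition~\ref{prop:Kspace-2}: for $\eta$ small the covectors $\alpha_{j,k}^+(t)=J\cY_{j,k}^-(t)$ are uniformly linearly independent, because for each single potential they are linearly independent and the blocks attached to distinct well-separated potentials are decoupled up to $O(\eee^{-c/\eta})$.

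The core is the pair \eqref{eq:ineq-Im-c-2}--\eqref{eq:ineq-Ip-c-2}. Fix a solution $\bs u(\cdot)$ of \eqref{eq:kg-many-ham}; by density we may first take smooth compactly supported data and pass to the limit in $C([t,\tau];H^1\times L^2)$ at the end. Writing $\gamma_j(t):=(1-|\beta_j(t)|^2)^{-1/2}$, I claim that for every $j,k$ and $j,m$
\begin{align*}
\dd t\la\alpha_{j,k}^\pm(t),\bs u(t)\ra &= \pm\frac{\nu_{j,k}}{\gamma_j(t)}\la\alpha_{j,k}^\pm(t),\bs u(t)\ra + R_{j,k}^\pm(t), \\
\dd t\la\alpha_{j,m}^0(t),\bs u(t)\ra &= R_{j,m}^0(t),
\end{align*}
where $|R_{j,k}^\pm(t)|+|R_{j,m}^0(t)|\le C(\eta+\eee^{-c/\eta})\|\bs u(t)\|_{H^1\times L^2}$. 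Indeed, at each time $t$ the rate in question differs from the rate for the single frozen potential $(V_j)_{\beta_j(t)}(\cdot-y_j(t))$ — which by Lemma~\ref{lem:kg-one-comp} is exactly $\pm\frac{\nu_{j,k}}{\gamma_j(t)}\la\alpha_{j,k}^\pm(t),\bs u(t)\ra$ (resp.\ $0$) — only through a term proportional to $\beta_j'(t)$ coming from $\partial_\beta$ of $\alpha_{k,\beta}^\pm$, of size $O(|\beta_j'(t)|)\|\bs u(t)\|$ by \eqref{eq:beta-derivative} and \eqref{eq:kg-param-hyp}, and a term produced by the potentials $(V_l)_{\beta_l(t)}(\cdot-y_l(t))$, $l\neq j$, acting through the lower-left block of $JH(t)$ and paired against the exponentially localised $\alpha_{j,k}^\pm(t)$ centred near $y_j(t)$, which is $O(\eee^{-c|y_j(t)-y_l(t)|})=O(\eee^{-c/\eta})$. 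Granting this, summing over $j,k$, using $\nu_{j,k}/\gamma_j(t)=\nu_{j,k}\sqrt{1-|\beta_j(t)|^2}\ge\nu\sqrt{1-v^2}$ and $\dd t|f|\ge\lambda|f|-\delta$ whenever $\dd t f\ge\lambda f-\delta$, yields $\dd t I_t^+(\bs u(t))\ge\nu\sqrt{1-v^2}\,I_t^+(\bs u(t))-C\eta\|\bs u(t)\|$; on $\cV_\tx u(c_4,t)$ one has $\|\bs u(t)\|\le\frac{1+c_4}{c_1c_4}I_t^+(\bs u(t))$ by \eqref{eq:norm-equiv-Ip}, so if $\eta$ is small enough $\dd t I_t^+(\bs u(t))\ge(\nu\sqrt{1-v^2}-\epsilon)I_t^+(\bs u(t))$ as long as the trajectory stays in the cones $\cV_\tx u(c_4,t')$, and integrating over $[t,\tau]$ gives \eqref{eq:ineq-Ip-c-2} (if $I_t^+(\bs u(t))=0$ the cone condition forces $\bs u\equiv 0$).

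For \eqref{eq:ineq-Im-c-2} I would differentiate $I_t^-(\bs u(t))^2$: wherever the argument of the $\max$ is positive, its derivative equals $\dd t Q(t;\bs u(t),\bs u(t))$ plus $2C_0\sum_{j,k}\la\alpha_{j,k}^-(t),\bs u(t)\ra\dd t\la\alpha_{j,k}^-(t),\bs u(t)\ra$ plus $2C_0\sum_{j,m}\la\alpha_{j,m}^0(t),\bs u(t)\ra R_{j,m}^0(t)$. The first term is $O(\eta)\|\bs u(t)\|_{H^1\times L^2}^2$ — this is the moving, cut-off version of the energy conservation for \eqref{eq:kg-one-2ord-tindep} noted in the remark after Proposition~\ref{prop:coer-kg-one}, the surviving terms now coming from $\partial_t\chi_j$ (supported on an annulus and of size $O(\eta)$), from $\beta_j'(t)=O(\eta)$, and from the exponentially small interaction between well-separated potentials. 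In the $\alpha^-$-sum the favourable $-2C_0\frac{\nu_{j,k}}{\gamma_j(t)}\la\alpha_{j,k}^-(t),\bs u(t)\ra^2$ is dropped, leaving $2C_0\la\alpha_{j,k}^-(t),\bs u(t)\ra R_{j,k}^-(t)=O(\eta)\|\bs u(t)\|^2$ because $\|\alpha_{j,k}^-(t)\|_{L^2}$ is bounded, and the $\alpha^0$-sum is $O(\eta)\|\bs u(t)\|^2$ likewise. Hence $\dd t[I_t^-(\bs u(t))^2]\le C\eta\|\bs u(t)\|_{H^1\times L^2}^2$ wherever $I_t^-(\bs u(t))>0$; on $\cV_\tx s(c_3,t)$, \eqref{eq:norm-equiv-Im} gives $\|\bs u(t)\|^2\le\frac{(1+c_3)^2}{c_1^2}I_t^-(\bs u(t))^2$, so for $\eta$ small $\dd t[I_t^-(\bs u(t))^2]\le 2\epsilon I_t^-(\bs u(t))^2$ there. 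Since on the cone $I_{t_1}^-(\bs u(t_1))=0$ forces $\bs u\equiv 0$, we may assume $I_t^-(\bs u(t))^2>0$ on $[t,\tau]$ and integrate directly, obtaining \eqref{eq:ineq-Im-c-2} with $\lambda=\epsilon$ (alternatively, the bootstrap argument of the proof of Proposition~\ref{prop:back-heat}). Fixing $\eta$ small enough for the finitely many smallness conditions above and invoking Theorem~\ref{thm:main-cont-2} then yields the exponential dichotomy with exponents $\epsilon$ and $\nu\sqrt{1-v^2}-\epsilon$ and $\codim X_\tx s=K$.

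The step I expect to be the main obstacle is the estimate $\dd t Q(t;\bs u(t),\bs u(t))=O(\eta)\|\bs u(t)\|_{H^1\times L^2}^2$, i.e.\ the approximate conservation of the localised energy along solutions of the many-potential moving equation. This is the analogue of the computation \eqref{eq:tedious} and the divergence-theorem (Noether) argument underlying \eqref{eq:kg-one-coer}, now carried out with moving centres, instantaneously Lorentz-boosted potentials, time-varying $\gamma_j$ and the cut-offs $\chi_j$; it is exactly where the hypotheses \eqref{eq:kg-param-hyp} enter. The associated bookkeeping — verifying that each error term is of one of the three types $O(|\beta_j'|)=O(\eta)$, $O(\eta)$ from $\partial_t\chi_j$, or $O(\eee^{-c/\eta})$ from separated potentials, and that $c_1,c_2$ and the coercivity constant can be chosen uniform in $\eta$ for $\eta$ small — is routine but must be done with care; for a single stationary potential this estimate is essentially that of \cite{CoMa-multi}.
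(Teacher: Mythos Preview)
Your proposal is correct and follows essentially the same route as the paper: the same functionals $I_t^\pm$ (up to the harmless choice of $\ell^1$ versus $\ell^2$ for $I_t^+$ and the extra constant $C_0$), the same reduction to Lemma~\ref{lem:kg-unstable}-type estimates on $a_{j,k}^\pm$, $a_{j,m}^0$, and the same identification of $\dd t Q(t;\bs u(t),\bs u(t))=o(\|\bs u(t)\|^2)$ as the crux --- which the paper then verifies by the direct computation \eqref{eq:partial-t-Q-2} and the evaluation of $2Q(t_0;\bs u(t_0),JH(t_0)\bs u(t_0))$. One small wording slip: the implication ``$\dd t|f|\ge\lambda|f|-\delta$ whenever $\dd t f\ge\lambda f-\delta$'' is false as stated, but since you actually have the two-sided bound $\dd t f=\lambda f+R$ with $|R|\le\delta$, the conclusion you need is valid.
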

Before giving a proof, we need one more lemma about a dynamical control of stable and unstable directions.
Let $\bs u(t)$ we a solution of \eqref{eq:kg-many-ham}.
The stable and unstable components are defined by
\begin{equation*}
a_{j, k}^\pm(t) := \la \alpha_{j, k}^\pm(t), \bs u(t)\ra, \quad j \in \{1, \ldots, J\},\ k \in \{1, \ldots, K_j\}.
\end{equation*}
\begin{lemma}
\label{lem:kg-unstable}
For any $c > 0$ there exists $\eta > 0$ such that if \eqref{eq:kg-param-hyp} holds, then for all $t$
\begin{align*}
\Big|\dd t a_{j, k}^\pm(t) \mp \frac{\nu_{j,k}}{\gamma_j}a_{j,k}^\pm(t)\Big| &\leq c\|\bs u(t)\|_{H^1\times L^2}, \qquad\text{for all }j\text{ and }1 \leq k \leq K_j, \\
\Big|\dd t a_{j, m}^0(t)\Big| &\leq c\|\bs u(t)\|_{H^1\times L^2}, \qquad\text{for all }j\text{ and }1 \leq m \leq M_j.
\end{align*}
\end{lemma}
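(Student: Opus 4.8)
\emph{Strategy.} The plan is to differentiate $a_{j,k}^\pm(t)=\la\alpha_{j,k}^\pm(t),\bs u(t)\ra$ and compare the multi-potential dynamics with the ``frozen'' single-potential dynamics near the $j$-th centre, for which Lemma~\ref{lem:kg-one-comp} provides \emph{exact} identities. First I would reduce to solutions $\bs u(t)$ with smooth compactly supported initial data, so that all pairings below are classical; the resulting bound then extends to arbitrary $\bs u(t)\in H^1\times L^2$ by density, since its two sides depend continuously on $\bs u(t)$ once the Laplacian in $JH(t)$ has been moved, by integration by parts, onto the smooth exponentially decaying functions $\alpha_{j,k}^\pm(t)$. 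Next I would split $H(t)=H_j(t)+W_j(t)$, where $H_j(t)$ is the operator of the form \eqref{eq:Hbeta-def} built from the single potential $(V_j)_{\beta_j(t)}(\cdot-y_j(t))$, and $W_j(t)$ is the operator whose only non-zero entry is $\sum_{l\neq j}(V_l)_{\beta_l(t)}(\cdot-y_l(t))$ in the upper-left corner. Differentiating, using $y_j'(t)=\beta_j(t)$, the chain rule in the $\beta$ variable, and $\partial_t\bs u=JH(t)\bs u$, one writes $\dd t a_{j,k}^\pm(t)$ as the sum of an \emph{acceleration term} $\beta_j'(t)\cdot\la(\partial_\beta\alpha^\pm_{k,\beta})|_{\beta_j(t)}(\cdot-y_j(t)),\bs u(t)\ra$, a \emph{frozen term}
\begin{equation*}
-\la\beta_j(t)\cdot\grad\alpha_{k,\beta_j(t)}^\pm(\cdot-y_j(t)),\bs u(t)\ra+\la\alpha_{k,\beta_j(t)}^\pm(\cdot-y_j(t)),JH_j(t)\bs u(t)\ra,
\end{equation*}
and an \emph{interaction term} $\la\alpha_{k,\beta_j(t)}^\pm(\cdot-y_j(t)),JW_j(t)\bs u(t)\ra$.

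\emph{The frozen term is the main contribution.} I claim it equals exactly $\pm\tfrac{\nu_{j,k}}{\gamma_j}a_{j,k}^\pm(t)$. Indeed, for the single-potential equation \eqref{eq:kg-one-ham} with $V=V_j$, velocity $\beta=\beta_j(t)$ and offset $\xi=y_j(t)$, expanding the left-hand side of \eqref{eq:covector-2} (respectively \eqref{eq:covector-1}) by the chain rule yields precisely the frozen term above, because the operator in \eqref{eq:Hbeta-def} with these parameters \emph{is} $H_j(t)$ and $\alpha_{k,\beta}^\pm(\cdot-\xi)=\alpha_{j,k}^\pm(t)$; the identity then reads ``frozen term $=\pm\tfrac{\nu_k}{\gamma}\la\alpha_{k,\beta}^\pm(\cdot-\xi),\bs u\ra$''. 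Since \eqref{eq:covector-1}--\eqref{eq:covector-2} hold for \emph{every} solution of \eqref{eq:kg-one-ham}, and every $\bs u_0\in H^1\times L^2$ is the value at time $t$ of such a solution, the underlying $\bs u_0$-linear identity holds for all $\bs u_0$, hence in particular for our multi-potential solution $\bs u(t)$. For the kernel directions the same argument based on \eqref{eq:covector-3} gives that the frozen term vanishes, which explains the absence of a main term for $a_{j,m}^0$.

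\emph{The error terms, and the obstacle.} The acceleration term is bounded by $|\beta_j'(t)|\,\|\partial_\beta\alpha^\pm_{k,\beta}\|_{L^2}\,\|\bs u(t)\|_{L^2}\lesssim\eta\,\|\bs u(t)\|_{H^1\times L^2}$, using $|\beta_j'(t)|\le\eta$ from \eqref{eq:kg-param-hyp} together with \eqref{eq:beta-derivative} (the analogous estimate for $\partial_\beta\alpha^0_{m,\beta}$ handles the kernel directions). For the interaction term, $JW_j(t)\bs u(t)$ pairs $\alpha_{k,\beta_j(t)}^\pm(\cdot-y_j(t))$ against $\big(\sum_{l\neq j}(V_l)_{\beta_l(t)}(\cdot-y_l(t))\big)u_0(t)$; since $\alpha^\pm_{k,\beta}$ is smooth and exponentially decaying \emph{uniformly} in $|\beta|\le v<1$ and centred at $y_j(t)$ (this uses Lemma~\ref{lem:eigen-decay} and $|\Lambda_\beta x|\ge\gamma|x|$, as recorded after \eqref{eq:beta-derivative}), while each $(V_l)_{\beta_l(t)}(\cdot-y_l(t))$ is exponentially decaying and centred at $y_l(t)$ with $|y_j(t)-y_l(t)|\ge 1/\eta$, a routine interaction estimate for products of exponentially localised profiles with far-separated centres gives that the interaction term is $\lesssim J\,\eee^{-\kappa/(2\eta)}\|\bs u(t)\|_{L^2}$ for some $\kappa>0$ depending only on $v$ and the potentials. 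Collecting the three contributions, given $c>0$ one chooses $\eta$ small enough that the acceleration and interaction parts together are $\le c\|\bs u(t)\|_{H^1\times L^2}$, which is the assertion. The only delicate step is the ``frozen'' identity: extracting from the genuinely time-dependent statements \eqref{eq:covector-1}--\eqref{eq:covector-3} the pointwise-in-$t$, $\bs u$-bilinear identity used above; the uniform-in-$\beta$ exponential decay and the ensuing interaction estimate are standard but must be tracked with care.
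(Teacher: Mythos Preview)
Your proof is correct and follows essentially the same approach as the paper: both differentiate $a_{j,k}^\pm(t)$, split off an acceleration term bounded via \eqref{eq:beta-derivative} and $|\beta_j'(t)|\le\eta$, isolate a ``frozen'' single-potential contribution identified exactly by Lemma~\ref{lem:kg-one-comp} (equations \eqref{eq:covector-1}--\eqref{eq:covector-3}), and bound the cross-potential interaction using exponential localisation at far-separated centres. Your write-up is in fact somewhat more explicit than the paper's about the density reduction and the mechanism behind the frozen identity.
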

\begin{proof}
We prove the first bound for the sign ``$-$'', the remaining cases being similar. Fix $t_0$ and let $\beta := \beta_j(t_0)$,
$\xi := y_j(t_0)$. Let $\alpha_{k, \beta}^-$ be defined by \eqref{eq:kg-am-def}
and let $H_\beta$ be defined by \eqref{eq:Hbeta-def} with $V = V_j$. Then \eqref{eq:beta-derivative} yields
\begin{equation*}
\Big\la \dd t\alpha_{j, k}^-(t_0), \bs u(t_0)\Big\ra = \Big\la \dd t\Big\vert_{t=0}\alpha_{k, \beta}^-(\cdot - \beta t - \xi), \bs u(t_0)\Big\ra + O(\eta \|\bs u(t_0)\|_{L^2}).
\end{equation*}
We also have
\begin{equation*}
\la \alpha_{j, k}^-(t_0), \partial_t \bs u(t_0)\ra = \la \alpha_{k, \beta}^-(\cdot - \xi), JH_\beta(\xi) \bs u(t_0)\ra
+ \sum_{j' \neq j}\big\la \alpha_{k, \beta}^-(\cdot - \xi), (V_{j'})_{\beta_{j'}(t_0)}(\cdot - y_{j'}(t_0))\bs u(t_0)\big\ra.
\end{equation*}
If \eqref{eq:kg-param-hyp} holds with $\eta \ll 1$, then the second term above is $\ll \|\bs u(t_0)\|_{L^2}$ when $t_0 \gg 1$
(similarly as in the proof of Proposition~\ref{prop:heat}).
We thus obtain
\begin{equation*}
\dd t a_{j, k}^-(t_0) = \Big\la \dd t\Big\vert_{t=0}\alpha_{k, \beta}^-(\cdot - \beta t - \xi), \bs u(t_0)\Big\ra + \la \alpha_{k, \beta}^-(\cdot - \xi), JH_\beta(\xi) \bs u(t_0)\ra + o(\|\bs u(t_0)\|_{L^2}),
\end{equation*}
and the conclusion follows from \eqref{eq:covector-1}.
\end{proof}
\begin{proof}[Proof of Proposition~\ref{prop:kg-main}]
We set
\begin{align*}
I_t^+(\bs u(t)) &:= \bigg(\sum_{j=1}^J \sum_{k=1}^{K_j} |a_{j, k}^+(t)|^2\bigg)^{\frac 12}, \\
I_t^-(\bs u(t)) &:= \bigg(\max\Big(0, Q(t; \bs u(t), \bs u(t)) + \sum_{j=1}^J \sum_{k=1}^{K_j} |a_{j, k}^-(t)|^2 +  \sum_{j=1}^J \sum_{m=1}^{M_j} |a_{j, m}^0(t)|^2\Big)\bigg)^\frac 12.
\end{align*}
We need to verify the assumptions of Theorem~\ref{thm:main-cont-2}, with $\epsilon$ instead of $\lambda$ and $\nu - \epsilon$ instead of $\mu$.
As in the case of the heat equation, this boils down to showing that
\begin{align}
\dd t I_t^+(\bs u(t)) &\geq \nu\sqrt{1-v^2} I_t^+(\bs u(t)) - \wt\epsilon\|\bs u(t)\|_{H^1\times L^2}\qquad\text{if }I_t^+(\bs u(t)) \geq c_4 I_t^-(\bs u(t)),\label{eq:kg-dtIp} \\
\dd t I_t^-(\bs u(t)) &\leq \wt\epsilon\|\bs u(t)\|_{H^1 \times L^2}\qquad \text{if }I_t^+(\bs u(t)) \leq c_3 I_t^-(\bs u(t)), \label{eq:kg-dtIm}
\end{align}
where $\wt \epsilon \to 0$ when $\eta \to 0$.
Inequality \eqref{eq:kg-dtIp} follows from Lemma~\ref{lem:kg-unstable}.
When proving \eqref{eq:kg-dtIm}, we can assume that $I_t^-(\bs u(t)) > 0$, because otherwise $\bs u(t) = 0$.
From Lemma~\ref{lem:kg-unstable} we obtain
\begin{equation*}
\dd t\Big(\sum_{j=1}^J \sum_{k=1}^{K_j} |a_{j, k}^-(t)|^2 +  \sum_{j=1}^J \sum_{m=1}^{M_j} |a_{j, m}^0(t)|^2\Big) \leq \wt\epsilon\|\bs u(t)\|^2, \qquad \wt\epsilon \ll 1\text{ when }\eta \to 0,
\end{equation*}
so we are left with computing $\dd t Q(t; \bs u(t), \bs u(t))$. By density, we can assume the solution is smooth.

Fix $t_0$ and let $\beta_j := \beta_j(t_0)$, $\xi_j := y_j(t_0)$, $\bs u_j := \chi_j \bs u(t_0)$.
%Let $Q_{\beta_j}$ be defined by \eqref{eq:Hbeta}, with $(\beta, \xi, V)$ replaced by $(\beta_j, \xi_j, V_j)$.
%
%In what follows, ''$\simeq$'' means that the neglected terms are $\ll \|\bs u(t_0)\|_{H^1\times L^2}^2$.
%We will show that
%\begin{gather}
%\partial_t Q(t_0; \bs u(t_0), \bs u(t_0))\simeq \sum_{j=1}^J \beta_j\cdot \grad_{\xi_j}Q_{\beta_j}(\xi_j; \bs u_j, \bs u_j),\label{eq:partial-t-Q} \\
%Q(t_0; \bs u(t_0), JH(t_0)\bs u(t_0)) \simeq \sum_{j=1}^J Q_{\beta_j}(\xi_j; \bs u_j, JH_{\beta_j}(\xi_j)\bs u_j), \label{eq:Q-u-u}
%\end{gather}
%and invoking \eqref{eq:kg-energy-cons} will finish the proof.
%
%Set $\bs v := \big(1-\sum_{j=1}^J \chi_j\big)\bs u(t_0)$.
We have
\begin{equation}
\label{eq:partial-t-Q-2}
\begin{aligned}
\partial_t Q(t_0; \bs u(t_0), \bs u(t_0)) \simeq \frac 12\partial_t V(t_0)u(t_0)^2\ud x
%= \partial_{t=t_0} \Big(\sum_{j=1}^J (V_j)_{\beta_j(t)}(\cdot - y_j(t))\Big)\Big(v + \sum_{j=1}^J u_j\Big)^2 \\
&\simeq \frac 12\sum_{j=1}^J \partial_{t=t_0}\big((V_j)_{\beta_j(t)}(\cdot - y_j(t))\big)u(t_0)^2 \\
&\simeq \frac 12\sum_{j=1}^J\beta_j\cdot\grad\big((V_j)_{\beta_j}\big)(\cdot - \xi_j)u(t_0)^2,
\end{aligned}
\end{equation}
where the passage from the first to the second line is justified by the rapid decay of the potentials.

Next, we compute
\begin{equation*}
\begin{aligned}
2Q(t_0; \bs u(t_0), JH(t_0)\bs u(t_0)) &= \int_{\bR^d}\bigg( \dot u(t_0)\Big(\Delta u(t_0) - u(t_0) - \sum_{j=1}^J (V_j)_{\beta_j}(\cdot - \xi_j)u(t_0)\Big) \\
&+ \sum_{j=1}^J\chi_j(t_0)\Big(\Delta u(t_0) - u(t_0) - \sum_{l=1}^J (V_l)_{\beta_l}(\cdot - \xi_l)u(t_0)\Big)(\beta_j\cdot\grad u(t_0)) \\
&+ \sum_{j=1}^J\chi_j(t_0)\dot u(t_0)(\beta_j\cdot\grad \dot u(t_0)) + \grad u_0\cdot\grad\dot u(t_0) \\
&+ \Big(1+\sum_{j=1}^J (V_j)_{\beta_j}(\cdot - \xi_j)\Big)u(t_0)\dot u(t_0)\bigg)\ud x.
\end{aligned}
\end{equation*}
We integrate by parts and note that whenever the differentiation falls on the cut-off function, we obtain a negligible term.
We obtain
\begin{equation*}
2Q(t_0; \bs u(t_0), JH(t_0)\bs u(t_0)) \simeq -\sum_{j=1}^J\sum_{l=1}^J\chi_j(t_0)(V_l)_{\beta_l}(\cdot - \xi_l)u(t_0)\beta_j\cdot\grad u(t_0).
\end{equation*}
In this sum, the terms for which $l \neq j$ are negligible because of the fast decay of the potentials.
For the same reason, for $l = j$ we can neglect the cut-off function. We thus have
\begin{equation*}
2Q(t_0; \bs u(t_0), JH(t_0)\bs u(t_0)) \simeq -\sum_{j=1}^J(V_j)_{\beta_j}(\cdot - \xi_j)u(t_0)\beta_j\cdot\grad u(t_0).
\end{equation*}
Comparing with \eqref{eq:partial-t-Q-2}, we obtain
\begin{equation*}
\Big|\dd t\big\vert_{t = t_0}Q(t; \bs u(t), \bs u(t))\Big| \ll \|\bs u(t_0)\|_{H^1 \times L^2}^2.
\end{equation*}
\end{proof}

\bibliographystyle{plain}
\bibliography{researchbib}

\end{document}